\newcommand{\Sym}{\mathrm{Sym}}
\newcommand{\Sy}{S}
\newcommand{\Aut}{\textup{Aut}}
\newcommand{\Bbox}{\,\Box\,}
\newcommand{\aut}{\mathrm{Aut}\,}
\newcommand{\id}{\mathrm{id}}
\newcommand{\bcp}{\mathcal{P}\,}
\newcommand{\TFA}{\mathrm{TFA}}
\begin{document}

\newtheorem{theorem}{Theorem}[section]
\newtheorem{lemma}[theorem]{Lemma}
\newtheorem{corollary}[theorem]{Corollary}
\newtheorem{proposition}[theorem]{Proposition}
\newtheorem{conjecture}[theorem]{Conjecture}
\theoremstyle{definition}
\newtheorem{definition}[theorem]{Definition}
\newtheorem{problem}[theorem]{Problem}
\newtheorem{claim}[theorem]{Claim}
\newtheorem{question}[theorem]{Question}
\newtheorem{remark}[theorem]{Remark}

\date{}

\title[Existence of unexpected automorphisms in direct product graphs]{The existence of unexpected automorphisms in direct product graphs}

\author{Xiaomeng Wang}
\address{School of Mathematics and Statistics, Lanzhou University, Lanzhou, Gansu 730000, P.\,R. China}
\email{wangxiaomeng@lzu.edu.cn}

\author[Qin]{Yan-Li Qin\,*}
\address{School of Statistics\\Capital University of Economics and Business\\Beijing, 100070\\P.R. China}
\email{ylqin@cueb.edu.cn}

\author{Binzhou Xia}
\address{School of Mathematics and Statistics, The University of Melbourne, Parkville, VIC 3010, Australia}
\email{binzhoux@unimelb.edu.au}

\thanks{* Corresponding author}

\maketitle

\begin{abstract}

A pair of graphs $(\Gamma,\Sigma)$ is called unstable if their direct product $\Gamma\times\Sigma$ admits automorphisms not from $\mathrm{Aut}(\Gamma)\times\mathrm{Aut}(\Sigma)$, and such automorphisms are said to be unexpected. The stability of a graph $\Gamma$ refers to that of $(\Gamma,K_2)$. While the stability of individual graphs has been relatively well studied, much less is known for graph pairs. In this paper, we propose a conjecture that provides the best possible reduction of the stability of a graph pair to the stability of a single graph. We prove one direction of this conjecture and establish partial results for the converse. This enables the determination of the stability of a broad class of graph pairs, with complete results when one factor is a cycle.

\textit{Key words:} direct product, automorphism group, stability of graph pair.

\textit{MSC2020:} 05E18.
\end{abstract}

\section{Introduction}\label{sec1}

In this paper, all graphs are finite and undirected, without multiple edges but possibly with loops.
Abusing notation, we sometimes use $=$ instead of $\cong$ when there is no confusion, as we identify graphs or groups following the isomorphism.

Let $\Gamma$ and $\Sigma$ be graphs. The \emph{direct product} $\Gamma\times\Sigma$ is the graph with vertex set $V(\Gamma)\times V(\Sigma)$ in which $(u,i)$ is adjacent to $(v,j)$ if and only if $u$ is adjacent to $v$ in $\Gamma$ and $i$ is adjacent to $j$ in $\Sigma$. By definition, there is a natural embedding
\begin{equation}\label{eq:direct}
\Aut(\Gamma)\times\Aut(\Sigma)\leq\Aut(\Gamma\times\Sigma),
\end{equation}
where the product $\Aut(\Gamma)\times\Aut(\Sigma)$ acts componentwise. An automorphism $\tau\in\Aut(\Gamma\times\Sigma)$ is called \emph{unexpected} if $\tau\notin \Aut(\Gamma)\times\Aut(\Sigma)$. We say that the pair $(\Gamma,\Sigma)$ is \emph{unstable} if $\Gamma\times\Sigma$ admits an unexpected automorphism, and \emph{stable} otherwise. In other words, $(\Gamma,\Sigma)$ is stable if and only if \eqref{eq:direct} is an equality.

The concept of stability for graph pairs was introduced in~\cite{QXZZ2021} as a generalization of the earlier notion of stability for a single graph defined in~\cite{MSZ1989}.
Indeed, a graph $\Gamma$ is stable in the sense of~\cite{MSZ1989} if and only if the pair $(\Gamma,K_2)$ is stable.
The stability of graphs and graph pairs has since attracted considerable attention; see, for example,~\cite{FH2022,HMM2021,LMS2015,MSZ1989,MSZ1992,Morris2021,NS1996,QXZ2019,QXZ2021,Surowski2001,Surowski2003,Wilson2008} for graphs and~\cite{GLX2025,QXZ2024,QXZZ2021,WXZ2025} for graph pairs.

A graph is called \emph{twin-free} if no two vertices have the same neighborhood in the graph. We say that graphs $\Gamma$ and $\Sigma$ are \emph{coprime} (with respect to the direct product) if there exists no graph $\Delta$ of order greater than $1$ such that $\Gamma\cong\Gamma'\times\Delta$ and $\Sigma\cong\Sigma'\times\Delta$ for some graphs $\Gamma'$ and $\Sigma'$. If $\Gamma$ and $\Sigma$ are coprime connected non-bipartite twin-free graphs, then by a classic result~\cite[Theorem~8.18]{HIK2011}, the graph pair $(\Gamma,\Sigma)$ is stable. This together with some obvious necessary conditions given in~\cite{QXZZ2021} for a graph pair to be stable motivates the following definition of nontrivial graph pairs, so that we can focus on such graph pairs when considering their stability.

\begin{definition}\label{def:nontrivial}
A graph pair $(\Gamma,\Sigma)$ is \emph{nontrivial} if $\Gamma$ and $\Sigma$ are coprime connected twin-free graphs and exactly one of them is bipartite.
\end{definition}

A graph pair $(\Gamma,\Sigma)$ is said to be \emph{nontrivially unstable} if it is both nontrivial and unstable.
Explicitly determining which nontrivial pairs $(\Gamma,\Sigma)$ are stable is out of reach---even in the special case $\Sigma=K_2$~\cite{FH2022,HMM2021,QXZ2019,QXZ2021,Surowski2001,Wilson2008}.
Thus, the strongest results available are typically reductions of the stability problem for a nontrivial pair $(\Gamma,\Sigma)$, for instance with $\Sigma$ bipartite, to the stability of the single graph $\Gamma$.
As an example, the following question was raised in~\cite{QXZ2024}.

\begin{question}[{\cite[Question~19]{QXZ2024}}]\label{que}
Let $(\Gamma,\Sigma)$ be a nontrivial graph pair with $\Sigma$ bipartite. Under what condition does it hold that $(\Gamma, \Sigma)$ is stable if and only if $\Gamma$ is stable?
\end{question}

In this paper we formulate and investigate the following conjecture. If true, this conjecture represents the ultimate goal in the study of stability of graph pairs and shows that, surprisingly, the answer to Question~\ref{que} is ``no condition needed''.

\begin{conjecture}\label{conj}
Let $(\Gamma,\Sigma)$ be a nontrivial graph pair without loops, where  $\Sigma$ is bipartite. Then $(\Gamma,\Sigma)$ is stable if and only if $\Gamma$ is stable.
\end{conjecture}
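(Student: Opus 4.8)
The plan is to reformulate both sides of Conjecture~\ref{conj} in terms of two-fold automorphisms and then treat the two implications separately. First I would record the standard fact that, for a connected non-bipartite twin-free graph $\Gamma$, instability of $\Gamma$ (equivalently, of the pair $(\Gamma,K_2)$) is equivalent to the existence of a \emph{non-diagonal} two-fold automorphism: a pair $(\sigma,\rho)$ of permutations of $V(\Gamma)$ with $\sigma\neq\rho$ such that $u$ is adjacent to $v$ if and only if $\sigma(u)$ is adjacent to $\rho(v)$. Indeed, writing $V(\Gamma\times K_2)=V(\Gamma)\times\{0,1\}$, the part-preserving automorphisms of the connected bipartite double cover $\Gamma\times K_2$ are exactly the two-fold automorphisms of $\Gamma$, and those lying in $\Aut(\Gamma)\times\Aut(K_2)$ are exactly the diagonal ones; the part-swapping case reduces to the part-preserving one by composing with the canonical $K_2$-swap. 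On the other side, since $\Gamma$ is non-bipartite and $\Sigma$ is connected bipartite, $\Gamma\times\Sigma$ is connected and bipartite, with the unique bipartition $\{V(\Gamma)\times A,\,V(\Gamma)\times B\}$ induced by the unique bipartition $(A,B)$ of $\Sigma$. Thus the conjecture becomes the assertion that $\Gamma$ admits a non-diagonal two-fold automorphism if and only if $\Gamma\times\Sigma$ admits an unexpected automorphism.

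For the direction I would prove in full, assume $\Gamma$ is unstable and fix a non-diagonal two-fold automorphism $(\sigma,\rho)$. Using the bipartition $(A,B)$ of $\Sigma$, define a permutation $\tau$ of $V(\Gamma)\times V(\Sigma)$ fixing the second coordinate by $\tau(u,i)=(\sigma(u),i)$ for $i\in A$ and $\tau(u,i)=(\rho(u),i)$ for $i\in B$. Every edge of $\Gamma\times\Sigma$ joins a vertex in $V(\Gamma)\times A$ to one in $V(\Gamma)\times B$, and the two-fold condition gives $u\sim v\Leftrightarrow\sigma(u)\sim\rho(v)$ together with its transpose $u\sim v\Leftrightarrow\rho(u)\sim\sigma(v)$; hence $\tau$ preserves adjacency and is an automorphism. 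Since $\tau$ fixes every $\Sigma$-coordinate while acting as $\sigma$ on the $A$-layers and as $\rho\neq\sigma$ on the $B$-layers (both parts being nonempty), $\tau$ cannot equal any componentwise map $(u,i)\mapsto(\alpha(u),\beta(i))$, so $\tau\notin\Aut(\Gamma)\times\Aut(\Sigma)$ and $(\Gamma,\Sigma)$ is unstable. This establishes that stability of $(\Gamma,\Sigma)$ implies stability of $\Gamma$.

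For the converse, the hard direction, I would start from an unexpected automorphism $\tau$ of $\Gamma\times\Sigma$ and attempt to manufacture a non-diagonal two-fold automorphism of $\Gamma$. Using the unique bipartition I would split into the cases where $\tau$ preserves or interchanges the two parts, and then try to show that, modulo an element of $\Aut(\Sigma)$ acting on the layers, $\tau$ acts within the $\Gamma$-fibres by permutations $\sigma,\rho$ satisfying the two-fold condition, with $\sigma\neq\rho$ precisely because $\tau$ is unexpected. The crux is to prove that $\tau$ cannot genuinely mix the two factors: the coprimality hypothesis in Definition~\ref{def:nontrivial} is exactly what should forbid $\Gamma$-information and $\Sigma$-information from being interchanged. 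I expect this to be the main obstacle, because the classical description of the automorphism group of a direct product in terms of its factors~\cite[Theorem~8.18]{HIK2011} requires the product to be non-bipartite, whereas here $\Gamma\times\Sigma$ is bipartite---precisely the regime in which that description fails and unexpected automorphisms can arise. Consequently a direct structural control of layer-mixing automorphisms is needed, which seems feasible only when $\Sigma$ is sufficiently rigid; when $\Sigma$ is a cycle its highly constrained automorphisms and neighbourhood structure should let one push the argument through completely, whereas for general bipartite $\Sigma$ only partial control is available, which accounts for the asymmetry between the two halves of the conjecture.
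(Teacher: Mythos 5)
The statement you set out to prove is labelled a \emph{conjecture} in the paper, and it is in fact open: the paper itself establishes only the ``only if'' direction (Theorem~\ref{thm3}\,\eqref{thm3a}) together with partial converses under extra hypotheses (Theorem~\ref{thm3}\,\eqref{thm3b} and~\eqref{thm3c}). Measured against that, your proposal lands in exactly the same state. Your proof of the ``only if'' direction is correct and is essentially the paper's own argument: since the pair is nontrivial and $\Sigma$ is bipartite, $\Gamma$ is connected and non-bipartite, so instability of $\Gamma$ yields a non-diagonal TF-automorphism $(\sigma,\rho)$ (Lemma~\ref{lem:TF}); applying $\sigma$ on the layers over one part of $\Sigma$ and $\rho$ on the layers over the other part preserves all edges (every edge of $\Gamma\times\Sigma$ crosses the bipartition, and the TF-condition handles such edges), and the resulting permutation is not componentwise because it fixes the $\Sigma$-coordinate while acting differently on the two kinds of layers. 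The paper packages this same construction by choosing $\alpha_i\in\{\alpha,\beta\}$ according to the part containing $v_i$ and citing~\cite[Lemma~2.6(a)]{QXZZ2021}; your direct verification is a legitimate substitute for that citation.

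The genuine gap is the converse, and your own text concedes it: ``try to show that \dots $\tau$ acts within the $\Gamma$-fibres'' is a plan, not an argument. Moreover, the mechanism you propose---that the direct-product coprimality in Definition~\ref{def:nontrivial} ``is exactly what should forbid'' layer mixing---is not known to be sufficient, and that insufficiency is precisely why the statement is a conjecture rather than a theorem. What actually governs layer mixing is not the direct-product factorization of $\Gamma$ and $\Sigma$ but the Cartesian-prime factorizations of the skeletons: any automorphism of $\Gamma\times\Sigma$ is an automorphism of $S(\Gamma)\Bbox S(\Sigma)$ (Lemma~\ref{le:lecsb}), and it can permute Cartesian factors of $S(\Gamma)$ with Cartesian factors of a component of $S(\Sigma)$ even when $\Gamma$ and $\Sigma$ are coprime with respect to the direct product. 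This is why the paper's partial converses impose hypotheses of a different nature---$S(\Gamma)$ Cartesian-coprime to the components of $S(\Sigma)$ plus a part-swapping automorphism of $\Sigma$ in~\eqref{thm3b}, or Cartesian-prime skeleton components, non-complete $B(\Sigma)$-components and a suitable automorphism of $S(\Sigma)$ in~\eqref{thm3c}---and then require substantial machinery (Lemma~\ref{lem:TFA}, Proposition~\ref{lem:mixer}, and the counting argument via Lemma~\ref{lem:TF-bipartite}). Your closing remark that the cycle case ``should let one push the argument through'' is consistent with the paper's Theorems~\ref{le:lepco} and~\ref{thm2}, but as written your proposal proves only half of the conjecture, which is all that the paper proves as well.
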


To approach Conjecture~\ref{conj}, a common strategy is to analyze the structure of graph pairs via the Cartesian product and related notions.
The \emph{Cartesian product} $\Gamma\Bbox\Sigma$ of graphs $\Gamma$ and $\Sigma$ is the graph with vertex set $V(\Gamma)\times V(\Sigma)$ in which $(u,i)$ is adjacent to $(v,j)$ if and only if either $u=v$ and $i$ is adjacent to $j$ in $\Sigma$, or $i=j$ and $u$ is adjacent to $v$ in $\Gamma$.
We say that graphs $\Gamma$ and $\Sigma$ are \emph{Cartesian-coprime} if there exists no graph $\Delta$ of order greater than $1$ such that $\Gamma\cong\Gamma'\Bbox\Delta$ and $\Sigma\cong\Sigma'\Bbox\Delta$ for some graphs $\Gamma'$ and $\Sigma'$.
A graph with order greater than $1$ is called \emph{Cartesian-prime} if it cannot be represented as a Cartesian product of two graphs with smaller orders.
Furthermore, when studying the stability of a graph pair $(\Gamma,\Sigma)$, the Cartesian skeletons (see Definition~\ref{def:skeleton}) $S(\Gamma)$ and $S(\Sigma)$ have played an important role~\cite{GLX2025,HIK2011,QXZ2019}.

We now state the main result of this paper, which provides evidence in support of Conjecture~\ref{conj}.

\begin{theorem}\label{thm3}
Let $(\Gamma,\Sigma)$ be a nontrivial graph pair such that $\Sigma$ is bipartite with bipartition $\{U,W\}$. Then the following statements hold:
\begin{enumerate}[\rm(a)]
\item\label{thm3a} If $(\Gamma,\Sigma)$ is stable, then $\Gamma$ is stable.
\item\label{thm3b} Suppose that $S(\Gamma)$ is Cartesian-coprime to each connected component of $S(\Sigma)$, and there exists an automorphism of $\Sigma$ interchanging $U$ and $W$. If $\Gamma$ is stable, then $(\Gamma,\Sigma)$ is stable.
\item\label{thm3c} Suppose that both $U$ and $W$ have size at least three, no connected component of $B(\Sigma)$ is complete, each connected component of $S(\Sigma)$ is Cartesian-prime, and $S(\Sigma)$ admits an automorphism fixing some vertex in $U$ but not any in $W$. If $\Gamma$ is stable, then $(\Gamma,\Sigma)$ is stable.
\end{enumerate}
\end{theorem}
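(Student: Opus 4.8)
The plan is to treat (a) by a direct construction and (b), (c) by the theory of Cartesian skeletons combined with the reduction of single-graph stability to two-fold automorphisms. Note first that, since exactly one of $\Gamma,\Sigma$ is bipartite and $\Sigma$ is bipartite, $\Gamma$ is connected and non-bipartite while $\Gamma\times\Sigma$ is connected and bipartite with the unique bipartition $\{V(\Gamma)\times U,\,V(\Gamma)\times W\}$; recall also that $\Gamma$ is unstable exactly when it has a \emph{two-fold automorphism} $(\sigma,\tau)$---a pair $\sigma,\tau\in\Sym(V(\Gamma))$ with $u\sim v\iff\sigma(u)\sim\tau(v)$ for all $u,v$---satisfying $\sigma\neq\tau$. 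To prove (a) I argue the contrapositive: given such $(\sigma,\tau)$, define $\phi$ on $V(\Gamma)\times V(\Sigma)$ by $\phi(u,i)=(\sigma(u),i)$ for $i\in U$ and $\phi(u,i)=(\tau(u),i)$ for $i\in W$. Every edge of $\Gamma\times\Sigma$ may be written $(u,i)(v,j)$ with $i\in U$, $j\in W$, and the two-fold property gives $u\sim v\iff\sigma(u)\sim\tau(v)$, so $\phi\in\Aut(\Gamma\times\Sigma)$. As $\phi$ fixes every $\Sigma$-coordinate but acts by $\sigma$ over $U$ and by $\tau\neq\sigma$ over $W$, it cannot lie in $\Aut(\Gamma)\times\Aut(\Sigma)$, whence $(\Gamma,\Sigma)$ is unstable.

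For the converse directions the common strategy is as follows. Let $\tau\in\Aut(\Gamma\times\Sigma)$. Viewed as a permutation of vertices, $\tau$ is an automorphism of the Cartesian skeleton, and by the skeleton decomposition of a direct product \cite{HIK2011} one has $S(\Gamma\times\Sigma)=S(\Gamma)\Bbox S(\Sigma)$. Here $S(\Gamma)$ is connected, while $S(\Sigma)$ carries no edge between $U$ and $W$, so $S(\Sigma)$ is the disjoint union of its restrictions to the two parts and the connected components of $S(\Gamma\times\Sigma)$ are exactly the graphs $S(\Gamma)\Bbox C$, with $C$ ranging over the components of $S(\Sigma)$. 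The decisive step is to show that $\tau$ \emph{preserves the $\Gamma$-fibres}, equivalently that it carries $S(\Gamma)$-layers to $S(\Gamma)$-layers; granting this, the $\Sigma$-coordinate of $\tau(u,i)$ depends only on $i$, say it equals $\pi(i)$, and $\tau(u,i)=(\sigma_i(u),\pi(i))$ for bijections $\sigma_i$ of $V(\Gamma)$. Because $\Gamma$ has an edge, for adjacent $i\in U$ and $j\in W$ we must have $\pi(i)\sim\pi(j)$ (otherwise $\tau$ would send an edge to a non-edge), and the residual equivalence $u\sim v\iff\sigma_i(u)\sim\sigma_j(v)$ exhibits $(\sigma_i,\sigma_j)$ as a two-fold automorphism of $\Gamma$; applying the same reasoning to $\tau^{-1}$ shows $\pi\in\Aut(\Sigma)$. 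Stability of $\Gamma$ now forces $\sigma_i=\sigma_j\in\Aut(\Gamma)$ whenever $i\sim j$, and since $\Sigma$ is connected all $\sigma_i$ coincide with a single $\sigma\in\Aut(\Gamma)$. Hence $\tau=\sigma\times\pi\in\Aut(\Gamma)\times\Aut(\Sigma)$, as required.

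It remains to secure fibre-preservation and to control the treatment of the two parts, and here (b) and (c) diverge. In (b), Cartesian-coprimality of $S(\Gamma)$ with each component $C$ of $S(\Sigma)$ ensures, through the uniqueness of Cartesian prime factorization, that in any isomorphism $S(\Gamma)\Bbox C\to S(\Gamma)\Bbox C'$ induced between components by $\tau$ no prime factor of $S(\Gamma)$ can be matched with a factor of $C'$; thus the $S(\Gamma)$-layers are preserved, which is exactly fibre-preservation. The hypothesis that some $\rho\in\Aut(\Sigma)$ interchanges $U$ and $W$ then absorbs the part-exchanging case: if $\tau$ swaps the two parts, replacing it by $(\id_\Gamma\times\rho)\,\tau$, which differs from $\tau$ by an element of $\Aut(\Gamma)\times\Aut(\Sigma)$, returns to the part-preserving case already treated.

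In (c), coprimality is dropped, so a component $C$ of $S(\Sigma)$ might a priori be interchangeable with an isomorphic prime factor of $S(\Gamma)$. The remaining hypotheses are designed to forbid this: $|U|,|W|\geq 3$ and the absence of complete components of $B(\Sigma)$ keep the skeleton non-degenerate and faithful to the adjacencies of $\Sigma$, the Cartesian-primality of each $C$ makes it an indecomposable block that $\tau$ must send to such a block, and the automorphism of $S(\Sigma)$ fixing a vertex of $U$ but none of $W$ breaks the symmetry between the two parts so that they cannot be exchanged and $\tau$ is forced to be part-preserving. I expect fibre-preservation to be the main obstacle: since automorphisms of a Cartesian product may permute isomorphic prime factors, one must rule out transposing a factor of $S(\Gamma)$ with a component of $S(\Sigma)$, and achieving this without coprimality---using only primality, the size and non-completeness conditions, and the asymmetry of the parts---is the most delicate step. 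A further technical point is to confirm that the skeleton decomposition and its component structure behave as claimed in this bipartite situation, for which the twin-freeness of $\Sigma$ and the non-completeness of the components of $B(\Sigma)$ are essential.
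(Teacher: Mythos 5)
Your part~(a) is correct and is essentially the paper's own proof: both argue the contrapositive by spreading a non-diagonal TF-automorphism of $\Gamma$ over the two parts of $\Sigma$; the paper closes by citing \cite[Lemma~2.6(a)]{QXZZ2021}, while you verify the edge condition directly, which is fine.

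Your part~(b) is also correct, but it takes a genuinely different route from the paper. You analyze an arbitrary $\tau\in\Aut(\Gamma\times\Sigma)$ directly: reduce the part-swapping case by composing with $\id\times\rho$, then use Cartesian-coprimality and unique Cartesian-prime factorization to get $\Aut(S(\Gamma)\Bbox S_{\pm}(\Sigma))=\Aut(S(\Gamma))\times\Aut(S_{\pm}(\Sigma))$, so $\tau$ acts coordinatewise on each part; the TF-automorphism argument plus stability of $\Gamma$ then identifies the two $\Gamma$-coordinates as a single automorphism of $\Gamma$ and the $\Sigma$-coordinate as an automorphism of $\Sigma$. The paper instead proves a decomposition lemma for TF-automorphisms of $\Gamma\times\Sigma$ (Lemma~\ref{lem:TFA}, which rests on the same coprimality input) and runs a counting argument: $|\TFA(\Gamma\times\Sigma)|=|\Aut(\Gamma)|^2|\TFA(\Sigma)|$, converted into $|\Aut(\Gamma\times\Sigma)|=|\Aut(\Gamma)||\Aut(\Sigma)|$ via the relation between $|\TFA(\cdot)|$ and $|\Aut(\cdot\times K_2)|$ for bipartite graphs (Lemma~\ref{lem:TF-bipartite}). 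Your direct structural argument avoids the counting and the $K_2$-cover bookkeeping altogether and, as far as I can check, is sound; what it buys is simplicity, while the paper's route produces the TFA machinery it develops in Section~\ref{sec5}.

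Part~(c) is where your proposal has a genuine gap---in fact two. First, your claim that the automorphism of $S(\Sigma)$ fixing a vertex of $U$ but none of $W$ ``breaks the symmetry between the two parts so that they cannot be exchanged and $\tau$ is forced to be part-preserving'' is false. Take $\Sigma=C_{12}$: all hypotheses of (c) hold ($\delta\colon i\mapsto -i$ fixes $0,6\in U$ and no odd vertex; both parts have size $6$; the components of $B(\Sigma)$ and of $S(\Sigma)$ are copies of $C_6$, which are Cartesian-prime and not complete), and yet $i\mapsto i+1$ is an automorphism of $\Sigma$ interchanging $U$ and $W$, so $\id\times(i\mapsto i+1)$ is an automorphism of $\Gamma\times\Sigma$ swapping $V(\Gamma)\times U$ and $V(\Gamma)\times W$. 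Part-exchanging automorphisms therefore cannot be excluded under (c); they must be handled, and unlike in (b) there is no guaranteed part-swapping element of $\Aut(\Sigma)$ to compose with. The paper handles this (Lemma~\ref{claim1}) by composing with a part-swapping automorphism of $S(\Sigma)$, which need not come from $\Aut(\Sigma)$, at the cost of subsequently working with automorphisms of the skeleton rather than of $\Gamma\times\Sigma$.

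Second, and more fundamentally, ``securing fibre-preservation'' is not a viable strategy for (c), because coprimality is exactly the hypothesis that has been dropped: under (c), $S(\Gamma)$ may well contain Cartesian factors isomorphic to $S_+(\Sigma)$, and then $\Aut(S(\Gamma)\Bbox S_+(\Sigma))$ genuinely contains fibre-mixing automorphisms; no combination of primality, size, and non-completeness conditions removes them from the skeleton. The entire content of (c) is to show that such a skeleton automorphism cannot arise from an automorphism of $\Gamma\times\Sigma$, and that is where the paper's work lies: assuming an unexpected automorphism $\tau$ exists, it shows (Proposition~\ref{lem:mixer}, which uses $|U|,|W|\geq3$, the non-complete components of $B(\Sigma)$, primality, and the stability of $\Gamma$) that the maps induced on both skeleton components must then be genuine $U$- and $W$-mixers; it extracts the explicit decompositions $S(\Gamma)=\Gamma_+\Bbox S_+(\Sigma)^{\Box,c-1}=\Gamma_-\Bbox S_-(\Sigma)^{\Box,d-1}$ with wreath-product coordinates; it derives from the stability of $\Gamma$ a diagonal identity for $\tau$ (Claim~\ref{claim5}); and only then does it use the asymmetry automorphism $\delta$, composed into $\tau$, to contradict that identity. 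None of this appears in your proposal: what you label ``the most delicate step'' and leave open is precisely the proof of (c).
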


From Theorem~\ref{thm3}\,\eqref{thm3a} we already know that the ``only if'' part of Conjecture~\ref{conj} holds. Moreover, statements~\eqref{thm3b} and~\eqref{thm3c} show that, under certain conditions, the ``if'' part of Conjecture~\ref{conj} is also valid. Although these conditions may appear somewhat technical, they are sufficiently general to determine the stability of many graph pairs, particularly when one of the graphs is ``simple''. For example, in this paper we use Theorem~\ref{thm3} to determine the stability of graph pairs involving a cycle.

Before presenting our result, let us recall that the only previously published support for the ``if'' part of Conjecture~\ref{conj} is given in~\cite[Theorem~1.6]{GLX2025}, due to Gan, Liu and the third author. Their theorem is too technical to state here, but as a byproduct they proved that there is no nontrivially unstable graph pair involving a complete graph, thereby solving a problem from~\cite{DMS2019} and confirming a conjecture from~\cite{QXZ2024}. They also obtained the following result on the stability of graph pairs involving an odd cycle.

\begin{theorem}[{\cite[Theorem~1.8]{GLX2025}}]\label{thm1}
There exists no nontrivially unstable graph pair $(\Gamma,C_n)$ for odd $n$.
\end{theorem}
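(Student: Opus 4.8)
The plan is to fix a nontrivial pair $(\Gamma,C_n)$ with $n$ odd and show directly that $G:=\Gamma\times C_n$ admits no unexpected automorphism. Since $C_n$ is non-bipartite for odd $n$, nontriviality forces $\Gamma$ to be bipartite, connected, twin-free and coprime to $C_n$; write $\{A,B\}$ for its bipartition and identify $V(C_n)$ with $\mathbb{Z}_n$. As every edge of $\Gamma$ joins $A$ to $B$, the graph $G$ is connected and bipartite with the inherited bipartition $\{A\times\mathbb{Z}_n,\,B\times\mathbb{Z}_n\}$, which is therefore its unique bipartition; hence each $\tau\in\Aut(G)$ either preserves both parts or interchanges them. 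I note for consistency that $C_n$ itself is stable: since $C_n\times K_2\cong C_{2n}$ for odd $n$, we have $|\Aut(C_n\times K_2)|=4n=|\Aut(C_n)\times\Aut(K_2)|$, so \eqref{eq:direct} is an equality, and Theorem~\ref{thm3}\,\eqref{thm3a} makes the ``only if'' half vacuous here. The whole content is thus the converse, that $(\Gamma,C_n)$ is \emph{always} stable; this cannot be extracted from Theorem~\ref{thm3}\,\eqref{thm3b}--\eqref{thm3c} directly, because there the bipartite factor is $\Gamma$ itself, on which we may impose no hypotheses, so a dedicated argument is needed.

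The core structural step is to show that every part-preserving $\tau\in\Aut(G)$ has the ``two-fold'' form
\[
\tau(v,i)=\bigl(\alpha(v),\ \beta_{\epsilon(v)}(i)\bigr),\qquad \alpha\in\Aut(\Gamma),\ \beta_A,\beta_B\in\Aut(C_n),
\]
where $\epsilon(v)\in\{A,B\}$ records the part of $v$; that is, $\Gamma$ receives a single automorphism while the bipartition of the bipartite factor indexes a two-fold action on the cycle. The natural route is the Cartesian skeleton (Definition~\ref{def:skeleton}): adjacency in $S(G)$ is read off from the common-neighborhood order of $G$, so $\Aut(G)\le\Aut(S(G))$, and $S$ carries direct products to Cartesian products, giving $S(G)\cong S(\Gamma)\Bbox S(C_n)$. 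Here $S(\Gamma)$ splits into exactly two connected components, carried by $A$ and by $B$ (the bipartite case), while $S(C_n)$ is connected and Cartesian-prime. Transferring the direct-coprimality of $\Gamma$ and $C_n$ to Cartesian-coprimality of the skeletons (via~\cite{HIK2011}) and invoking uniqueness of Cartesian prime factorization forces any automorphism of $S(G)$ to permute $C_n$-fibres among themselves and the two $\Gamma$-components among themselves, which is exactly the displayed form; the non-bipartiteness of $C_n$ is what prevents a symmetric two-fold action on the $\Gamma$-coordinate, pinning the dependence on the parts to the cycle factor.

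The crux is then to collapse this form, and here the oddness of $n$ is decisive. Requiring $\tau$ to preserve the edges $(a,i)\sim(b,j)$ of $G$ (with $a\in A$, $b\in B$, $ab\in E(\Gamma)$, $i\sim j$ in $C_n$) and using $\alpha\in\Aut(\Gamma)$, the binding constraint is $\beta_A(i)\sim\beta_B(j)$ whenever $i\sim j$. Writing $\beta_B\beta_A^{-1}$ as a rotation by $d$ (the reflection and mixed cases are handled by the same computation and are ruled out for parity reasons), the two edge-orientations $j=i\pm1$ force $d\equiv0$ or $d\equiv2$ and simultaneously $d\equiv0$ or $d\equiv-2\pmod n$; a nonzero solution would require $4\equiv0\pmod n$, impossible for odd $n$. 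Hence $d\equiv0$, so $\beta_A=\beta_B=:\beta$ and $\tau=(\alpha,\beta)\in\Aut(\Gamma)\times\Aut(C_n)$ is expected. When $\tau$ interchanges the two parts, the same analysis applies after composing with a fixed part-swapping automorphism of $G$, which exists only when $|A|=|B|$ and $\Gamma$ carries such a symmetry. This parity arithmetic is exactly where oddness cannot be dropped: for even cycles $C_{2k}$ one instead has $C_{2k}$ as the bipartite factor, a genuinely different regime governed by the extra hypotheses of Theorem~\ref{thm3}.

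I expect the main obstacle to lie in the structural step of the second paragraph: establishing the two-fold form for an \emph{arbitrary} bipartite $\Gamma$, with no extra hypotheses. The skeleton identity $S(G)\cong S(\Gamma)\Bbox S(C_n)$ together with functoriality is delicate precisely when $G$ is bipartite (so $S(G)$ is disconnected) and when the components of $S(\Gamma)$ may themselves factor as Cartesian products or occur with multiplicity; controlling twins, passing to the thin quotient, transferring direct-coprimality to the skeleton, and analysing how an automorphism may permute the two skeleton components form the technical heart of the argument. Once the two-fold form is secured, the oddness computation that closes the proof is short, and it is the one place where the hypothesis that $n$ is odd is genuinely used.
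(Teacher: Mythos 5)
Your outline founders at exactly the point you yourself flag as the ``technical heart'': the passage from direct-coprimality of $\Gamma$ and $C_n$ to Cartesian-coprimality of the skeleton components of $S(\Gamma)$ with $S(C_n)$. No such transfer principle exists in~\cite{HIK2011} or anywhere else, and it cannot exist, because the paper's actual proof establishes the opposite flow of implication: assuming an unexpected (mixing) automorphism, Proposition~\ref{lem:mixer} and Lemma~\ref{thm:cartesian} force $S_+(\Gamma)=\Gamma_+\Box C_n^{\Box,c-1}$ and $S_-(\Gamma)=\Gamma_-\Box C_n^{\Box,d-1}$ with $c,d\geq2$, i.e.\ in the unstable case the skeleton components are \emph{not} Cartesian-coprime to $S(C_n)$, even though $\Gamma$ is assumed direct-coprime to $C_n$. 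The entire difficulty of the theorem is then to convert this Cartesian factorization of the skeleton, together with the wreath-product data of the mixer (conditions (i)--(iv)), back into a \emph{direct}-product factorization $\Gamma\cong\Delta\times C_n$, which is what finally contradicts coprimality. The paper does this by constructing the partitions $L_1,\ldots,L_n$ and $\mathcal{P}$ from the automorphism and verifying the three conditions of Lemma~\ref{le:ledc} (Claims~\ref{claim2}--\ref{claim4}); this bridge from skeleton structure to direct-product structure is precisely the step where the published proof in~\cite{GLX2025} had its error (Subsection~\ref{sec4}), so treating it as an automatic consequence of ``uniqueness of Cartesian prime factorization'' is not a compressible detail but the missing proof. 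With that step absent, your ``two-fold form'' for part-preserving automorphisms is unproven, and the parity computation in your third paragraph (which is fine in itself, and amounts to the stability of $C_n$ plus Lemma~\ref{lem:TF}) has nothing to rest on.

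A secondary but genuine problem is your treatment of part-swapping automorphisms: you propose to compose with ``a fixed part-swapping automorphism of $G$'', but an automorphism of $G=\Gamma\times C_n$ swapping the two parts and lying in product form need not exist at all, and its existence is not implied by the existence of a part-swapping $\tau$. The paper avoids this by working at skeleton level: Lemma~\ref{claim1} composes $\tau$ with $(\gamma,\mathrm{id})$ where $\gamma$ is an automorphism of $S(\Gamma)$ (not of $\Gamma$) interchanging the two skeleton components, whose existence is guaranteed by Lemma~\ref{thm:cartesian}\,\eqref{thm:cartesiand} since the two components of $S(\Gamma)\Bbox S(C_n)$ are isomorphic whenever such a $\tau$ exists. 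The resulting composite is only an automorphism of $S(\Pi)$, not of $\Pi$, and the subsequent analysis is carried out entirely inside $\Aut(S(\Pi))$ --- another structural feature your proposal's reduction does not accommodate.
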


After the publication of~\cite{GLX2025}, we identified a gap in the proof of Theorem~\ref{thm1} (see Subsection~\ref{sec4} for details). In addition, it is natural to consider the stability of graph pairs involving even cycles. For instance, the following question has been raised in the literature.

\begin{question}[{\cite[Question~18]{QXZ2024}}]\label{que}
For a stable graph $\Gamma$ and an even integer $n\geq 6$, under what condition is $(\Gamma, C_n)$ nontrivially unstable?
\end{question}

As an application of Theorem~\ref{thm3}, we establish Theorem~\ref{le:lepco} below to answer Question~\ref{que}, and we strengthen Theorem~\ref{thm1} to the following Theorem~\ref{thm2} (which, in particular, furnishes a correct proof of Theorem~\ref{thm1}).
Note that a cycle $C_n$ is twin-free if and only if $n\neq4$.

\begin{theorem}\label{le:lepco}
Let $\Gamma$ be a graph, and let $n\geq6$ an even integer. Then $(\Gamma,C_n)$ is nontrivially unstable if and only if $\Gamma$ is nontrivially unstable and coprime to $C_n$.
\end{theorem}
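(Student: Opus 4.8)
The plan is to prove the two directions separately, treating the verification of nontriviality as routine bookkeeping and reserving the real work for reducing the instability of the pair to the instability of $\Gamma$. Throughout, write $\{U,W\}$ for the bipartition of $C_n$ into vertices at even and odd distance from a base vertex, so that $|U|=|W|=n/2\geq 3$, and note that $C_n$ is connected, bipartite and twin-free (as $n\neq 4$), while rotation by one step is an automorphism of $C_n$ interchanging $U$ and $W$. For the ``if'' direction, suppose $\Gamma$ is nontrivially unstable and coprime to $C_n$. Then $\Gamma$ is connected, twin-free, non-bipartite and unstable, so $(\Gamma,C_n)$ meets every requirement of Definition~\ref{def:nontrivial} (coprimality is assumed, exactly $C_n$ is bipartite, and both factors are connected and twin-free); hence $(\Gamma,C_n)$ is nontrivial. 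Instability is then immediate from the contrapositive of Theorem~\ref{thm3}\,\eqref{thm3a}: since $\Gamma$ is unstable, the pair cannot be stable. Thus $(\Gamma,C_n)$ is nontrivially unstable.

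For the ``only if'' direction, suppose $(\Gamma,C_n)$ is nontrivially unstable. Nontriviality already delivers that $\Gamma$ is connected, twin-free, non-bipartite and coprime to $C_n$; since a connected non-bipartite graph is automatically coprime to $K_2$, the pair $(\Gamma,K_2)$ is nontrivial. So it remains only to show that $\Gamma$ is unstable, for then $\Gamma$ is nontrivially unstable as a single graph and we are done. I would obtain this from the contrapositive of Theorem~\ref{thm3}\,\eqref{thm3b} or~\eqref{thm3c}: if, for $\Sigma=C_n$, one can verify the hypotheses of~\eqref{thm3b} or of~\eqref{thm3c}, then stability of $\Gamma$ forces stability of $(\Gamma,C_n)$, whence instability of the pair forces $\Gamma$ to be unstable.

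Thus the heart of the argument is to check the hypotheses of~\eqref{thm3b} or~\eqref{thm3c} for cycles. The first step is to compute the Cartesian skeleton: stepping by two around $C_n$ shows that $S(C_n)$ is the disjoint union $2C_{n/2}$ of two $(n/2)$-cycles, one supported on $U$ and one on $W$. The swapping automorphism needed in~\eqref{thm3b} is the rotation above, and for~\eqref{thm3c} one takes the automorphism of $S(C_n)$ that is the identity on the $U$-cycle and a fixed-point-free rotation on the $W$-cycle, which fixes vertices of $U$ but none of $W$. I would then split on $n$. When $n\geq 10$ each component $C_{n/2}$ has length at least five, so it is Cartesian-prime and not complete, and part~\eqref{thm3c} applies directly. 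The exceptional cases are $n=6$, where $S(C_6)=2K_3$ has complete components (so the ``no component of $B(\Sigma)$ complete'' hypothesis of~\eqref{thm3c} fails), and $n=8$, where $S(C_8)=2C_4$ and $C_4=K_2\Bbox K_2$ is not Cartesian-prime (so~\eqref{thm3c} again fails). For these I would instead invoke part~\eqref{thm3b}, whose only nontrivial hypothesis is that $S(\Gamma)$ be Cartesian-coprime to the component $C_{n/2}$.

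The main obstacle is establishing exactly this Cartesian-coprimality in the exceptional cases, and this is where the coprimality of $\Gamma$ and $C_n$ must enter. The key tool is the multiplicativity of the skeleton, $S(\Gamma\times C_n)\cong S(\Gamma)\Bbox S(C_n)$, which converts a shared direct factor into a shared Cartesian factor of the skeletons. Concretely, I would argue by contraposition: if $S(\Gamma)$ and $C_{n/2}$ shared a Cartesian-prime common factor, then $\Gamma$ would admit a direct factor whose skeleton is that factor, and combined with the identities $C_6\cong K_3\times K_2$ and, more generally, $C_{2m}\cong C_m\times K_2$ for odd $m$ (bipartite double covers of odd cycles), this would exhibit a common direct factor of $\Gamma$ and $C_n$, contradicting coprimality. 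Making this correspondence between direct-prime factors of $\Gamma$ and Cartesian-prime factors of $S(\Gamma)$ fully rigorous---accounting for loops, for the disconnectedness of skeletons of bipartite graphs, and for the behaviour of the skeleton under the direct product---is the technical crux. I expect the small cases $n=6$ and $n=8$ each to require their own bookkeeping, and since no identity of the form $C_8\cong C_4\times K_2$ is available, it is plausible that $n=8$ will need a direct ad hoc analysis of the $C_4$ components rather than a clean appeal to~\eqref{thm3b}.
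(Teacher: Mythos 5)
Your ``if'' direction and your treatment of every even $n\geq10$ are correct and essentially coincide with the paper: the ``if'' part is the contrapositive of Theorem~\ref{thm3}\,\eqref{thm3a}, and for $n\geq10$ Theorem~\ref{thm3}\,\eqref{thm3c} applies to $S(C_n)=2C_{n/2}$ (your witness automorphism---identity on the $U$-cycle, a rotation on the $W$-cycle---is in fact more flexible than the paper's $i\mapsto-i$, which separates $U$ from $W$ only when $4\mid n$). The genuine gap is at $n=6$ and $n=8$. There you propose to invoke Theorem~\ref{thm3}\,\eqref{thm3b}, whose hypothesis is that $S(\Gamma)$ is Cartesian-coprime to $C_{n/2}$, and to deduce this from the direct-product coprimality of $\Gamma$ and $C_n$ via the claim that a common Cartesian-prime factor of $S(\Gamma)$ and $C_{n/2}$ would lift to a common direct factor of $\Gamma$ and $C_n$. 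That transfer runs in exactly the direction the available tools do not provide: Lemma~\ref{le:lecsb}\,\eqref{le:lecsd} converts direct factorizations of a graph into Cartesian factorizations of its skeleton, but there is no converse allowing you to pass from a Cartesian factor of $S(\Gamma)$ back to a direct factor of $\Gamma$ (a Cartesian-prime factor of a skeleton need not be the skeleton of any direct factor). You flag this yourself as the ``technical crux,'' but without it the cases $n=6$ and $n=8$ remain open, so the proposal is incomplete precisely where the real difficulty lies.

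The paper closes $n=6$ (indeed every $n\equiv2\pmod4$) by a mechanism you mention but do not exploit: since $m=n/2$ is odd, $C_n\cong K_2\times C_m$, hence $\Gamma\times C_n\cong(\Gamma\times K_2)\times C_m$. The pair $(\Gamma\times K_2,C_m)$ is nontrivial ($\Gamma\times K_2$ is connected, bipartite and twin-free, and is coprime to $C_m$ since $C_m$ admits no nontrivial direct factorization), so Theorem~\ref{thm1} makes it stable; combined with the assumed stability of $\Gamma$ this gives $|\Aut(\Gamma\times C_n)|=|\Aut(\Gamma)|\cdot2\cdot|D_{2m}|=|\Aut(\Gamma)|\,|\Aut(C_n)|$, contradicting the instability of $(\Gamma,C_n)$. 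No Cartesian-coprimality of $S(\Gamma)$ is ever needed, and this counting argument is the piece missing from your sketch; you should adopt it for $n=6$, and it works uniformly for all $n\equiv2\pmod4$. Finally, your suspicion about $n=8$ is well founded but unresolved---and, notably, the paper does not resolve it either: its first case asserts that $C_{n/2}$ is Cartesian-prime for every $n$ divisible by $4$, which at $n=8$ claims that $C_4$ is Cartesian-prime, contradicting the paper's own remark that $C_4=K_2\Bbox K_2$ is not. So at $n=8$ neither your proposal nor the published proof is complete as written; that case genuinely requires a separate argument.
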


\begin{theorem}\label{thm2}
Let $n\geq3$ be an integer such that $n\neq4$. Then there exists no nontrivially unstable graph pair $(\Gamma,C_n)$ if and only if $n$ is odd.
\end{theorem}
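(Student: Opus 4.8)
The plan is to prove the equivalence by separating the two directions according to the parity of $n$, since the underlying mechanisms are genuinely different: for even $n$ I must \emph{produce} a nontrivially unstable pair, while for odd $n$ I must show \emph{every} nontrivial pair is stable. I begin with the even case. For even $n\ge 6$, Theorem~\ref{le:lepco} reduces the existence of a nontrivially unstable pair $(\Gamma,C_n)$ to the existence of a single nontrivially unstable graph $\Gamma$ coprime to $C_n$. First I would pin down the direct-product factorizations of $C_n$: by Weichsel's connectivity criterion together with a regularity count (a direct factor of a connected $2$-regular graph must itself be connected and $1$- or $2$-regular), the only nontrivial factorization is $C_n\cong C_{n/2}\times K_2$ when $n\equiv 2\pmod 4$, whereas $C_n$ is direct-product-prime when $n\equiv 0\pmod 4$. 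Since a nontrivially unstable graph is non-bipartite and a non-bipartite graph has no bipartite direct factor, coprimality to $C_n$ is automatic when $n\equiv 0\pmod 4$ and, when $n\equiv 2\pmod 4$, reduces to the single requirement that $C_{n/2}$ not be a direct factor of $\Gamma$. I would then fix once and for all a direct-product-prime nontrivially unstable graph $\Gamma_0$ that is not a cycle (such graphs are known to exist, and primality is a finite check for an explicit small example); being prime and non-cyclic, $\Gamma_0$ has no cycle as a direct factor and is therefore coprime to every even cycle, so $(\Gamma_0,C_n)$ is nontrivially unstable for every even $n\ge 6$ by Theorem~\ref{le:lepco}.

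For the odd case I must show every nontrivial $(\Gamma,C_n)$ is stable, which simultaneously repairs and re-proves Theorem~\ref{thm1}. Here $C_n$ is non-bipartite, so nontriviality forces $\Gamma$ to be bipartite, say with bipartition $\{U,W\}$. Using that stability is symmetric, I would regard $(\Gamma,C_n)$ as the instance of Theorem~\ref{thm3} in which $C_n$ is the first graph and $\Sigma:=\Gamma$ is the bipartite one. The odd cycle is itself stable: its bipartite double cover is $C_n\times K_2\cong C_{2n}$, and comparing $|\Aut(C_{2n})|$ with $|\Aut(C_n)\times\Aut(K_2)|$ gives equality, so the single-graph hypothesis of Theorem~\ref{thm3}\eqref{thm3b} and~\eqref{thm3c} holds. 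Moreover, from the direct-coprimality of $(\Gamma,C_n)$ and the correspondence between direct-product factorizations of a graph and Cartesian factorizations of its skeleton, $S(C_n)$ is Cartesian-coprime to each connected component of $S(\Gamma)$, supplying the coprimality hypothesis of~\eqref{thm3b}.

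It then remains to fit every admissible bipartite partner $\Gamma$ into the hypotheses of~\eqref{thm3b} or~\eqref{thm3c}. I would split according to whether $\Gamma$ admits an automorphism interchanging $U$ and $W$: if it does, Theorem~\ref{thm3}\eqref{thm3b} applies directly; if it does not, the bipartition is canonical and I would try to verify the hypotheses of~\eqref{thm3c}, namely $|U|,|W|\ge 3$, that no component of $B(\Sigma)$ is complete, that each component of $S(\Gamma)$ is Cartesian-prime, and that $S(\Gamma)$ has an automorphism fixing a vertex of $U$ but none of $W$. The hard part — and precisely the configuration overlooked in the published proof of Theorem~\ref{thm1} — is the family of degenerate partners satisfying neither the swap condition of~\eqref{thm3b} nor all the hypotheses of~\eqref{thm3c}: those with a part of size at most two, with a complete component of $B(\Sigma)$, with a non-Cartesian-prime skeleton component, or with an automorphism group too rigid to provide the required part-asymmetric symmetry. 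For these I would argue by hand, exploiting the rigidity of the odd cycle $C_n$ (its skeleton is again essentially $C_n$, and $C_n$ has no proper direct factor) to force any automorphism of $\Gamma\times C_n$ to respect both projection structures and hence be expected; this analysis of the exceptional cases is where the genuine work and the repair of the gap reside. Combining the generic and exceptional cases yields stability of $(\Gamma,C_n)$ for all odd $n$, and together with the even case this establishes the stated equivalence.
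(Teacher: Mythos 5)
The fatal gap is in your odd case. You assert that direct coprimality of $\Gamma$ and $C_n$ implies, ``from the correspondence between direct-product factorizations of a graph and Cartesian factorizations of its skeleton,'' that $S(C_n)$ is Cartesian-coprime to each connected component of $S(\Gamma)$. No such correspondence exists in the direction you need: Lemma~\ref{le:lecsb}\,\eqref{le:lecsd} converts direct products into Cartesian products of skeletons, but a Cartesian factor $C_n$ of a component of $S(\Gamma)$ does not automatically lift back to a direct factor $C_n$ of $\Gamma$. This unjustified implication is precisely the crux of the whole problem: the paper's proof of Theorem~\ref{thm1} is devoted to the configuration where the skeleton components of the bipartite factor \emph{do} decompose as $\Gamma_+\Box C_n^{\Box,c-1}$ and $\Gamma_-\Box C_n^{\Box,d-1}$ (a situation that Proposition~\ref{lem:mixer} forces once an unexpected automorphism exists), and the genuine work---Claims~\ref{claim2}--\ref{claim4}, which use the unexpected automorphism to build the partitions $\{L_1,\ldots,L_n\}$ and $\mathcal{P}$ demanded by Lemma~\ref{le:ledc} and thereby manufacture a direct factorization $\Gamma\cong\Delta\times C_n$ contradicting coprimality---is exactly what your proposal would still have to supply. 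Routing the odd case through Theorem~\ref{thm3}\,\eqref{thm3b} and~\eqref{thm3c} cannot work in general, since a bipartite partner of $C_n$ may have no part-swapping automorphism \emph{and} no part-asymmetric fixed-point automorphism, may have non-Cartesian-prime skeleton components, etc.; your residual class of ``exceptional'' graphs is therefore large, and your proposal explicitly defers it to an argument ``by hand'' that is never given, while acknowledging that this is ``where the genuine work and the repair of the gap reside.'' Note also that, within this paper, the odd direction of Theorem~\ref{thm2} can simply be quoted from Theorem~\ref{thm1}, whose corrected proof appears in Section~\ref{sec3}; that is what the paper's proof does. The moment you choose to re-prove it instead, you must actually carry out the partition argument, and your sketch does not.

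Your even case is structurally legitimate---indeed the paper remarks that one may construct a nontrivially unstable $\Gamma$ that makes $(\Gamma,C_n)$ nontrivial and invoke Theorem~\ref{thm3}\,\eqref{thm3a}, which is essentially your route via Theorem~\ref{le:lepco}---but as written it has two defects. First, it rests on an unexhibited object: you never name the ``direct-product-prime nontrivially unstable graph $\Gamma_0$ that is not a cycle,'' nor verify any example, so the existence half of your even case is an assertion, not a proof. Second, your factorization analysis of $C_n$ is incorrect under this paper's conventions, which allow loops: besides $C_{2m}\cong K_2\times C_m$ for odd $m$, one also has $C_{2k}\cong K_2\times P$ where $P$ is the path on $k$ vertices with a loop at each end, so $C_n$ is \emph{not} direct-product-prime for $n\equiv0\pmod 4$, and ``prime and non-cyclic'' does not by itself yield coprimality to every even cycle (one must also exclude looped paths). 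The paper sidesteps both issues with a fully explicit construction: $\Gamma=\mathrm{Cay}(Z_8,\{\pm1,\pm2\})$, shown coprime to $C_n$ by a valency-and-order count, together with the explicit unexpected automorphism $(u,i)\mapsto(5u+4i,i)$ of $\Gamma\times C_n$. Replacing your existence assertion with a verified example and repairing the coprimality argument would fix the even half; the odd half, however, needs the paper's Section~\ref{sec3} argument essentially in full.
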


The remainder of the paper is organized as follows. After the preliminary section, we introduce and develop two key tools---two-fold automorphisms and partitions in Cartesian products of vertex sets---in Sections~\ref{sec5} and~\ref{sec6}, respectively. These tools not only play a central role in the proofs of the main results presented here, but also shed light on a general approach to Conjecture~\ref{conj}. Then in Section~\ref{sec2} we prove Theorem~\ref{thm3}, and in Section~\ref{sec3} we establish Theorems~\ref{le:lepco} and~\ref{thm2}.

\section{Preliminaries}

For a cycle $C_n$, we always label its vertices as $1,\ldots,n$ consecutively, counted modolo $n$.
Let $\Gamma$ be a graph. Denote the edge set of $\Gamma$ by $E(\Gamma)$ (note that $E(\Gamma)$ may contain singletons) and arc set of $\Gamma$ by $A(\Gamma)$ (an arc is just an ordered pair of adjacent vertices).
When we take arbitrary $\{u,v\}\in E(\Gamma)$, we do not assume $u\neq v$, as $\Gamma$ may have loops.
For a positive integer $n$, denote the Cartesian product of $n$ copies of $\Gamma$ by $\Gamma^{\Box,n}$, denote the cyclic group of order $n$ by $Z_n$, and if $n\neq6$ is even, then denote the dihedral group of order $n$ by $D_n$.

Recall the definition of Cartesian-prime graphs in the Introduction. For example, $C_n$ is Cartesian-prime for $n\neq4$.
An expression $\Gamma\cong\Gamma_1\Bbox\cdots\Bbox\Gamma_n$ with each $\Gamma_i$ Cartesian-prime is called a \emph{Cartesian-prime factorization} of the graph $\Gamma$.
In the following lemma, statement~\eqref{thm:cartesianc} is easy to see, statement~\eqref{thm:cartesiand} can be found in~\cite[Theorem~6.21]{HIK2011}, statement~\eqref{thm:cartesiana} is given by Sabidussi and Vizing (see for instance~\cite[Theorem~6.6]{HIK2011}), and statement~\eqref{thm:cartesianb} can be derived from~\cite[Corollary~6.11]{HIK2011} and~\cite[Theorem~6.13]{HIK2011}.

\begin{lemma}\label{thm:cartesian}
The following statements hold:
\begin{enumerate}[\rm(a)]
\item\label{thm:cartesianc} If $\Gamma$ and $\Sigma$ are both connected graphs, then $\Gamma\Bbox\Sigma$ is connected.
\item\label{thm:cartesiand} If $\Gamma$, $\Sigma$ and $\Delta$ are graphs such that $\Gamma\Bbox\Delta\cong\Sigma\Bbox\Delta$ and $\Delta$ is nonempty, then $\Gamma\cong\Sigma$.
\item\label{thm:cartesiana} Every connected graph has a unique Cartesian-prime factorization up to isomorphism and the order of the factors.
\item\label{thm:cartesianb} The automorphism group of a Cartesian product of connected Cartesian-prime graphs is the direct product of the wreath products on the sets of pairwise isomorphic Cartesian-prime graphs.
\end{enumerate}
\end{lemma}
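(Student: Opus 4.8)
The plan is to prove the four statements in an order that reflects their dependencies, treating the unique factorization in~\eqref{thm:cartesiana} as the keystone from which~\eqref{thm:cartesiand} and~\eqref{thm:cartesianb} follow, while~\eqref{thm:cartesianc} is independent and elementary. For~\eqref{thm:cartesianc} I would concatenate walks: given vertices $(u,i)$ and $(v,j)$ of $\Gamma\Bbox\Sigma$, connectedness of $\Gamma$ yields a walk from $u$ to $v$ and connectedness of $\Sigma$ a walk from $i$ to $j$; by the definition of the Cartesian product, fixing the second coordinate at $i$ turns the first into a walk from $(u,i)$ to $(v,i)$, and fixing the first coordinate at $v$ turns the second into a walk from $(v,i)$ to $(v,j)$, so their concatenation connects $(u,i)$ to $(v,j)$.

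The heart of the matter, and the step I expect to be the main obstacle, is~\eqref{thm:cartesiana}. Rather than guessing a factorization, I would recover the prime factors canonically from $\Gamma$ itself. The standard route introduces a relation $\Theta$ on the edge set $E(\Gamma)$ defined through distances in $\Gamma$ (the Djokovi\'c--Winkler relation), and shows that the connected components cut out by the transitive closure of $\Theta$ realise the layers of a Cartesian factorization. Proving that this construction is well defined, that the induced factorization has Cartesian-prime factors, and that it is unique up to isomorphism and the order of the factors constitutes the substantial work; I would follow the development in~\cite[Chapter~6]{HIK2011}.

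Granting~\eqref{thm:cartesiana}, statement~\eqref{thm:cartesiand} follows for connected graphs by comparing the multiset prime factorizations of $\Gamma\Bbox\Delta$ and $\Sigma\Bbox\Delta$: these multisets agree, and removing the prime factors of the nonempty $\Delta$ from both leaves identical factorizations of $\Gamma$ and $\Sigma$, whence $\Gamma\cong\Sigma$; the general case reduces to this, as in~\cite[Theorem~6.21]{HIK2011}. For~\eqref{thm:cartesianb}, the canonicity of the prime factorization forces every automorphism of a Cartesian product of connected Cartesian-prime graphs to permute the factors while preserving their isomorphism types. Such an automorphism thus amounts to a relabelling within each class of $m$ mutually isomorphic prime factors $\Lambda$---contributing a wreath product $\Aut(\Lambda)\wr\Sym(m)$---together with independent choices across distinct classes, and these assemble precisely into the direct product of the wreath products.
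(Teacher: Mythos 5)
Your proposal is correct and follows essentially the same route as the paper, which does not prove these statements from scratch but treats them as classical facts: part~(a) of the lemma is noted as easy (your walk-concatenation argument is the standard one), and the remaining parts are cited from the Sabidussi--Vizing unique-factorization theory in~\cite[Chapter~6]{HIK2011}, exactly the development you defer to for the Djokovi\'c--Winkler construction, the cancellation law, and the wreath-product description of the automorphism group.
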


\begin{definition}\label{def:skeleton}
For a graph $\Gamma$, let $B(\Gamma)$ be the graph with vertex set $V(\Gamma)$ and edge set
\[
\{\{u,v\}:u, v\in V(\Gamma),\,u\neq v,\,\,N_{\Gamma}(u)\cap N_{\Gamma}(v)\neq\varnothing\}.
\]
An edge $\{u,v\}$ of $B(\Gamma)$ is said to be \emph{dispensable} with respect to $\Gamma$ if there exists some $w\in V(\Gamma)$ satisfying both of the conditions:
\begin{itemize}
\item $N_\Gamma(u)\cap N_\Gamma(v)\subsetneq N_\Gamma(u)\cap N_\Gamma(w)$ or $N_\Gamma(u)\subsetneq N_\Gamma(w)\subsetneq N_\Gamma (v)$, and
\item $N_\Gamma(v)\cap N_\Gamma(u)\subsetneq N_\Gamma(v)\cap N_\Gamma(w)$ or $N_\Gamma(v)\subsetneq N_\Gamma(w)\subsetneq N_\Gamma (u)$.
\end{itemize}
The {\em Cartesian skeleton} $S(\Gamma)$ of $\Gamma$ is the graph obtained from $B(\Gamma)$ by removing all dispensable edges with respect to $\Gamma$.
\end{definition}

For example, $B(C_n)$ is the graph with vertex set $\{1, \ldots, n\}$ such that each $i$ is adjacent to $i+2$ and $i-2$, whence $B(C_n)$ is a cycle of length $n$ when $n$ is odd, and is the union of two cycles of length $n/2$ when $n$ is even.
Moreover, there is no edge in $B(C_n)$ that is dispensable with respect to $C_n$. Therefore,
\begin{equation}\label{eq:S(Cn)}
S(C_n)=B(C_n)\cong
\begin{cases}
C_n&\text{if $n$ is odd}\\
2C_{n/2}&\text{if $n$ is even}.
\end{cases}
\end{equation}
The next lemma collects some basic facts about Cartesian skeletons for our needs.

\begin{lemma}[{\cite[Propositions~8.10~and~8.13]{HIK2011}}]\label{le:lecsb}
Let $\Gamma$ and $\Sigma$ be graphs. Then the following statements hold:
\begin{enumerate}[\rm(a)]
\item\label{le:leas} Each automorphism $\Gamma$ is also an automorphisms of $S(\Gamma)$.
\item\label{it:itcsba} If $\Gamma$ is connected and non-bipartite, then $S(\Gamma)$ is connected.
\item\label{it:itcsbb} If $\Gamma$ is connected and bipartite, then $S(\Gamma)$ has precisely two connected components, whose respective vertex sets are the two partite sets of $\Gamma$.
\item\label{le:lecsd} If $\Gamma$ and $\Sigma$ are both twin-free without isolated vertices, then $S(\Gamma\times \Sigma)=S(\Gamma)\Bbox S(\Sigma)$.
\end{enumerate}
\end{lemma}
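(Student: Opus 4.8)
The plan is to establish the four parts separately; parts (a)--(c) are comparatively soft, while part (d) carries the real content. For part (a), every $\sigma\in\Aut(\Gamma)$ satisfies $N_\Gamma(\sigma u)=\sigma\bigl(N_\Gamma(u)\bigr)$, so it preserves both the relation ``$u$ and $v$ have a common neighbour'' and every neighbourhood-inclusion appearing in Definition~\ref{def:skeleton}. Hence $\sigma\in\Aut(B(\Gamma))$ and $\sigma$ sends dispensable edges to dispensable edges, giving $\sigma\in\Aut(S(\Gamma))$.

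For parts (b) and (c), I would first show that $B(\Gamma)$ and $S(\Gamma)$ have the same connected components. The point is that if $\{u,v\}$ is dispensable with witness $w$, then the two defining conditions force $N_\Gamma(u)\cap N_\Gamma(w)\neq\varnothing$ and $N_\Gamma(w)\cap N_\Gamma(v)\neq\varnothing$, so $\{u,w\},\{w,v\}\in E(B(\Gamma))$; an induction on the lexicographic measure $\bigl(|V(\Gamma)|-|N_\Gamma(u)\cap N_\Gamma(v)|,\ |N_\Gamma(u)|+|N_\Gamma(v)|\bigr)$ then bridges $\{u,v\}$ inside $S(\Gamma)$, since each of $\{u,w\},\{w,v\}$ has strictly smaller measure (the common-neighbourhood term strictly increases, except in the nested ``chain'' case, where the neighbourhood-size term strictly drops). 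Once $B(\Gamma)$ and $S(\Gamma)$ share their components, I would identify the components of $B(\Gamma)$ with the classes of the relation ``joined by an even walk in $\Gamma$'': for connected $\Gamma$ this is everything when $\Gamma$ is non-bipartite (part (b)), and is exactly the two partite sets when $\Gamma$ is bipartite (part (c)).

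For part (d), the starting identity is $N_{\Gamma\times\Sigma}(u,i)=N_\Gamma(u)\times N_\Sigma(i)$, whence $N(u,i)\cap N(v,j)=\bigl(N_\Gamma(u)\cap N_\Gamma(v)\bigr)\times\bigl(N_\Sigma(i)\cap N_\Sigma(j)\bigr)$. Reading off $E(B(\Gamma\times\Sigma))$ and splitting its edges into horizontal ($i=j$), vertical ($u=v$), and diagonal ($u\neq v,\ i\neq j$) types, it suffices to prove (i) every diagonal edge is dispensable, and (ii) a horizontal edge $\{(u,i),(v,i)\}$ is dispensable in $\Gamma\times\Sigma$ if and only if $\{u,v\}$ is dispensable in $\Gamma$ (and symmetrically for vertical edges); together these say exactly that the surviving edges are those of $S(\Gamma)\Bbox S(\Sigma)$. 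For (i) I would use one of the two ``corner'' witnesses $(u,j)$ or $(v,i)$: their first disjuncts settle all cases except the two aligned ones $N_\Gamma(u)\subsetneq N_\Gamma(v),\ N_\Sigma(i)\subsetneq N_\Sigma(j)$ and its mirror, which are handled by the chain disjunct $N(u,i)\subsetneq N(u,j)\subsetneq N(v,j)$; twin-freeness of $\Gamma$ and $\Sigma$ removes the equal-neighbourhood degeneracy, and the absence of isolated vertices keeps all products nonempty. For (ii), the direction ``$\Leftarrow$'' lifts a $\Gamma$-witness $w$ to $(w,i)$, while ``$\Rightarrow$'' starts from a product witness $(w,\ell)$ and uses the product-inclusion rule to force $N_\Sigma(i)\subseteq N_\Sigma(\ell)$ in the first-disjunct case and $N_\Sigma(\ell)=N_\Sigma(i)$---hence $\ell=i$ by twin-freeness---in the chain case, in either event recovering the two $\Gamma$-dispensability conditions for the single vertex $w$.

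I expect part (d) to be the main obstacle, concentrated in (i) the choice of corner witness together with the appeal to the chain clause in the aligned cases, and in (ii) the forward direction, where the strict product-inclusions must be combined with twin-freeness to pin down the $\Sigma$-coordinate of the witness.
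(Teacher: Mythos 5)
Your proposal is correct, but there is no internal proof to compare it with: the paper does not prove Lemma~\ref{le:lecsb} at all, it quotes the result from the Handbook of Product Graphs \cite{HIK2011} (Propositions~8.10 and~8.13, with part~\eqref{le:leas} an immediate consequence of the fact that isomorphisms preserve Boolean squares and dispensability). What you have written is in effect a self-contained reconstruction of the Handbook's arguments, and the steps check out. For parts~\eqref{it:itcsba}--\eqref{it:itcsbb}, your bridging observation is right: either disjunct of each dispensability condition forces $N_\Gamma(u)\cap N_\Gamma(w)\neq\varnothing$ and $N_\Gamma(v)\cap N_\Gamma(w)\neq\varnothing$, and your lexicographic measure does strictly decrease on both $\{u,w\}$ and $\{w,v\}$: in the first-disjunct case the common-neighbourhood term grows, while in the chain case $N_\Gamma(u)\subsetneq N_\Gamma(w)\subsetneq N_\Gamma(v)$ one has $N_\Gamma(u)\cap N_\Gamma(w)=N_\Gamma(u)=N_\Gamma(u)\cap N_\Gamma(v)$, so the first term is unchanged and $|N_\Gamma(w)|<|N_\Gamma(v)|$ makes the second term drop; combined with the even-walk description of the components of $B(\Gamma)$, this yields both statements. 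For part~\eqref{le:lecsd}, the identity $B(\Gamma\times\Sigma)=$ horizontal $\cup$ vertical $\cup$ diagonal edges, the corner witnesses $(u,j)$ and $(v,i)$ for diagonal edges (where, as you say, the first disjuncts fail exactly in the two aligned cases, which the chain clause then covers---this is precisely where twin-freeness enters), and the lifting/projection of witnesses for horizontal and vertical edges are all sound.

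The one place where your sketch glosses over a genuine subtlety---though it does go through---is the forward direction of your claim (ii) in the first-disjunct case. From $(N_\Gamma(u)\cap N_\Gamma(v))\times N_\Sigma(i)\subsetneq(N_\Gamma(u)\cap N_\Gamma(w))\times(N_\Sigma(i)\cap N_\Sigma(\ell))$ you need the strictness to land in the $\Gamma$-coordinate, since the $\Gamma$-dispensability condition for $\{u,v\}$ requires $N_\Gamma(u)\cap N_\Gamma(v)\subsetneq N_\Gamma(u)\cap N_\Gamma(w)$ and not merely containment. This is automatic, but for a reason worth spelling out: the inclusion forces $N_\Sigma(i)\subseteq N_\Sigma(i)\cap N_\Sigma(\ell)$, so the $\Sigma$-parts of the two sides are both equal to $N_\Sigma(i)$, and strictness of the product inclusion can only come from the $\Gamma$-part. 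Two trivial points you should also record in a full write-up: the witness $w$ in parts~\eqref{it:itcsba}--\eqref{it:itcsbb} is automatically distinct from $u$ and $v$ (otherwise the condition attached to the other endpoint fails in both disjuncts), and part~\eqref{it:itcsbb} tacitly uses that both partite sets of a connected bipartite graph with an edge are nonempty.
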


The last result of this section, although slightly technical, are applied frequently in the paper.

\begin{lemma}\label{claim1}
Let $\Gamma$ and $\Sigma$ be connected twin-free graphs such that $\Gamma$ is non-bipartite and $\Sigma$ is bipartite with bipartition $\{U,W\}$, and let $\tau$ be an automorphism of $S(\Gamma)\Bbox S(\Sigma)$. Then there exists $\sigma\in\Aut(S(\Sigma))$ with $|\sigma|\leq2$ such that $(\mathrm{id},\sigma)\tau$ and $\tau(\mathrm{id},\sigma)$ stabilize $V(\Gamma)\times U$, where $\id$ is the identity permutation on $V(\Gamma)$, and that if $|\sigma|=2$ then $\sigma$ interchanges $U$ and $W$.
\end{lemma}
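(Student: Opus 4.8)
The plan is to exploit that the Cartesian skeletons split $S(\Gamma)\Bbox S(\Sigma)$ into two connected components indexed by the bipartition of $\Sigma$, and then to track how $\tau$ permutes them. Since $\Gamma$ is connected and non-bipartite, Lemma~\ref{le:lecsb}\eqref{it:itcsba} gives that $S(\Gamma)$ is connected; since $\Sigma$ is connected and bipartite, Lemma~\ref{le:lecsb}\eqref{it:itcsbb} gives that $S(\Sigma)$ has exactly two connected components, whose vertex sets are $U$ and $W$, say $S(\Sigma)_U$ and $S(\Sigma)_W$. As the Cartesian product distributes over disjoint unions and the product of two connected graphs is connected (Lemma~\ref{thm:cartesian}\eqref{thm:cartesianc}), the graph $S(\Gamma)\Bbox S(\Sigma)$ has precisely two connected components: $S(\Gamma)\Bbox S(\Sigma)_U$ on $V(\Gamma)\times U$ and $S(\Gamma)\Bbox S(\Sigma)_W$ on $V(\Gamma)\times W$.

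Being an automorphism, $\tau$ permutes these two components, so either it stabilizes both $V(\Gamma)\times U$ and $V(\Gamma)\times W$ or it interchanges them. In the first case I take $\sigma=\id$, so that $(\id,\sigma)\tau=\tau(\id,\sigma)=\tau$ stabilizes $V(\Gamma)\times U$, with $|\sigma|=1\leq2$ and the order-two condition vacuous. In the second case $\tau$ restricts to an isomorphism $S(\Gamma)\Bbox S(\Sigma)_U\cong S(\Gamma)\Bbox S(\Sigma)_W$; the common factor $S(\Gamma)$ is nonempty (a connected non-bipartite graph on at least two vertices has connected, hence edge-bearing, skeleton, while the one-vertex case is trivial), so the cancellation law, Lemma~\ref{thm:cartesian}\eqref{thm:cartesiand}, yields $S(\Sigma)_U\cong S(\Sigma)_W$. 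Fixing an isomorphism $\phi\colon U\to W$ realizing this, I would define $\sigma\in\Sym(V(\Sigma))$ by $\sigma|_U=\phi$ and $\sigma|_W=\phi^{-1}$; then $\sigma$ is an automorphism of $S(\Sigma)$ carrying each component onto the other, it interchanges $U$ and $W$, and $\sigma^2=\id$, so $|\sigma|=2$.

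It then remains to verify the stabilization in the second case. Because $\sigma$ interchanges $U$ and $W$, the permutation $(\id,\sigma)$ interchanges $V(\Gamma)\times U$ and $V(\Gamma)\times W$; since $\tau$ also interchanges them, both composites $(\id,\sigma)\tau$ and $\tau(\id,\sigma)$ map $V(\Gamma)\times U$ to itself, as required. The only delicate step is this second case: one must convert the abstract isomorphism of components supplied by $\tau$ into a genuine involutory automorphism of $S(\Sigma)$ that swaps the partite sets. Cancellation is what makes this possible, and its use hinges on confirming that $S(\Gamma)$ is nonempty; the explicit involution built from $\phi$ then furnishes $\sigma$ directly, with no need to track how $\tau$ acts across the two Cartesian factors.
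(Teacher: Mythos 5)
Your proposal is correct and takes essentially the same route as the paper's own proof: both identify the two connected components of $S(\Gamma)\Bbox S(\Sigma)$ on $V(\Gamma)\times U$ and $V(\Gamma)\times W$, take $\sigma=\id$ when $\tau$ stabilizes them, and otherwise invoke the cancellation law of Lemma~\ref{thm:cartesian} to obtain $S_+(\Sigma)\cong S_-(\Sigma)$ and hence an involution $\sigma\in\Aut(S(\Sigma))$ swapping $U$ and $W$, after which the composites stabilize $V(\Gamma)\times U$ because each factor interchanges the two components. Your only departures are cosmetic: you construct $\sigma$ explicitly from an isomorphism $\phi$ (as $\phi$ on $U$ and $\phi^{-1}$ on $W$) where the paper merely asserts its existence, and you omit the paper's semidirect-product decomposition of $\aut(S(\Gamma\times\Sigma))$, which is not needed for the conclusion.
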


\begin{proof}
By Lemma~\ref{le:lecsb}, there are precisely two connected components of $S(\Sigma)$, say $S_+(\Sigma)$ and $S_-(\Sigma)$, with vertex sets $U$ and $W$ respectively. Let $X=V(\Gamma)\times U$ and $Y=V(\Gamma)\times W$.
Since $\tau$ is an automorphism of $S(\Gamma\times\Sigma)=S(\Gamma)\Bbox S(\Sigma)$, it preserves the bipartition $\{X,Y\}$.
If $\tau$ stabilizes $X$, then the identity permutation $\sigma$ on $V(\Sigma)$ is as required.
Now assume that $\tau$ interchanges $X$ and $Y$. Then $\tau$ induces a graph isomorphism between the two connected components
\[
S(\Gamma)\Bbox S_+(\Sigma)\ \text{ and }\ S(\Gamma)\Bbox S_-(\Sigma)
\]
of $S(\Gamma\times\Sigma)$ with vertex sets $X$ and $Y$ respectively. By Lemma~\ref{thm:cartesian}, $S_+(\Sigma)\cong S_-(\Sigma)$. Hence there exists an involution $\sigma\in\Aut(S(\Sigma))$ swapping $V(S_+(\Sigma))=U$ and $V(S_-(\Sigma))=W$, and it follows that
\[
\aut(S(\Gamma\times\Sigma))=(\aut(S(\Gamma)\Bbox S_+(\Sigma))\times\aut(S(\Gamma)\Bbox S_-(\Sigma)))\rtimes\langle(\mathrm{id},\sigma)\rangle.
\]
Since both $\tau$ and $(\mathrm{id},\sigma)$ interchanges $X$ and $Y$, the product $\tau(\mathrm{id},\sigma)$ stabilizes $X$. Hence $\sigma$ is as required.
\end{proof}

\section{Two-fold-automorphisms}\label{sec5}

\begin{definition}
For a graph $\Gamma$, a pair $(\alpha,\beta)$ of permutations on $V(\Gamma)$ is a \emph{two-fold-automorphism} (\emph{TF-automorphism} for short) if for $s,t\in V(\Gamma)$,
\begin{equation}\label{eq:TF}
(s,t)\in A(\Gamma)\,\Leftrightarrow\,(s^\alpha,t^\beta)\in A(\Gamma).
\end{equation}
Since $(\alpha,\beta)$ induces a permutation on $V(\Gamma)\times V(\Gamma)$ and $V(\Gamma)$ is finite,~\eqref{eq:TF} is equivalent to
\[
(s,t)\in A(\Gamma)\,\Rightarrow\,(s^\alpha,t^\beta)\in A(\Gamma).
\]
The set of TF-automorphisms of $\Gamma$ forms a group under the product
\[
(\alpha_1,\beta_1)(\alpha_2,\beta_2):=(\alpha_1\alpha_2,\beta_1\beta_2),
\]
and we denote this group by $\TFA(\Gamma)$. A TF-automorphism $(\alpha,\beta)$ is \emph{diagonal} if $\alpha=\beta$.
\end{definition}

A diagonal TF-automorphism $(\alpha,\alpha)$ corresponds uniquely to an automorphism $\alpha$, which gives $\Aut(\Gamma)\leq\TFA(\Gamma)$ for each graph $\Gamma$. Moreover, the equality holds if and only if every TF-automorphism of $\Gamma$ is diagonal. Writing $V(K_2)=\{0,1\}$, the graph $\Gamma\times K_2$ has a bipartition $\{V(\Gamma)\times\{0\},V(\Gamma)\times\{1\}\}$. The following lemma is from~\cite[Lemma~3.1~and~Theorem~3.2]{LMS2015}.

\begin{lemma}\label{lem:TF}
For a graph $\Gamma$, the following statements hold:
\begin{enumerate}[\rm(a)]
\item\label{lem:TF-c} The subgroup of $\Aut(\Gamma\times K_2)$ stabilizing $V(\Gamma)\times\{0\}$ is isomorphic to $\TFA(\Gamma)$.
\item\label{lem:TF-a} If $\Gamma$ has a non-diagonal TF-automorphism, then $\Gamma$ is unstable.
\item\label{lem:TF-b} Suppose that $\Gamma$ is connected and non-bipartite. Then $\Aut(\Gamma\times K_2)=\TFA(\Gamma)\rtimes Z_2$. In particular, $\Gamma$ is unstable if and only if it admits a non-diagonal TF-automorphism.
\end{enumerate}
\end{lemma}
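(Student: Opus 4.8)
The plan rests on the observation that, writing $V(K_2)=\{0,1\}$, every edge of $\Gamma\times K_2$ joins a vertex of $V(\Gamma)\times\{0\}$ to a vertex of $V(\Gamma)\times\{1\}$, because in $K_2$ the only adjacency is between $0$ and $1$. Hence $\Gamma\times K_2$ is bipartite with bipartition $\{V(\Gamma)\times\{0\},V(\Gamma)\times\{1\}\}$, and an automorphism stabilizing $V(\Gamma)\times\{0\}$ necessarily stabilizes $V(\Gamma)\times\{1\}$ as well. To prove \eqref{lem:TF-c}, I would take such a layer-fixing automorphism $\tau$ and record its action on the two parts as $(u,0)\mapsto(u^\alpha,0)$ and $(u,1)\mapsto(u^\beta,1)$ for permutations $\alpha,\beta$ of $V(\Gamma)$. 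Since the edges of $\Gamma\times K_2$ are exactly the pairs $\{(s,0),(t,1)\}$ with $(s,t)\in A(\Gamma)$, preservation of adjacency by $\tau$ is equivalent to $(s,t)\in A(\Gamma)\Leftrightarrow(s^\alpha,t^\beta)\in A(\Gamma)$, that is, to $(\alpha,\beta)\in\TFA(\Gamma)$; conversely every such pair yields a layer-fixing automorphism. The assignment $\tau\mapsto(\alpha,\beta)$ is then a bijection, and it respects composition because $\tau_1\tau_2$ acts as $\alpha_1\alpha_2$ on the first part and $\beta_1\beta_2$ on the second, matching the product in $\TFA(\Gamma)$.

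For \eqref{lem:TF-a}, a non-diagonal $(\alpha,\beta)$ produces via \eqref{lem:TF-c} a layer-fixing automorphism $\tau$ of $\Gamma\times K_2$, and I would argue it is unexpected: any element of $\Aut(\Gamma)\times\Aut(K_2)$ that stabilizes each part must use the identity of $\Aut(K_2)$, hence act as a single $g\in\Aut(\Gamma)$ on both parts, which would force $\alpha=\beta$. As $\alpha\neq\beta$, the automorphism $\tau$ lies outside $\Aut(\Gamma)\times\Aut(K_2)$, so $\Gamma$ is unstable.

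For \eqref{lem:TF-b}, the first step is the standard fact that $\Gamma\times K_2$ is connected when $\Gamma$ is connected and non-bipartite; its bipartition is then the unique one, so every automorphism either fixes both parts or swaps them. The layer-fixing automorphisms form the subgroup $H\cong\TFA(\Gamma)$ of \eqref{lem:TF-c}, while the explicit involution $\iota\colon(u,i)\mapsto(u,1-i)$ swaps the parts; thus $H$ has index $2$ and is normal, and $\Aut(\Gamma\times K_2)=H\rtimes\langle\iota\rangle\cong\TFA(\Gamma)\rtimes Z_2$. For the final clause I would locate the expected group $\Aut(\Gamma)\times\Aut(K_2)$ inside this as $\Aut(\Gamma)\rtimes\langle\iota\rangle$, with $\Aut(\Gamma)$ embedded in $\TFA(\Gamma)$ as the diagonal TF-automorphisms; stability of $\Gamma$ then amounts exactly to $\TFA(\Gamma)=\Aut(\Gamma)$, i.e.\ to the nonexistence of a non-diagonal TF-automorphism. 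The only step demanding real care---the main obstacle---is this connectivity input in \eqref{lem:TF-b}, since it is precisely what guarantees the bipartition is canonical and thereby forbids automorphisms that mix the two parts in ways not accounted for by $H$ and $\iota$; once the bipartite structure of $\Gamma\times K_2$ is fixed, the rest is bookkeeping.
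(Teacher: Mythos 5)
Your proof is correct. There is, however, no in-paper argument to compare it against: the paper states this lemma as quoted from the literature (Lemma~3.1 and Theorem~3.2 of the cited work of Lauri, Mizzi and Scapellato) and gives no proof, so your write-up is essentially a reconstruction of the standard argument behind that citation. All three parts check out: in \eqref{lem:TF-c}, a permutation stabilizing $V(\Gamma)\times\{0\}$ automatically stabilizes the complementary layer, and since every edge of $\Gamma\times K_2$ has the form $\{(s,0),(t,1)\}$ with $(s,t)\in A(\Gamma)$, the correspondence $\tau\mapsto(\alpha,\beta)$ is an isomorphism onto $\TFA(\Gamma)$; in \eqref{lem:TF-a}, any element of $\Aut(\Gamma)\times\Aut(K_2)$ stabilizing the layers must have trivial $K_2$-component and hence act diagonally, so a non-diagonal pair yields an unexpected automorphism; and in \eqref{lem:TF-b}, connectivity and non-bipartiteness of $\Gamma$ make $\Gamma\times K_2$ connected with a unique bipartition, so the layer-preserving subgroup $H\cong\TFA(\Gamma)$ has index $2$ and is complemented by the layer swap $\iota$, giving $\Aut(\Gamma\times K_2)=\TFA(\Gamma)\rtimes Z_2$, with $\Aut(\Gamma)\times\Aut(K_2)$ sitting inside as the diagonal subgroup extended by $\iota$. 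Your handling of the final clause of \eqref{lem:TF-b} is exactly the point the paper itself flags in the remark following Lemma~\ref{lem:TF-bipartite}: the equivalence ``unstable iff there is a non-diagonal TF-automorphism'' needs $|\Aut(\Gamma\times K_2)|=2|\TFA(\Gamma)|$, which is precisely what your semidirect decomposition supplies under the connected non-bipartite hypothesis and which can fail in the bipartite case.
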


The equality $|\Aut(\Gamma\times K_2)|=2|\TFA(\Gamma)|$ from Lemma~\ref{lem:TF}\,\eqref{lem:TF-b} might fail for bipartite graphs, as detailed in the next lemma.

\begin{lemma}\label{lem:TF-bipartite}
Let $\Gamma$ be a connected bipartite graph with bipartition $\{U,W\}$.
\begin{enumerate}[\rm(a)]
\item\label{lem:TF-bipartite-a} If $\Gamma$ admits an automorphism swapping $U$ and $W$, then $|\Aut(\Gamma\times K_2)|=4|\TFA(\Gamma)|$.
\item\label{lem:TF-bipartite-b} If $\Gamma$ admits no automorphism swapping $U$ and $W$, then $\Aut(\Gamma\times K_2)=\TFA(\Gamma)\rtimes Z_2$. In particular, in this case, $\Gamma$ is unstable if and only if it admits a non-diagonal TF-automorphism.
\end{enumerate}
\end{lemma}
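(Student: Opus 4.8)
The plan is to pass to the setwise stabilizer of $S_0:=V(\Gamma)\times\{0\}$ in $G:=\Aut(\Gamma\times K_2)$. By Lemma~\ref{lem:TF}\,\eqref{lem:TF-c} this stabilizer $H$ satisfies $H\cong\TFA(\Gamma)$, so by the orbit--stabilizer theorem both claims reduce to computing the size of the $G$-orbit of the \emph{set} $S_0$: statement~\eqref{lem:TF-bipartite-a} amounts to this orbit having size $4$, while~\eqref{lem:TF-bipartite-b} amounts to it having size $2$ together with the normality of $H$. To analyse this orbit I would first record the structure of $\Gamma\times K_2$. Since $\Gamma$ is connected and bipartite, $\Gamma\times K_2$ is the disjoint union of its two connected components
\[
A=(U\times\{0\})\cup(W\times\{1\}),\qquad B=(U\times\{1\})\cup(W\times\{0\}),
\]
each isomorphic to $\Gamma$, with $U\times\{0\}$, $W\times\{1\}$ the two partite sets of $A$ and $U\times\{1\}$, $W\times\{0\}$ those of $B$.

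Every $g\in G$ permutes $\{A,B\}$ and carries partite sets to partite sets, so $g(S_0)$ is a union of one partite set of $A$ and one partite set of $B$; hence the orbit of $S_0$ lies inside the four-element family $\{S_0,\,S_1,\,P,\,Q\}$, where $S_1=V(\Gamma)\times\{1\}$, $P=U\times\{0,1\}$ and $Q=W\times\{0,1\}$. The swap $\kappa\colon(v,i)\mapsto(v,1-i)$ lies in $\Aut(\Gamma)\times\Aut(K_2)\le G$, has order $2$, and sends $S_0\mapsto S_1$, so the orbit always has size at least $2$. In case~\eqref{lem:TF-bipartite-a}, a partite-set-swapping automorphism of $\Gamma$ yields $g\in G$ acting as the identity on $A$ and as a partite-set-swap on $B$; then $g(S_0)=P$, and symmetrically $Q$ is reached, so the orbit is all of $\{S_0,S_1,P,Q\}$ and $[G:H]=4$, giving $|G|=4|\TFA(\Gamma)|$. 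In case~\eqref{lem:TF-bipartite-b} I would show $P$ and $Q$ are \emph{not} in the orbit: tracing how any $g$ sending $S_0$ to $P$ (or $Q$) must act on the partite sets of $A$ and $B$ forces, after the identifications $A\cong\Gamma\cong B$, an automorphism of $\Gamma$ interchanging $U$ and $W$, which does not exist by hypothesis. Thus the orbit is $\{S_0,S_1\}$, so $[G:H]=2$; then $H$ is normal of index $2$ and $\langle\kappa\rangle$ is a complement with $\langle\kappa\rangle\cap H=1$, whence $G=H\rtimes\langle\kappa\rangle\cong\TFA(\Gamma)\rtimes Z_2$.

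For the ``in particular'' clause of~\eqref{lem:TF-bipartite-b}, I would compare orders: by definition $\Gamma$ is stable exactly when $\Aut(\Gamma\times K_2)=\Aut(\Gamma)\times\Aut(K_2)$, and since the latter is a subgroup of order $2|\Aut(\Gamma)|$ while $|G|=2|\TFA(\Gamma)|$ in this case, stability is equivalent to $|\Aut(\Gamma)|=|\TFA(\Gamma)|$, i.e.\ to $\TFA(\Gamma)=\Aut(\Gamma)$, i.e.\ to every TF-automorphism being diagonal; negating gives the stated equivalence. The main obstacle is the case~\eqref{lem:TF-bipartite-b} argument ruling out $P,Q$ from the orbit: one must keep careful track of whether $g$ preserves or swaps the two components and of the induced bijections between the four partite sets, and correctly translate each possibility into the (non-)existence of a partite-set-swapping automorphism of $\Gamma$; the remaining steps are then routine orbit--stabilizer and order bookkeeping.
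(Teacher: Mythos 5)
Your proposal is correct and follows essentially the same route as the paper: both identify the stabilizer of $V(\Gamma)\times\{0\}$ with $\TFA(\Gamma)$ via Lemma~\ref{lem:TF}\,\eqref{lem:TF-c}, exploit the decomposition of $\Gamma\times K_2$ into two components isomorphic to $\Gamma$ with the four invariant sets $U\times\{0\}$, $W\times\{0\}$, $U\times\{1\}$, $W\times\{1\}$, and compute the index of that stabilizer by a case analysis on whether an automorphism of $\Gamma$ swaps $U$ and $W$. Your orbit--stabilizer computation on the subset $V(\Gamma)\times\{0\}$ is just the paper's index computation $|X:Y|=|\overline{X}:\overline{Y}|$ on the induced block system, phrased differently.
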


\begin{proof}
Denote $\Pi=\Gamma\times K_2$ and $X=\Aut(\Pi)$, and for $i\in\{0,1\}=V(K_2)$, let $U_i=U\times\{i\}$ and $W_i=W\times\{i\}$. By Lemma~\ref{lem:TF}\,\eqref{lem:TF-c}, the stabilizer $Y:=X_{U_0\cup W_0}$ is isomorphic to $\TFA(\Gamma)$. Observe that, since $\Gamma$ is connected and bipartite, $\Pi$ has precisely two connected components, and they are the induced subgraphs of $\Pi$ on $U_0\cup W_1$ and $U_1\cup W_0$ respectively. Denote these two subgroups by $\Pi_+$ and $\Pi_-$  respectively. Then $\Pi_+\cong\Pi_-\cong\Gamma$, and
\begin{equation}\label{eq:Aut-bipartite-K2}
X=(\Aut(\Pi_+)\times\Aut(\Pi_-))\rtimes\langle\tau\rangle=\Aut(\Pi)\wr Z_2,
\end{equation}
where $\tau$ is the involution swapping $(v,0)$ and $(v,1)$ for each $v\in V(\Gamma)$. It follows that $\overline{V}:=\{U_0,W_0,U_1,W_1\}$ is an $X$-invariant partition of $V(\Pi)$. Let $\overline{\Pi}$ be the quotient graph of $\Pi$ induced by $\overline{V}$, as in Figure~\ref{fig1}, and let $\overline{X}$ and $\overline{Y}$ be the induced permutation group on $\overline{V}$ by $X$ and $Y$ respectively. Then $\overline{X}\leq\Aut(\overline\Pi)$, and $\overline{Y}=\overline{X}_{\{U_0,W_0\}}$ such that $|X:Y|=|\overline{X}:\overline{Y}|$.

\begin{figure}[h]
\begin{center}
\caption{The graph $\overline{\Pi}$}\label{fig1}
\medskip
\begin{tikzpicture}[every node/.style={font=\small}, line width=0.8pt, scale=0.7]
\fill (-1.2,1.2) circle (0.08) node[left]  {$U_0$};
\fill ( 1.2,1.2) circle (0.08) node[right] {$U_1$};
\fill (-1.2,-1.2) circle (0.08) node[left]  {$W_0$};
\fill ( 1.2,-1.2) circle (0.08) node[right] {$W_1$};
\draw (-1.2,1.2) -- ( 1.2,-1.2);
\draw ( 1.2,1.2) -- (-1.2,-1.2);
\end{tikzpicture}
\end{center}
\end{figure}

If $\Gamma$ admits an automorphism swapping $U$ and $W$, then~\eqref{eq:Aut-bipartite-K2} implies that $\overline{X}=Z_2\wr Z_2$, and $\overline{Y}=\overline{X}_{\{U_0,W_0\}}$ is generated by the involution swapping $U_0$ and $W_0$ as well as swapping $U_1$ and $W_1$.
In this case, $|X:Y|=|\overline{X}:\overline{Y}|=4$, and so $|X|=4|Y|=4|\TFA(\Gamma)|$, as part~\eqref{lem:TF-bipartite-a} states.
Now assume that $\Gamma$ admits no automorphism swapping $U$ and $W$. Then $\Aut(\Pi_+)$ stabilizes both $U_0$ and $W_1$, while $\Aut(\Pi_-)$ stabilizes both $U_1$ and $W_0$. Thus, by~\eqref{eq:Aut-bipartite-K2}, $\overline{X}=\langle\overline\tau\rangle$, where $\overline\tau$ is the permutation on $\overline{V}$ induced by $\tau$ (that is, $\overline\tau$ swaps $U_0$ and $U_1$ as well as swaps $W_0$ and $W_1$), and $\overline{Y}=1$. It follows that $|X:Y|=|\overline{X}:\overline{Y}|=2$, and
\[
X=Y\rtimes\langle\tau\rangle=\TFA(\Gamma)\rtimes Z_2,
\]
which proves part~\eqref{lem:TF-bipartite-b}.
\end{proof}

\begin{remark}
Only under the condition $|\Aut(\Gamma\times K_2)|=2|\TFA(\Gamma)|$ can we conclude that $\Gamma$ is unstable if and only if it admits a non-diagonal TF-automorphism. Note that this condition is omitted in the statement of~\cite[Theorem~3.2]{LMS2015}, and that the condition may fail---for example, when $\Gamma$ is a connected bipartite graph satisfying the assumption of Lemma~\ref{lem:TF-bipartite}\,\eqref{lem:TF-bipartite-a}.
\end{remark}

TF-automorphisms are naturally related to Cartesian skeletons, as stated in the next lemma.

\begin{lemma}\label{TF-Aut}
For a TF-automorphism $(\alpha,\beta)$ of a graph $\Gamma$, the following statements hold:
\begin{enumerate}[{\rm (a)}]
\item\label{TF-Aut-a} Both $\alpha$ and $\beta$ are automorphisms of $\Sy(\Gamma)$.
\item\label{TF-Aut-b} If $\Gamma$ is connected and bipartite, then $\alpha$ and $\beta$ either both stabilize or both interchange the two bipartite sets.
\end{enumerate}
\end{lemma}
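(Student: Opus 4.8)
The plan is to convert the defining property~\eqref{eq:TF} into two statements describing how $\alpha$ and $\beta$ transport neighborhoods, and then feed these into the purely set-theoretic definitions of $B(\Gamma)$ and $S(\Gamma)$. First I would record the identities
\[
N_\Gamma(s)^\beta=N_\Gamma(s^\alpha)\quad\text{and}\quad N_\Gamma(s)^\alpha=N_\Gamma(s^\beta)\qquad(s\in V(\Gamma)),
\]
both of which follow at once from~\eqref{eq:TF} together with the fact that $\Gamma$ is undirected, so that $(s,t)\in A(\Gamma)\Leftrightarrow(t,s)\in A(\Gamma)$. The same symmetry shows that $(\beta,\alpha)$ is again a TF-automorphism (and so is $(\alpha^{-1},\beta^{-1})$, as $\TFA(\Gamma)$ is a group). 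Consequently, for part~\eqref{TF-Aut-a} it suffices to prove that $\alpha\in\Aut(S(\Gamma))$; applying the conclusion to the TF-automorphism $(\beta,\alpha)$ then yields the corresponding statement for $\beta$.

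To show $\alpha\in\Aut(S(\Gamma))$ I would argue in two steps. Since $\beta$ is a bijection, $N_\Gamma(u^\alpha)\cap N_\Gamma(v^\alpha)=(N_\Gamma(u)\cap N_\Gamma(v))^\beta$, which is nonempty precisely when $N_\Gamma(u)\cap N_\Gamma(v)$ is, and moreover $u\neq v$ iff $u^\alpha\neq v^\alpha$; hence $\alpha$ preserves the edge set of $B(\Gamma)$ and so $\alpha\in\Aut(B(\Gamma))$. Second, I would verify that $\alpha$ preserves dispensability with respect to $\Gamma$: every strict inclusion and every intersection occurring in Definition~\ref{def:skeleton} is carried by the bijection $\beta$ to the corresponding relation among the neighborhoods of $u^\alpha,v^\alpha,w^\alpha$, so a vertex $w$ witnesses the dispensability of $\{u,v\}$ if and only if $w^\alpha$ witnesses that of $\{u^\alpha,v^\alpha\}$ (the converse direction invoking the TF-automorphism $(\alpha^{-1},\beta^{-1})$). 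Thus $\alpha$ permutes the dispensable edges of $B(\Gamma)$ among themselves, whence it fixes setwise the edge set of $S(\Gamma)=B(\Gamma)$ minus the dispensable edges, as required.

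For part~\eqref{TF-Aut-b}, by part~\eqref{TF-Aut-a} both $\alpha$ and $\beta$ lie in $\Aut(S(\Gamma))$, and by Lemma~\ref{le:lecsb}\,\eqref{it:itcsbb} the graph $S(\Gamma)$ has exactly two connected components, with vertex sets $U$ and $W$. As an automorphism permutes connected components, each of $\alpha$ and $\beta$ either fixes both $U$ and $W$ setwise or interchanges them, and it remains only to show they make the same choice. Choosing any $u\in U$ (so that $N_\Gamma(u)\subseteq W$ is nonempty, since $\Gamma$ is connected with an edge), the identity $N_\Gamma(u^\alpha)=N_\Gamma(u)^\beta$ places a common nonempty set on both sides: the left-hand side is contained in $W$ or in $U$ according to whether $\alpha$ fixes or swaps the parts, while the right-hand side is contained in $W^\beta$, which equals $W$ or $U$ according to whether $\beta$ fixes or swaps. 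If $\alpha$ and $\beta$ behaved oppositely, this nonempty set would lie in $U\cap W=\varnothing$, a contradiction; hence $\alpha$ and $\beta$ act identically on $\{U,W\}$.

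I expect the main obstacle to be the dispensability bookkeeping in the second step of part~\eqref{TF-Aut-a}: one must run through the disjunctive clauses of Definition~\ref{def:skeleton} and confirm that each survives pushforward by $\beta$, in both directions of the equivalence. This is conceptually routine, since each clause is a Boolean combination of strict inclusions and intersection-nonemptiness among neighborhoods, all of which a bijection respects; the identity $N_\Gamma(x)^\beta=N_\Gamma(x^\alpha)$ is exactly what makes this verification go through uniformly, but it demands care to treat both disjuncts of each condition.
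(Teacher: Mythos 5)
Your proposal is correct. The main divergence from the paper is in part~(a): the paper does not prove it at all, but simply cites \cite[Lemma~4.2(c)]{QXZ2019}, whereas you give a self-contained verification. Your argument is sound: the transport identity $N_\Gamma(s)^\beta=N_\Gamma(s^\alpha)$ (with its mirror $N_\Gamma(s)^\alpha=N_\Gamma(s^\beta)$, obtained from the undirectedness of $\Gamma$, which also shows $(\beta,\alpha)\in\TFA(\Gamma)$) is exactly what is needed, since every clause in the definition of $B(\Gamma)$ and of dispensability is a Boolean combination of intersection-nonemptiness and strict inclusions among neighborhoods, and the bijection $\beta$ preserves all of these in both directions; hence $\alpha$ preserves $E(B(\Gamma))$ and permutes the dispensable edges, so it preserves $E(S(\Gamma))$. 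What your approach buys is independence from the external reference (essentially reproving the cited lemma); what it costs is the bookkeeping you yourself flag, which the paper avoids by citation. For part~(b) the two arguments are essentially the same: both deduce from part~(a) that $\alpha$ and $\beta$ each preserve the partition $\{U,W\}$ (you route this through the components of $S(\Gamma)$ via Lemma~\ref{le:lecsb}, the paper states it directly), and both derive a contradiction from a mismatched action by observing that an edge would be sent to a pair inside one part. Your version, phrased through $N_\Gamma(u^\alpha)=N_\Gamma(u)^\beta$, has the minor advantage of treating the two mismatched cases ($\alpha$ fixes/$\beta$ swaps and vice versa) symmetrically in one stroke, whereas the paper explicitly handles only one of them (leaving the other to implicit symmetry, e.g.\ via the TF-automorphism $(\beta,\alpha)$).
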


\begin{proof}
Statement~\eqref{TF-Aut-a} follows from~\cite[Lemma~4.2(c)]{QXZ2019}. Now let $\Gamma$ be connected and bipartite, with bipartition $\{U,W\}$, say. We deduce from~\eqref{TF-Aut-a} that $\alpha$ and $\beta$ both preserve the partition $\{U,W\}$. To prove statement~\eqref{TF-Aut-b}, suppose for a contradiction that $\alpha$ stabilizes both $U$ and $W$ whereas $\beta$ swaps $U$ and $W$. For any $\{u,w\}\in E(\Gamma)$, where $u\in U$ and $w\in W$, it follows that $u^\alpha$ and $v^\beta$ are either both in $U$ or both in $W$. However, this implies that $u^\alpha$ and $v^\beta$ are not adjacent in $\Gamma$, contradicting the assumption that $(\alpha,\beta)$ is a TF-automorphism of $\Gamma$.
\end{proof}

We conclude this section with the following result, which is the key to the proof of Theorem~\ref{thm3}\,\eqref{thm3b}.

\begin{lemma}\label{lem:TFA}
Let $\Gamma$ be a connected twin-free non-bipartite graph, and let $\Sigma$ be a connected twin-free bipartite graph with parts $U$ and $W$, such that $S(\Gamma)$ is Cartesian-coprime to each connected component of $S(\Sigma)$, and there exists an automorphism of $\Sigma$ interchanging $U$ and $W$. Then for each TF-automorphism $(\alpha,\beta)$ of $\Gamma\times\Sigma$, there exist TF-automorphisms $(\alpha_+,\beta_-)$ and $(\alpha_-,\beta_+)$ of $\Gamma$ and TF-automorphism $(\alpha_0,\beta_0)$ of $\Sigma$ such that
\begin{align*}
\alpha|_{V(\Gamma)\times U}=(\alpha_+,\alpha_0)|_{V(\Gamma)\times U},&\ \
\alpha|_{V(\Gamma)\times W}=(\alpha_-,\alpha_0)|_{V(\Gamma)\times W},\\
\beta|_{V(\Gamma)\times U}=(\beta_+,\beta_0)|_{V(\Gamma)\times U},&\ \
\beta|_{V(\Gamma)\times W}=(\beta_-,\beta_0)|_{V(\Gamma)\times W}.
\end{align*}
\end{lemma}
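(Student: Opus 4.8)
The plan is to reduce the decomposition of $(\alpha,\beta)$ to the structure of $\Aut(S(\Gamma\times\Sigma))$ and then to a product factorization of the arc set of $\Gamma\times\Sigma$. First I would record the ambient structure. Since $\Gamma$ and $\Sigma$ are twin-free and (being connected) have no isolated vertices, Lemma~\ref{le:lecsb}\,\eqref{le:lecsd} gives $S(\Gamma\times\Sigma)=S(\Gamma)\Bbox S(\Sigma)$. By Lemma~\ref{le:lecsb}\,\eqref{it:itcsba} the factor $S(\Gamma)$ is connected, while by Lemma~\ref{le:lecsb}\,\eqref{it:itcsbb} the graph $S(\Sigma)$ has exactly two connected components $S_+(\Sigma)$ and $S_-(\Sigma)$, on $U$ and $W$. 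Writing $X=V(\Gamma)\times U$ and $Y=V(\Gamma)\times W$, the two components of $S(\Gamma\times\Sigma)$ are $S(\Gamma)\Bbox S_+(\Sigma)$ and $S(\Gamma)\Bbox S_-(\Sigma)$, with vertex sets $X$ and $Y$; moreover $\Gamma\times\Sigma$ is connected (as $\Gamma$ is connected non-bipartite and $\Sigma$ is connected) and bipartite with bipartition $\{X,Y\}$. By Lemma~\ref{TF-Aut}\,\eqref{TF-Aut-a} both $\alpha$ and $\beta$ lie in $\Aut(S(\Gamma\times\Sigma))$, and by Lemma~\ref{TF-Aut}\,\eqref{TF-Aut-b} they either both stabilize $X$ and $Y$ or both interchange them. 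I would treat these two cases.

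The main case is when $\alpha$ and $\beta$ both stabilize $X$ and $Y$. Here I would exploit the Cartesian-coprimality hypothesis: comparing the Cartesian-prime factorizations of the connected graphs $S(\Gamma)$ and $S_+(\Sigma)$, coprimality means they share no prime factor, so Lemma~\ref{thm:cartesian}\,\eqref{thm:cartesiana} and~\eqref{thm:cartesianb} yield $\Aut(S(\Gamma)\Bbox S_+(\Sigma))=\Aut(S(\Gamma))\times\Aut(S_+(\Sigma))$ acting coordinatewise, and likewise for $S_-(\Sigma)$. Restricting $\alpha$ to $X$ and to $Y$ therefore produces $\alpha_+,\alpha_-\in\Aut(S(\Gamma))$ together with permutations of $U$ and of $W$; gluing the latter gives a single permutation $\alpha_0$ of $V(\Sigma)$ preserving $U$ and $W$ with $\alpha|_X=(\alpha_+,\alpha_0)|_X$ and $\alpha|_Y=(\alpha_-,\alpha_0)|_Y$. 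Doing the same for $\beta$ defines $\beta_+,\beta_-,\beta_0$. It then remains to verify the TF-conditions.

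The verification is a rectangle argument. Since $\Gamma\times\Sigma$ is bipartite with parts $X,Y$, every arc runs from $X$ to $Y$ or from $Y$ to $X$, and the $X\to Y$ arcs are exactly the pairs $((u,i),(v,j))$ with $(u,v)\in A(\Gamma)$ and $(i,j)\in A(\Sigma)$, $i\in U$, $j\in W$. The map $(\alpha,\beta)$ sends such an arc to $((u^{\alpha_+},i^{\alpha_0}),(v^{\beta_-},j^{\beta_0}))$; that is, it acts as the product of the bijection $(u,v)\mapsto(u^{\alpha_+},v^{\beta_-})$ on $V(\Gamma)^2$ and the bijection $(i,j)\mapsto(i^{\alpha_0},j^{\beta_0})$ on $U\times W$. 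The TF-property says this product bijection maps the product set $A(\Gamma)\times\{(i,j)\in A(\Sigma):i\in U,\,j\in W\}$ onto itself; as both factors are nonempty (using $\Gamma$ non-bipartite and $\Sigma$ connected bipartite), projecting onto each coordinate forces each factor bijection to preserve its own factor set. This gives precisely that $(\alpha_+,\beta_-)$ is a TF-automorphism of $\Gamma$ and that $(\alpha_0,\beta_0)$ satisfies the TF-condition on arcs from $U$ to $W$. Repeating with the $Y\to X$ arcs yields that $(\alpha_-,\beta_+)$ is a TF-automorphism of $\Gamma$ and completes the TF-condition for $(\alpha_0,\beta_0)$ on all of $\Sigma$. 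I would stress that the crossed pairings $(\alpha_+,\beta_-)$ and $(\alpha_-,\beta_+)$ arise precisely because arcs of the bipartite graph $\Gamma\times\Sigma$ join the two halves $X$ and $Y$, and getting this bookkeeping right is the main delicate point.

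Finally, the case where $\alpha$ and $\beta$ interchange $X$ and $Y$ I would reduce to the previous one. Let $\delta\in\Aut(\Sigma)$ interchange $U$ and $W$ (this is where that hypothesis enters), and set $\gamma=(\id,\delta)\in\Aut(\Gamma\times\Sigma)$, which interchanges $X$ and $Y$, so that $(\gamma,\gamma)$ is a diagonal TF-automorphism. The product $(\alpha,\beta)(\gamma,\gamma)=(\alpha\gamma,\beta\gamma)$ is again a TF-automorphism of $\Gamma\times\Sigma$, and now both $\alpha\gamma$ and $\beta\gamma$ stabilize $X$ and $Y$, so the stabilizing case applies and decomposes it, say with data $\alpha_+',\alpha_-',\beta_+',\beta_-',\alpha_0',\beta_0'$. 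Multiplying back by $(\gamma,\gamma)^{-1}=(\gamma^{-1},\gamma^{-1})$ with $\gamma^{-1}=(\id,\delta^{-1})$ returns a decomposition of $(\alpha,\beta)$: the $\Gamma$-parts are unchanged, while the $\Sigma$-part becomes $\alpha_0:=\alpha_0'\delta^{-1}$ and $\beta_0:=\beta_0'\delta^{-1}$, which is again a TF-automorphism of $\Sigma$ because $\TFA(\Sigma)$ is a group containing the diagonal TF-automorphism $(\delta^{-1},\delta^{-1})$. The requirement that the swap be realized inside $\Aut(\Sigma)$ rather than merely $\Aut(S(\Sigma))$ (as produced by Lemma~\ref{claim1}) is essential here, since only then is $(\gamma,\gamma)$ a genuine TF-automorphism of $\Gamma\times\Sigma$.
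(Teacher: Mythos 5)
Your proposal is correct and takes essentially the same route as the paper's own proof: both pass to the skeleton $S(\Gamma\times\Sigma)=S(\Gamma)\Bbox S(\Sigma)$ via Lemma~\ref{TF-Aut}, use Cartesian-coprimality and Lemma~\ref{thm:cartesian} to split $\Aut(S(\Gamma)\Bbox S_\pm(\Sigma))$ as $\Aut(S(\Gamma))\times\Aut(S_\pm(\Sigma))$, handle the component-swapping case by composing with the diagonal TF-automorphism $((\id,\sigma),(\id,\sigma))$ for $\sigma\in\Aut(\Sigma)$ interchanging $U$ and $W$ (and undoing it at the end), and extract the TF-conditions for the factors from the TF-condition on $\Gamma\times\Sigma$. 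The only differences are presentational: the paper treats the two cases uniformly by defining $\alpha'=\alpha(\id,\sigma)$ at the outset and verifies the factor TF-conditions by fixing an edge in one factor and varying the other, whereas you split the cases and phrase that verification as a product-set projection argument---the underlying computations coincide.
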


\begin{proof}
Let $\Pi=\Gamma\times\Sigma$. By Lemma~\ref{le:lecsb}, $S(\Sigma)$ has precisely two connected components, say $S_+(\Sigma)$ and $S_-(\Sigma)$, with vertex sets $U$ and $W$ respectively, and $S(\Pi)=S(\Gamma)\Bbox S(\Sigma)$ has precisely two connected components
\begin{equation}\label{decomposition}
S_+(\Pi)=S(\Gamma)\Bbox S_+(\Sigma)\ \text{ and }\ S_-(\Pi)=S(\Gamma)\Bbox S_-(\Sigma).
\end{equation}
Since $S(\Gamma)$ is Cartesian-coprime to $S_+(\Sigma)$ and $S_-(\Sigma)$, we conclude by Lemma~\ref{thm:cartesian} that
\begin{equation}\label{eqS0}
\Aut( S_+(\Pi))=\Aut(S(\Gamma))\times\Aut( S_+(\Sigma)),
\end{equation}
\begin{equation}\label{eqS1}
\Aut( S_-(\Pi))=\Aut(S(\Gamma))\times\Aut( S_-(\Sigma)).
\end{equation}
Let $V=V(\Gamma)$, $X=V\times U$ and $Y=V\times W$. By Lemma~\ref{TF-Aut}, $\alpha$ and $\beta$ are automorphisms of $S(\Pi)$ such that either they both stabilize $X$ or they both interchange $X$ and $Y$. According to these two distinct cases, we construct automorphisms of $S(\Pi)$ stabilizing $X$ in the following two paragraphs.

First assume that $\alpha$ and $\beta$ stabilize $X$. Take $\sigma$ to be the identity permutation on $V(\Sigma)$,
\[
\alpha'=\alpha(\id,\sigma)=\alpha\ \text{ and }\ \beta'=\beta(\id,\sigma)=\beta,
\]
where $\id$ is the identity permutation on $V$. Then both $\alpha'$ and $\beta'$ are automorphisms of $S(\Pi)$ that stabilize $X$.

Now assume that $\alpha$ and $\beta$ swap $X$ and $Y$. Since $\alpha\in\Aut(S(\Pi))$, we obtain $S_+(\Pi)\cong S_-(\Pi)$. This together with~\eqref{decomposition} and Lemma~\ref{thm:cartesian}\,\eqref{thm:cartesiand} implies that $ S_+(\Sigma)\cong S_-(\Sigma)$. Take $\sigma$ to be an automorphism  of $\Aut(\Sigma)$ swapping $S_+(\Sigma)$ and $S_-(\Sigma)$,
\[
\alpha'=\alpha(\id,\sigma)\ \text{ and }\ \beta'=\beta(\id,\sigma),
\]
where $\id$ is the identity permutation on $V$. Then $\alpha'$ and $\beta'$ are automorphisms of $S(\Pi)$ stabilizing $X$.

To sum up the above two paragraphs, we have constructed automorphisms $\alpha'=\alpha(\id,\sigma)$ and $\beta'=\beta(\id,\sigma)$ of $S(\Pi)$ that stabilize $X$. It follows that $\alpha'$ and $\beta'$ induce automorphisms of $S_+(\Pi)$ and $S_-(\Pi)$, and so by~\eqref{eqS0} and~\eqref{eqS1}, there exist $\alpha_+,\alpha_-,\beta_+,\beta_-\in\Sym(V)$ and $\overline{\alpha},\overline{\beta}\in\Sym(V(\Sigma))$ such that
\begin{equation}\label{aut012}
\alpha'|_X=(\alpha_+, \overline{\alpha})|_X,\ \ \alpha'|_Y=(\alpha_-, \overline{\alpha})|_Y,\ \
\beta'|_X=(\beta_+, \overline{\beta})|_X,\ \ \beta'|_Y=(\beta_-, \overline{\beta})|_Y.
\end{equation}
Since $(\alpha,\beta)\in\TFA(\Pi)$ and $(\id,\sigma)\in\Aut(\Pi)$, we have $(\alpha',\beta')\in\TFA(\Pi)$.
For any fixed $\{u,w\}\in E(\Sigma)$ with $u\in U$ and $w\in W$,
\begin{align}
((s,u),(t,w))\in A(\Pi)&\,\Leftrightarrow\,((s,u)^{\alpha'},(t,w)^{\beta'})\in A(\Pi)
\,\Leftrightarrow\,((s^{\alpha_+},u^{\overline{\alpha}}),(t^{\beta_-},w^{\overline{\beta}}))\in A(\Pi),\label{eq:Qin1}\\
((s,w),(t,u))\in A(\Pi)&\,\Leftrightarrow\,((s,w)^{\alpha'},(t,u)^{\beta'})\in A(\Pi)
\,\Leftrightarrow\,((s^{\alpha_-},w^{\overline{\alpha}}),(t^{\beta_+},u^{\overline{\beta}}))\in A(\Pi)\label{eq:Qin2}.
\end{align}
Thus, for each $(s,t)\in A(\Gamma)$, both $(s^{\alpha_+},t^{\beta_-})$ and $(s^{\alpha_-},t^{\beta_+})$ are arcs of $\Gamma$. This shows that $(\alpha_+,\beta_-)$ and $(\alpha_-,\beta_+)$ are TF-automorphisms of $\Gamma$. Fixing any $\{s,t\}\in E(\Gamma)$, the relations~\eqref{eq:Qin1} and~\eqref{eq:Qin2} also indicate that $(i,j)\in A(\Sigma)\Rightarrow(i^{\overline{\alpha}},j^{\overline{\beta}})\in A(\Sigma)$, and so $(\overline{\alpha},\overline{\beta})$ is a TF-automorphism of $\Sigma$. Let $\alpha_0=\overline{\alpha}\sigma^{-1}$ and $\beta_0=\overline{\beta}\sigma^{-1}$.
Then $(\alpha_0,\beta_0)$ is a TF-automorphism of $\Sigma$.
Moreover, since $\alpha=\alpha'(\id,\sigma^{-1})$ and $\beta=\beta'(\id,\sigma^{-1})$, we derive from~\eqref{aut012} that
\[
\alpha|_X=(\alpha_+,\alpha_0)|_X,\ \ \alpha|_Y=(\alpha_-,\alpha_0)|_Y,\ \ \beta|_X=(\beta_+,\beta_0)|_X,\ \ \beta|_Y=(\beta_-,\beta_0)|_Y.
\]
This completes the proof.
\end{proof}

\section{Partitions in Cartesian product of vertex sets}\label{sec6}

For sets $U$ and $W$, the \emph{$U$-partition} and \emph{$W$-partition} of $U\times W$ are defined as the partitions
\[
\{\{u\}\times W:u\in U\}\ \text{ and }\ \{U\times \{w\}:w\in W\},
\]
respectively.

\begin{lemma}[{\cite[Lemma~3.1]{GLX2025}}]\label{le:leggd}
A nonempty graph $\Gamma$ is isomorphic to $\Delta\times\Sigma$ for some graphs $\Delta$ and $\Sigma$ if and only if $V(\Gamma)$ has partitions $\mathcal{P}$ and $\mathcal{L}$ such that the following conditions hold:
\begin{enumerate}[\rm(a)]
\item\label{it:itggda} $|P\cap L|=1$ for all $P\in\mathcal{P}$ and $L\in\mathcal{L}$;
\item\label{it:itggdb} for $P_u, P_v\in \mathcal{P}$ and $L_i,L_j\in\mathcal{L}$, if there exist edges of $\Gamma$ from $P_u$ to $P_v$ and $L_i$ to $L_j$, then there exists an edge of $\Gamma$ from $P_u\cap L_i$ to $P_v\cap L_j$.
\end{enumerate}
Moreover, if conditions~\eqref{it:itggda} and~\eqref{it:itggdb} hold, then $\Delta$ can be taken as the graph with vertex set $\mathcal{P}$ such that $\{P_u,P_v\}\in E(\Delta)$ whenever there exists an edge of $\Gamma$ between $P_u$ and $P_v$, and $\Sigma$ can be taken as the graph with vertex set $\mathcal{L}$ such that $\{L_i,L_j\}\in E(\Sigma)$ whenever there exists an edge of $\Gamma$ between $L_i$ and $L_j$.
\end{lemma}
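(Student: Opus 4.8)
The plan is to prove the biconditional by exhibiting an explicit vertex bijection and checking that it respects adjacency, with condition~\eqref{it:itggda} guaranteeing the bijection and condition~\eqref{it:itggdb} supplying the only non-automatic half of the adjacency correspondence. The underlying observation is that~\eqref{it:itggdb} is precisely the ``if both coordinate-edges exist then the product-edge exists'' direction built into the definition of $\times$, while its converse is forced by the very definitions of $\Delta$ and $\Sigma$.

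For the forward direction, suppose $\Gamma\cong\Delta\times\Sigma$ and identify $V(\Gamma)$ with $V(\Delta)\times V(\Sigma)$. I would take $\mathcal{P}$ to be the $V(\Delta)$-partition $\{\{d\}\times V(\Sigma):d\in V(\Delta)\}$ and $\mathcal{L}$ to be the $V(\Sigma)$-partition $\{V(\Delta)\times\{s\}:s\in V(\Sigma)\}$. Condition~\eqref{it:itggda} is immediate, since $(\{d\}\times V(\Sigma))\cap(V(\Delta)\times\{s\})=\{(d,s)\}$. For~\eqref{it:itggdb}, an edge of $\Gamma$ between the $\mathcal{P}$-parts indexed by $d$ and $d'$ forces $d$ adjacent to $d'$ in $\Delta$ by the definition of the direct product, and likewise an edge between the $\mathcal{L}$-parts indexed by $s$ and $s'$ forces $s$ adjacent to $s'$ in $\Sigma$; then $(d,s)$ is adjacent to $(d',s')$ in $\Delta\times\Sigma$, which is exactly an edge between the singleton intersections.

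For the converse, assume~\eqref{it:itggda} and~\eqref{it:itggdb} and define $\Delta$ and $\Sigma$ as in the ``moreover'' clause. Condition~\eqref{it:itggda} says every vertex lies in exactly one $P\in\mathcal{P}$ and one $L\in\mathcal{L}$ and that each pair meets in a single vertex, so the map $\phi\colon V(\Gamma)\to\mathcal{P}\times\mathcal{L}=V(\Delta\times\Sigma)$ sending $x$ to the unique $(P,L)$ with $x\in P\cap L$ is a bijection. To see $\phi$ is an isomorphism, write $x=P_u\cap L_i$ and $y=P_v\cap L_j$. If $x$ is adjacent to $y$ in $\Gamma$, then $\{x,y\}$ is simultaneously an edge between $P_u$ and $P_v$ and an edge between $L_i$ and $L_j$, so $P_u$ is adjacent to $P_v$ in $\Delta$ and $L_i$ is adjacent to $L_j$ in $\Sigma$, i.e.\ $\phi(x)$ is adjacent to $\phi(y)$. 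Conversely, if $\phi(x)$ is adjacent to $\phi(y)$, then by construction there is an edge of $\Gamma$ between $P_u$ and $P_v$ and one between $L_i$ and $L_j$, so~\eqref{it:itggdb} produces an edge between $P_u\cap L_i=\{x\}$ and $P_v\cap L_j=\{y\}$, which can only be $\{x,y\}$; hence $x$ is adjacent to $y$.

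I do not anticipate a serious obstacle here: the proof is essentially the bookkeeping of the bijection followed by the translation of~\eqref{it:itggdb} into the nontrivial implication. The only points requiring care are the loop cases, where $P_u=P_v$ or $L_i=L_j$ and possibly $x=y$, so that the relevant ``edge'' is a singleton and ``an edge of $\Gamma$ from $P_u$ to $P_v$'' must be read as an edge with both endpoints in the named parts; the arguments above apply verbatim in these cases, and they also match the convention under which loops in $\Delta$ and $\Sigma$ arise from loops of $\Gamma$ inside a single part.
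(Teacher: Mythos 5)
Your proof is correct: the forward direction with the two coordinate partitions, the bijection $\phi$ in the converse, and the explicit treatment of loops (edges read as singletons when $P_u=P_v$ or $L_i=L_j$) all check out, and your construction matches the ``moreover'' clause exactly. Note that this paper does not prove the lemma itself but imports it as \cite[Lemma~3.1]{GLX2025}; your argument is the standard one and is essentially the proof given in that cited source, so there is nothing further to compare.
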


The following lemma investigates the decomposition of a graph into direct product factors, one of which is a cycle.

\begin{lemma}\label{le:ledc}
Let $\Gamma$ be a connected graph, and let $n\geq 3$ be an integer. Then $\Gamma\cong\Delta\times C_n$ for some graph $\Delta$ if and only if $V(\Gamma)$ has a partition $\{L_1,\ldots,L_n\}$ into independent sets and a partition $\mathcal{P}$ such that the following conditions hold:
\begin{enumerate}[\rm(a)]
\item\label{it:itdca} $|P\cap L_i|=1$ for all $P\in\mathcal{P}$ and $i\in\{1,\ldots,n\}$;
\item\label{it:itdcb} for $P_u, P_v\in\bcp$ and $i,j\in \{1,\ldots,n\}$, if there exist edges of $\Gamma$ from $P_u$ to $P_v$ and $L_i$ to $L_j$, then there exists an edge of $\Gamma$ from $P_u\cap L_i$ to $P_v\cap L_j$;
\item\label{it:itdcc} for each given $P_u$ and $P_v$ in $\mathcal{P}$, the number $|N_\Gamma(x)\cap P_v|$ is a constant $d_{u,v}$ for all $x\in P_u$ such that $d_{u,v}\in\{0,2\}$.
\end{enumerate}
\end{lemma}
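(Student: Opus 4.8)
The plan is to derive both directions from the general direct-product criterion in Lemma~\ref{le:leggd}, specialized to the factor $\Sigma=C_n$. The point is that conditions~\eqref{it:itdca} and~\eqref{it:itdcb} here are literally conditions~\eqref{it:itggda} and~\eqref{it:itggdb} of Lemma~\ref{le:leggd} with $\mathcal{L}=\{L_1,\dots,L_n\}$, so that lemma already handles the existence of \emph{some} factorization $\Gamma\cong\Delta\times\Sigma$; the remaining content is to pin the second factor down to $C_n$, which is exactly what the independence of the $L_i$ together with condition~\eqref{it:itdcc} will do.

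For the forward direction, I would assume $\Gamma\cong\Delta\times C_n$ and identify $V(\Gamma)$ with $V(\Delta)\times\{1,\dots,n\}$, writing $L_i=V(\Delta)\times\{i\}$ and $P_u=\{u\}\times\{1,\dots,n\}$ for $u\in V(\Delta)$, with $\mathcal{P}=\{P_u:u\in V(\Delta)\}$. Each $L_i$ is independent because $C_n$ has no loop at $i$, and~\eqref{it:itdca} is immediate. I would check~\eqref{it:itdcb} directly from the definition of the direct product: an edge of $\Gamma$ between $P_u$ and $P_v$ forces $u\sim v$ in $\Delta$, an edge between $L_i$ and $L_j$ forces $i\sim j$ in $C_n$, and together these produce the edge joining $P_u\cap L_i=(u,i)$ to $P_v\cap L_j=(v,j)$. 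Finally~\eqref{it:itdcc} reflects the $2$-regularity of $C_n$: for $x=(u,i)$ the neighbours of $x$ lying in $P_v$ are the $(v,j)$ with $u\sim v$ in $\Delta$ and $j$ a neighbour of $i$ in $C_n$, so their number is $2$ when $u\sim v$ and $0$ otherwise, independent of $i$.

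For the converse, I first invoke Lemma~\ref{le:leggd} with $\mathcal{L}=\{L_1,\dots,L_n\}$, which under~\eqref{it:itdca} and~\eqref{it:itdcb} yields $\Gamma\cong\Delta\times\Sigma$, where $\Sigma$ is the graph on $\{L_1,\dots,L_n\}$ joining $L_i$ to $L_j$ exactly when $\Gamma$ has an edge between them. It then suffices to prove $\Sigma\cong C_n$, that is, that the simple graph $\Sigma$ on $n$ vertices is loopless, connected and $2$-regular. Looplessness is immediate, since each $L_i$ is independent and so $\Gamma$ has no edge inside $L_i$. Connectedness follows from that of $\Gamma$: were $\Sigma$ disconnected, the product $\Delta\times\Sigma$ would split along the components of $\Sigma$ and hence be disconnected. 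For $2$-regularity I would argue through the product structure: for $x=P_u\cap L_i$ and any part $P_v$, the set $N_\Gamma(x)\cap P_v$ is either empty or the image in $P_v$ of the $\Sigma$-neighbours of $L_i$, so its size is $0$ or $\deg_\Sigma(L_i)$; since $|V(\Gamma)|=n|\mathcal{P}|\geq 3$ forces $\Gamma$ to have no isolated vertex, $x$ has a neighbour in some $P_v$, and then~\eqref{it:itdcc} gives $\deg_\Sigma(L_i)=d_{u,v}=2$. A connected, loopless, $2$-regular simple graph on $n\geq 3$ vertices is $C_n$, whence $\Gamma\cong\Delta\times C_n$.

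I expect the only genuinely delicate step to be the extraction of $2$-regularity in the converse: one must correctly read off, via the isomorphism $\Gamma\cong\Delta\times\Sigma$, that the neighbours of a single vertex inside a fixed part $P_v$ are governed solely by $\deg_\Sigma(L_i)$, and then combine this with the ``no isolated vertex'' observation so that~\eqref{it:itdcc} pins the value to $2$ rather than merely to $\{0,2\}$. The remaining ingredients---that a disconnected factor yields a disconnected direct product, and that independence of the $L_i$ excludes loops---are elementary, and condition~\eqref{it:itdcc} is precisely the combinatorial shadow of $C_n$ being $2$-regular, so no further structural input about $\Gamma$ should be needed.
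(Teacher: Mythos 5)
Your proposal is correct and follows essentially the same route as the paper's proof: the forward direction by coordinatizing $V(\Gamma)=V(\Delta)\times V(C_n)$, and the converse by invoking Lemma~\ref{le:leggd} to get $\Gamma\cong\Delta\times\Sigma$ and then showing $\Sigma$ is loopless, connected, and $2$-regular (the paper gets $2$-regularity by picking an edge $\{P_u,P_v\}$ of $\Delta$ so that $d_{u,v}\neq0$, while you reach the same point via the absence of isolated vertices; both amount to finding some $P_v$ with $d_{u,v}>0$). Just note that your phrase ``$|V(\Gamma)|=n|\mathcal{P}|\geq3$ forces $\Gamma$ to have no isolated vertex'' needs the connectedness hypothesis, not merely the vertex count, but since connectedness is assumed this is a harmless slip.
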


\begin{proof}
Suppose that $\Gamma\cong\Delta\times C_n$ for some graph $\Delta$. Without loss of generality, identify $\Gamma$ with $\Delta\times C_n$. For $u\in V(\Delta)$ and $i\in V(C_n)$, let
\[
P_u=\{(u,j):j\in V(C_n)\}\ \text{ and }\ L_i=\{(v,i): v\in V(\Delta)\}.
\]
Clearly, each $L_i$ is an independent set. Let $\bcp=\{P_u:u\in V(\Delta)\}$. It is straightforward to verify conditions~\eqref{it:itdca}--\eqref{it:itdcc}.

Conversely, suppose that $V(\Gamma)$ has a partition $\{L_1,\ldots,L_n\}$ into independent sets and a partition  $\mathcal{P}$ satisfying~\eqref{it:itdca}--\eqref{it:itdcc}. Let $\Delta$ be the digraph with vertex set $\mathcal{P}$ such that $\{P_u,P_v\}\in E(\Delta)$ if and only if there exists an edge of $\Gamma$ from $P_u$ to $P_v$. Similarly, Let $\Sigma$ be the digraph with vertex set $\{L_1,\ldots,L_n\}$ such that $\{L_i,L_j\}\in E(\Sigma)$ if and only if there exists an edge of $\Gamma$ from $L_i$ to $L_j$.
Then both $\Delta$ and $\Sigma$ are undirected, and $\Sigma$ has no loops as $L_1,\ldots,L_n$ are independent sets in $\Gamma$.
By Lemma~\ref{le:leggd}, we derive from conditions~\eqref{it:itdca} and~\eqref{it:itdcb} that $\Gamma\cong\Delta\times\Sigma$.
Without loss of generality, identify $\Gamma$ with $\Delta\times\Sigma$. Since $\Gamma$ is connected, both $\Delta$ and $\Sigma$ are connected. Take an arbitrary $i\in\{1,\ldots,n\}$ and take any edge $\{P_u,P_v\}$ of $\Delta$. Then the constant $d_{u,v}$ in condition~\eqref{it:itdcc} is nonzero and hence equals $2$. Let $x$ be the unique element in $P_u\cap L_i$. Then under the identification of $\Gamma$ with $\Delta\times\Sigma$, the vertex $x$ of $\Gamma$ is viewed as $(P_u,L_i)\in V(\Delta)\times V(\Sigma)$, and the condition $|N_\Gamma(x)\cap P_v|=d_{u,v}=2$ implies that the degree of $L_i$ in $\Sigma$ is $2$. Since $i$ is arbitrary and $\Sigma$ is connected without loops, it follows that $\Sigma\cong C_n$. Thus, $\Gamma\cong\Delta\times\Sigma\cong\Delta\times C_n$.
\end{proof}

For sets $U$ and $W$, a permutation of $U\times W$ is called a \emph{$U$-mixer} if it does not preserve the $U$-partition, and is called a \emph{$W$-mixer} if it does not preserve the $W$-partition. For a graph $\Gamma$, we simply call $V(\Gamma)$-partitions \emph{$\Gamma$-partitions} and $V(\Gamma)$-mixers \emph{$\Gamma$-mixers}.
In the following lemma, the first statement is obvious, while the second statement, which is exactly~\cite[Lemma~2.3]{GLX2025}, is not difficult to see from the first.

\begin{lemma}\label{le:legsm}
The following statements hold:
\begin{enumerate}[\rm(a)]
\item\label{le:legsm-a} For sets $U$ and $W$, a permutation of $U\times W$ is not in $\Sym(U)\times\Sym(W)$ if and only if it is either a $U$-mixer or a $W$-mixer.
\item\label{le:legsm-b} For nonempty graphs $\Gamma$ and $\Sigma$, an automorphism of $\Gamma\times\Sigma$ is unexpected if and only if it is either a $\Gamma$-mixer or a $\Sigma$-mixer.
\end{enumerate}
\end{lemma}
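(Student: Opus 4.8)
For part (a), the plan is to reduce ``membership in $\Sym(U)\times\Sym(W)$'' to ``preserving both canonical partitions'' and then negate. I would first show that a permutation $g$ of $U\times W$ lies in $\Sym(U)\times\Sym(W)$ if and only if it preserves both the $U$-partition and the $W$-partition. One direction is immediate: if $(u,w)^g=(u^\alpha,w^\beta)$ with $\alpha\in\Sym(U)$ and $\beta\in\Sym(W)$, then $g$ carries $\{u\}\times W$ to $\{u^\alpha\}\times W$ and $U\times\{w\}$ to $U\times\{w^\beta\}$. For the converse, I would note that if $g$ preserves both partitions then it induces a permutation $\alpha$ of $U$ and a permutation $\beta$ of $W$ on the respective blocks; since $(u,w)$ is the unique common point of its two blocks $\{u\}\times W$ and $U\times\{w\}$, its image must be the unique common point of $\{u^\alpha\}\times W$ and $U\times\{w^\beta\}$, that is $(u^\alpha,w^\beta)$, so $g=(\alpha,\beta)$. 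Negating this biconditional, and observing that ``not a $U$-mixer and not a $W$-mixer'' is exactly ``preserves both partitions'', gives part~(a).

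For part~(b) I would apply part~(a) with $U=V(\Gamma)$ and $W=V(\Sigma)$, so that the $\Gamma$- and $\Sigma$-partitions are the $U$- and $W$-partitions of $V(\Gamma\times\Sigma)$, and being a $\Gamma$-mixer or a $\Sigma$-mixer is exactly being outside $\Sym(V(\Gamma))\times\Sym(V(\Sigma))$. The remaining task is to prove the identity
\[
\Aut(\Gamma\times\Sigma)\cap(\Sym(V(\Gamma))\times\Sym(V(\Sigma)))=\Aut(\Gamma)\times\Aut(\Sigma),
\]
after which an automorphism $\tau$ of $\Gamma\times\Sigma$ is unexpected (i.e.\ $\tau\notin\Aut(\Gamma)\times\Aut(\Sigma)$) if and only if $\tau\notin\Sym(V(\Gamma))\times\Sym(V(\Sigma))$, which by part~(a) happens exactly when $\tau$ is a $\Gamma$-mixer or a $\Sigma$-mixer. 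The inclusion $\supseteq$ is the embedding~\eqref{eq:direct}. For $\subseteq$, given $(\alpha,\beta)\in\Aut(\Gamma\times\Sigma)$ with $\alpha\in\Sym(V(\Gamma))$ and $\beta\in\Sym(V(\Sigma))$, I would use that $\Sigma$ is nonempty to fix an arc $(i_0,j_0)\in A(\Sigma)$; then every arc $(u,v)\in A(\Gamma)$ yields $((u,i_0),(v,j_0))\in A(\Gamma\times\Sigma)$, and applying $(\alpha,\beta)$ shows $(u^\alpha,v^\alpha)\in A(\Gamma)$. Hence $\alpha$ maps $A(\Gamma)$ into itself injectively, and since $A(\Gamma)$ is finite this forces $\alpha\in\Aut(\Gamma)$; symmetrically, using that $\Gamma$ is nonempty, $\beta\in\Aut(\Sigma)$.

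I do not expect a serious obstacle here, as both statements are essentially bookkeeping. The only point demanding genuine care is the inclusion $\subseteq$ above, where the nonemptiness of $\Gamma$ and $\Sigma$ is really used: if, say, $\Sigma$ had no arcs, then $\Gamma\times\Sigma$ would be edgeless and \emph{any} componentwise permutation $(\alpha,\beta)$ would be an automorphism even when $\alpha\notin\Aut(\Gamma)$, so the identity would fail. I would also phrase the whole argument in terms of the arc set $A$ rather than the edge set, so that loops (the arcs $(x,x)$) are absorbed uniformly and no separate case analysis for loops is needed.
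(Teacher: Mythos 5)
Your proof is correct and takes essentially the same route the paper does: it treats part~(a) as bookkeeping about the two canonical partitions and then deduces part~(b) from~(a) via the identity $\Aut(\Gamma\times\Sigma)\cap(\Sym(V(\Gamma))\times\Sym(V(\Sigma)))=\Aut(\Gamma)\times\Aut(\Sigma)$, which is exactly the content of the result \cite[Lemma~2.3]{GLX2025} that the paper cites without reproving. Your attention to where nonemptiness and finiteness of the arc sets are used correctly supplies the details the paper leaves implicit.
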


The rest of this section is devoted to Proposition~\ref{lem:mixer}. Recall from Lemma~\ref{le:lecsb} that $S(\Gamma)$ is connected for each connected bipartite graph $\Gamma$, whereas if $\Sigma$ is a connected bipartite graph with bipartition $\{U,W\}$, then $S(\Sigma)$ has precisely two connected components with vertex sets $U$ and $W$ respectively. Then by Lemma~\ref{thm:cartesian}, $S(\Gamma)\Bbox S(\Sigma)$ has precisely two connected components, whose vertex sets are $V(\Gamma)\times U$ and $V(\Gamma)\times W$ respectively.

\begin{lemma}\label{lem:Gamma-mixer}
Let $\Gamma$ and $\Sigma$ be connected graphs such that $\Gamma$ is non-bipartite and $\Sigma$ is bipartite with bipartition $\{U,W\}$, and let $\rho$ be an automorphism of $S(\Gamma)\Bbox S(\Sigma)$ that stabilizes $V(\Gamma)\times U$ and is a $\Sigma$-mixer of $V(\Gamma)\times V(\Sigma)$. Then $\rho$ is a $\Gamma$-mixer of $V(\Gamma)\times V(\Sigma)$.
\end{lemma}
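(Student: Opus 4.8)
The plan is to prove the contrapositive: assuming $\rho$ is \emph{not} a $\Gamma$-mixer, I will show it is not a $\Sigma$-mixer either, i.e.\ that $\rho$ preserves the $\Sigma$-partition $\{V(\Gamma)\times\{i\}:i\in V(\Sigma)\}$. First I fix the geometry. As $\Gamma$ is connected non-bipartite and $\Sigma$ is connected bipartite, Lemma~\ref{le:lecsb} gives that $S(\Gamma)$ is connected while $S(\Sigma)$ has two connected components $S_+(\Sigma)$ and $S_-(\Sigma)$ with vertex sets $U$ and $W$. Hence $S(\Gamma)\Bbox S(\Sigma)$ has the two connected components $S(\Gamma)\Bbox S_+(\Sigma)$ and $S(\Gamma)\Bbox S_-(\Sigma)$ on $V(\Gamma)\times U$ and $V(\Gamma)\times W$. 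Since $\rho$ stabilizes $V(\Gamma)\times U$, it stabilizes each component and restricts to automorphisms $\rho_+$ and $\rho_-$ of them.

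Next I record how the two partitions sit inside one component, say $S(\Gamma)\Bbox S_+(\Sigma)$: the copies of $S_+(\Sigma)$ (the sets $\{v\}\times U$) are precisely the blocks of the $\Gamma$-partition intersected with $V(\Gamma)\times U$, while the copies of $S(\Gamma)$ (the sets $V(\Gamma)\times\{u\}$) are precisely the blocks of the $\Sigma$-partition intersected with $V(\Gamma)\times U$. Because $\rho$ stabilizes $V(\Gamma)\times U$, the hypothesis that $\rho$ preserves the $\Gamma$-partition is, after intersecting the blocks with $V(\Gamma)\times U$, equivalent to $\rho_+$ preserving the copies of $S_+(\Sigma)$; and similarly the goal that $\rho$ preserves the $\Sigma$-partition is equivalent to $\rho_+$ preserving the copies of $S(\Gamma)$ together with the analogous statement for $\rho_-$. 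Thus the whole lemma reduces to the following rigidity fact: for connected graphs $A$ and $B$, an automorphism of $A\Bbox B$ that preserves the partition into copies of $B$ must also preserve the partition into copies of $A$.

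To prove this fact I pass to Cartesian-prime factorizations $A=G_1\Bbox\cdots\Bbox G_p$ and $B=H_1\Bbox\cdots\Bbox H_q$, so that $A\Bbox B=G_1\Bbox\cdots\Bbox G_p\Bbox H_1\Bbox\cdots\Bbox H_q$ is a Cartesian-prime factorization, unique up to isomorphism and reordering by Lemma~\ref{thm:cartesian}\,\eqref{thm:cartesiana}. For a set $I$ of these prime factors, call the sets obtained by fixing all coordinates outside $I$ the \emph{$I$-layers}; then the copies of $A$ are the $\{G_1,\ldots,G_p\}$-layers and the copies of $B$ are the $\{H_1,\ldots,H_q\}$-layers. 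By Lemma~\ref{thm:cartesian}\,\eqref{thm:cartesianb}, an automorphism $\phi$ of $A\Bbox B$ induces a permutation $\sigma$ of the $p+q$ prime factors that interchanges only mutually isomorphic factors, and $\phi$ carries each $I$-layer to a $\sigma(I)$-layer. Since all prime factors are nontrivial, distinct index sets yield distinct layer partitions, so preserving the copies of $B$ forces $\sigma$ to fix $\{H_1,\ldots,H_q\}$ setwise; then $\sigma$ fixes the complementary set $\{G_1,\ldots,G_p\}$ setwise, and therefore $\phi$ preserves the copies of $A$, as claimed.

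Applying the fact to $\rho_+$ and to $\rho_-$ shows that $\rho$ maps every set $V(\Gamma)\times\{i\}$ onto another such set, so $\rho$ preserves the $\Sigma$-partition and is not a $\Sigma$-mixer, which completes the contrapositive. I expect the main obstacle to be the rigidity fact of the third paragraph, specifically the justification that preserving the copies of $B$ is equivalent to the induced factor permutation fixing the block of $B$-factors; this is where the uniqueness of the Cartesian-prime factorization and the observation that different families of factors induce genuinely different layer partitions are essential. The degenerate cases $|V(\Gamma)|=1$, $|U|=1$, or $|W|=1$ make one factorization empty, but then one of the two partitions consists of singletons or is a single block, and the fact holds trivially.
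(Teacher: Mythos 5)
Your proposal is correct and takes essentially the same approach as the paper: both arguments restrict $\rho$ to the two connected components $S(\Gamma)\Bbox S_+(\Sigma)$ and $S(\Gamma)\Bbox S_-(\Sigma)$, then invoke the uniqueness of Cartesian-prime factorizations and the structure of automorphisms of connected Cartesian products (Lemma~\ref{thm:cartesian}) to control how the induced permutation of prime factors interacts with the two layer partitions. The only difference is organizational: the paper argues directly, extracting a prime factor that the restricted automorphism transfers from the $\Sigma$-side to the $\Gamma$-side and exhibiting two explicit vertices witnessing $\Gamma$-mixing, whereas you prove the contrapositive via your symmetric ``rigidity fact''---logically the same argument.
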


\begin{proof}
Let $S_+(\Sigma)$ and $S_-(\Sigma)$ be the induced subgraphs of $S(\Sigma)$ by $U$ and $W$, respectively, let $V=V(\Gamma)$, and let $\pi$ be the projection of $V\times V(\Sigma)$ to $V$. By Lemmas~\ref{thm:cartesian} and~\ref{le:lecsb}, $S(\Gamma)\Bbox S_+(\Sigma)$ and $S(\Gamma)\Bbox S_-(\Sigma)$ are the connected components of $S(\Gamma)\Bbox S(\Sigma)$.
Hence $\rho$ induces automorphisms on $S(\Gamma)\Bbox S_+(\Sigma)$ and $S(\Gamma)\Bbox S_-(\Sigma)$, say $\phi$ and $\psi$, respectively.

Since $\rho$ is a $\Sigma$-mixer of $V\times(U\cup W)$, either $\phi$ is a $U$-mixer of $V\times U$, or $\psi$ is a $W$-mixer of $V\times W$. Without loss of generality, assume that $\phi$ is a $U$-mixer of $V\times U$. Then $\phi\notin\Sym(V)\times\Sym(U)$.
Considering the Cartesian-prime factorizations of $S(\Gamma)$, $S_+(\Sigma)$ and $S(\Gamma)\Bbox S_+(\Sigma)$, we derive by Lemma~\ref{thm:cartesian} that $S(\Gamma)$ and $S_+(\Sigma)$ can be written as
\[
S(\Gamma)=\Gamma'\Bbox\Delta_1\Bbox\cdots\Bbox\Delta_a\ \text{ and }\ S_+(\Sigma)=\Sigma'\Bbox\Delta_{a+1}\Bbox\cdots\Bbox\Delta_{a+b}
\]
for some graphs $\Gamma'$, $\Sigma'$ and $\Delta_1,\ldots,\Delta_{a+b}$, where $a$ and $b$ are positive integers, such that $\Delta_1,\ldots,\Delta_{a+b}$ are isomorphic Cartesian-prime graphs and $\Delta_{a+b}$ is sent to $\Delta_1$ by $\phi$. Then
\[
V\times U=V(\Gamma')\times V(\Delta_1)\times\cdots\times V(\Delta_a)\times V(\Sigma')\times V(\Delta_{a+1})\times\cdots\times V(\Delta_{a+b}).
\]
Take $x$ and $y$ in $V\times U$ such that
\[
x=(v_0,v_1,\ldots,v_a,u_0,u_1,\ldots,u_b)\ \text{ and }\ y=(v_0,v_1,\ldots,v_a,u_0,u_1,\ldots,u_{b-1},u_b')
\]
with distinct $u_b$ and $u_b'$ in $V(\Delta_{a+b})$. Then $x^{\pi}=y^{\pi}$, and since $\phi$ sends $\Delta_{a+b}$ to $\Delta_1$, we have $x^{\phi\pi}\neq y^{\phi\pi}$. This shows that $x^{\rho\pi}\neq y^{\rho\pi}$, and so $\rho$ is a $V$-mixer.
\end{proof}

\begin{lemma}\label{lem:S(Gamma)}
Let $\Gamma$ and $\Sigma$ be connected twin-free graphs such that $\Gamma$ is stable and $\Sigma$ is bipartite with parts $U$ and $W$ of size at least three, and let $\rho\in\Aut(\Gamma\times\Sigma)$ be a $\Gamma$-mixer that stabilizes $V(\Gamma)\times U$. Suppose that $S(\Gamma)$ is Cartesian-prime. Then $\rho$ induces a $U$-mixer of $V(\Gamma)\times U$ and a $W$-mixer of $V(\Gamma)\times W$.
\end{lemma}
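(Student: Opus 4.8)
The plan is to pull $\rho$ back to the Cartesian skeleton, use the Cartesian-primeness of $S(\Gamma)$ to obtain a clean dichotomy for the two induced maps, and then use the stability of $\Gamma$ through a two-fold-automorphism to eliminate the degenerate case. Write $V=V(\Gamma)$. Since $S(\Gamma)$ is Cartesian-prime it is connected, so $\Gamma$ is non-bipartite by Lemma~\ref{le:lecsb}\,\eqref{it:itcsbb}. By Lemma~\ref{le:lecsb}\,\eqref{le:lecsd} we have $S(\Gamma\times\Sigma)=S(\Gamma)\Bbox S(\Sigma)$, which by Lemmas~\ref{thm:cartesian} and~\ref{le:lecsb} has exactly two connected components, $S(\Gamma)\Bbox S_+(\Sigma)$ on $V\times U$ and $S(\Gamma)\Bbox S_-(\Sigma)$ on $V\times W$. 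As $\rho\in\Aut(S(\Gamma\times\Sigma))$ stabilizes $V\times U$, hence also $V\times W$, it induces automorphisms $\phi:=\rho|_{V\times U}$ and $\psi:=\rho|_{V\times W}$ of these two components.

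First I would establish a dichotomy for each of $\phi,\psi$. Appending the prime factor $S(\Gamma)$ to a Cartesian-prime factorization of $S_+(\Sigma)$, Lemma~\ref{thm:cartesian}\,\eqref{thm:cartesianb} shows that $\phi$ permutes the Cartesian-prime factors of $S(\Gamma)\Bbox S_+(\Sigma)$; as $S(\Gamma)$ is a single such factor, either $\phi$ keeps the $S(\Gamma)$-factor in place, so that $\phi\in\Sym(V)\times\Sym(U)$ preserves both the $V$- and the $U$-partition, or $\phi$ moves it, in which case $\phi$ is simultaneously a $V$-mixer and a $U$-mixer. Hence $\phi$ is a $U$-mixer precisely when $\phi\notin\Sym(V)\times\Sym(U)$, and likewise $\psi$ is a $W$-mixer precisely when $\psi\notin\Sym(V)\times\Sym(W)$. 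The goal thus becomes to show that neither $\phi$ nor $\psi$ lies in the corresponding $\Sym\times\Sym$.

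Suppose for contradiction that $\phi\in\Sym(V)\times\Sym(U)$, say $\phi=\phi_1\times\phi_2$. The crux is to force $\psi$ into $\Sym(V)\times\Sym(W)$ as well. Writing $\psi(t,w')=(h(t,w'),g(t,w'))$ with $h(t,w')\in V$ and $g(t,w')\in W$, and choosing for each $w'\in W$ an edge $\{u',w'\}$ of $\Sigma$ together with an edge at $t$ in $\Gamma$, the preservation of adjacency in $\Gamma\times\Sigma$ by $\rho$ yields $\phi_2(N_\Sigma(w'))\subseteq N_\Sigma(g(t,w'))$ for every $t\in V$. I would then upgrade this to an equality by a global degree count: since $\psi$ permutes $V\times W$ and $\phi_2$ is injective, the sums over $(t,w')$ of $\deg_\Sigma(g(t,w'))$ and of $\deg_\Sigma(w')$ agree while each term satisfies $\deg_\Sigma(g(t,w'))\geq\deg_\Sigma(w')$, forcing $\phi_2(N_\Sigma(w'))=N_\Sigma(g(t,w'))$. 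As the right-hand side is independent of $t$ and $\Sigma$ is twin-free, $g(t,w')$ is independent of $t$; so $\psi$ preserves the $W$-partition and, by the dichotomy, $\psi=\psi_1\times\psi_2\in\Sym(V)\times\Sym(W)$.

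With $\phi$ and $\psi$ both of product form, I would finish using stability. For any edge $\{s,t\}$ of $\Gamma$ and any edge $\{u',w'\}$ of $\Sigma$, applying $\rho$ to the edge $\{(s,u'),(t,w')\}$ of $\Gamma\times\Sigma$ gives $(s,t)\in A(\Gamma)\Rightarrow(s^{\phi_1},t^{\psi_1})\in A(\Gamma)$, so $(\phi_1,\psi_1)\in\TFA(\Gamma)$. Since $\Gamma$ is stable, connected and non-bipartite, Lemma~\ref{lem:TF}\,\eqref{lem:TF-b} shows every TF-automorphism is diagonal, whence $\phi_1=\psi_1$; but then $\rho$ maps each $\{v\}\times V(\Sigma)$ onto $\{v^{\phi_1}\}\times V(\Sigma)$, contradicting that $\rho$ is a $\Gamma$-mixer. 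Therefore $\phi\notin\Sym(V)\times\Sym(U)$, i.e.\ $\phi$ is a $U$-mixer, and the argument with $U$ and $W$ interchanged shows $\psi$ is a $W$-mixer. I expect the main obstacle to be the middle step: converting the one-sided neighbourhood containment into the equality that makes twin-freeness applicable, since this is exactly the point where the global bijectivity of $\rho$ has to be played off against the purely local adjacency constraints.
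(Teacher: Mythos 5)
Your proof is correct in substance and shares the paper's overall skeleton: restrict $\rho$ to the two skeleton components $S(\Gamma)\Bbox S_+(\Sigma)$ and $S(\Gamma)\Bbox S_-(\Sigma)$, exploit the wreath-product structure from Lemma~\ref{thm:cartesian}\,\eqref{thm:cartesianb} to identify ``mixer'' with ``not of product form'', and rule out the case where both induced maps $\phi,\psi$ have product form by extracting a TF-automorphism $(\phi_1,\psi_1)$ of $\Gamma$, which stability forces to be diagonal, contradicting the $\Gamma$-mixer hypothesis (this last step is the exact contrapositive packaging of the paper's first case, which builds a non-diagonal TF-automorphism and invokes Lemma~\ref{lem:TF}\,\eqref{lem:TF-a}). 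Where you genuinely diverge is in eliminating the mixed cases. The paper assumes $\phi\notin\Sym(V)\times\Sym(U)$ and $\psi\in\Sym(V)\times\Sym(W)$, uses the cyclic permutation of isomorphic Cartesian-prime factors to get $(V\times\{i\})^\phi=\{v\}\times I$, and then shows $\Gamma$ would be a complete graph with loops, contradicting twin-freeness of $\Gamma$ (via a somewhat delicate ``$v$ runs over $V$'' surjectivity step). You instead prove the contrapositive $\phi\in\Sym(V)\times\Sym(U)\Rightarrow\psi\in\Sym(V)\times\Sym(W)$: adjacency preservation gives $\phi_2(N_\Sigma(w'))\subseteq N_\Sigma(g(t,w'))$, a global degree count over $V\times W$ (using bijectivity of $\psi$ and injectivity of $\phi_2$) upgrades the containment to equality, and twin-freeness of $\Sigma$ then makes $g(t,w')$ independent of $t$. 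This alternative is valid and arguably cleaner: it trades twin-freeness of $\Gamma$ for twin-freeness of $\Sigma$, confines the use of Cartesian-primeness to the dichotomy, and replaces the structural factor-chasing by an elementary counting argument.

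One blemish: your opening claim that Cartesian-primeness of $S(\Gamma)$ forces connectedness (hence non-bipartiteness of $\Gamma$ via Lemma~\ref{le:lecsb}\,\eqref{it:itcsbb}) is false as stated---for instance, any disconnected graph of prime order, or a disjoint union of two Cartesian-coprime connected graphs, is Cartesian-prime but disconnected. Connectedness of $S(\Gamma)$ is genuinely needed, by your argument and by the paper's alike, so that $S(\Gamma\times\Sigma)$ has exactly the two components on $V(\Gamma)\times U$ and $V(\Gamma)\times W$; the paper's own proof assumes this tacitly (the lemma is only ever applied with $\Gamma$ non-bipartite, where Lemma~\ref{le:lecsb}\,\eqref{it:itcsba} supplies it). So this is a shared implicit hypothesis rather than a defect of your route, but you should record non-bipartiteness of $\Gamma$ as an assumption instead of deriving it from primeness.
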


\begin{proof}
Let $V=V(\Gamma)$, let $S_+(\Sigma)$ and $S_-(\Sigma)$ be the induced subgraphs of $S(\Sigma)$ by $U$ and $W$ respectively, and let $\pi$ be the projection of $V\times V(\Sigma)$ to $V$.
By Lemmas~\ref{thm:cartesian} and~\ref{le:lecsb}, $S(\Gamma)\Bbox S_+(\Sigma)$ and $S(\Gamma)\Bbox S_-(\Sigma)$ are the connected components of $S(\Gamma\times\Sigma)=S(\Gamma)\Bbox S(\Sigma)$, and $\rho$ induces automorphisms on $S(\Gamma)\Bbox S_+(\Sigma)$ and $S(\Gamma)\Bbox S_-(\Sigma)$, say $\phi$ and $\psi$, respectively.

First suppose that $\phi\in\Sym(V)\times\Sym(U)$ and $\psi\in\Sym(V)\times\Sym(W)$.
Then for each $v\in V$, $i,i'\in U$ and $j,j'\in W$ we have
\begin{align*}
(v,i)^{\rho\pi}&=(v,i)^{\phi\pi}=(v,i')^{\phi\pi}=(v,i')^{\rho\pi},\\
(v,j)^{\rho\pi}&=(v,j)^{\psi\pi}=(v,j')^{\psi\pi}=(v,j')^{\rho\pi}.
\end{align*}
Since $\rho$ is a $\Gamma$-mixer, there exist $v'\in V$, $i'\in U$ and $j'\in W$ such that $(v',i')^{\rho\pi}\neq(v',j')^{\rho\pi}$. Take an edge $\{i,j\}$ of $\Sigma$ with $i\in U$ and $j\in W$. It follows that $(v',i)^{\rho\pi}\neq(v',j)^{\rho\pi}$.
For each $v\in V$, let $v^\alpha=(v,i)^{\rho\pi}$ and $v^\beta=(v,j)^{\rho\pi}$.
Since $\phi\in\Sym(V)\times\Sym(U)$ and $\psi\in\Sym(V)\times\Sym(W)$, both $\alpha$ and $\beta$ are permutations on $V$.  Observing that
\begin{align*}
(s,t)\in A(\Gamma)&\,\Rightarrow\,((s,i)^\rho,(t,j)^\rho)\in A(\Gamma\times\Sigma)\\
&\,\Rightarrow\,((s,i)^{\rho\pi},(t,j)^{\rho\pi})\in A(\Gamma)\,\Rightarrow\,(s^\alpha,t^\beta)\in A(\Gamma)
\end{align*}
and that $(v')^\alpha=(v',i)^{\rho\pi}\neq(v',j)^{\rho\pi}=(v')^\beta$, we conclude that $(\alpha,\beta)$ is a non-diagonal TF-automorphism of $\Gamma$. Then Lemma~\ref{lem:TF}\,\eqref{lem:TF-a} asserts that $\Gamma$ is unstable, a contradiction.

Next suppose that $\phi\notin\Sym(V)\times\Sym(U)$ and $\psi\in\Sym(V)\times\Sym(W)$.
Recall that $S(\Gamma)$ is Cartesian-prime. Considering the Cartesian-prime factorizations of $S_+(\Sigma)$ and $S(\Gamma)\Bbox S_+(\Sigma)$, we deduce by Lemma~\ref{thm:cartesian} that
\[
S_+(\Sigma)=\Sigma'\Bbox\Delta_1\Bbox\cdots\Bbox\Delta_a
\]
for some graphs $\Sigma'$ and $\Delta_1,\ldots,\Delta_a$ such that $S(\Gamma),\Delta_1,\ldots,\Delta_a$ are isomorphic and cyclically permuted by $\phi$. Thus, for each $i\in U$,
\begin{equation}\label{eq:eqa}
(V\times \{i\})^\phi=\{v\}\times I\,\text{ for some $v\in V$ and $I\subseteq U$}.
\end{equation}
Take any $j\in W$ such that $\{i,j\}\in E(\Sigma)$. For each $t\in V$, since $\psi\in\Sym(V)\times\Sym(W)$, there exists $s\in V$ such that $(s,j)^{\psi\pi}=t$. Then taking any neighbor $r$ of $s$ in $\Gamma$, we derive from~\eqref{eq:eqa} and $\{(r,i),(s,j)\}\in E(\Gamma\times\Sigma)$ that $v=(r,i)^{\phi\pi}$ is adjacent to $(s,j)^{\psi\pi}=t$. This shows that $v$ is adjacent to all the vertices in $\Gamma$. Since $\phi$ is bijective, $v$ runs over $V$ when $i$ runs over $U$. Hence $\Gamma$ is a complete graph with a loop attached to each vertex, contradicting to that $\Gamma$ is twin-free.

Similarly, the case when $\phi\in\Sym(V)\times\Sym(U)$ and $\psi\notin\Sym(V)\times\Sym(W)$ cannot occur. Therefore, $\phi\notin\Sym(V)\times\Sym(U)$ and $\psi\notin\Sym(V)\times\Sym(W)$. Following the argument in the previous paragraph up to~\eqref{eq:eqa}, we see that $\phi$ is a $U$-mixer. Similarly, $\psi$ is a $W$-mixer.
\end{proof}

\begin{lemma}\label{lem:S(Sigma)}
Let $\Gamma$ and $\Sigma$ be connected twin-free graphs such that $\Gamma$ is stable and $\Sigma$ is bipartite with parts $U$ and $W$ of size at least three, and let $\rho\in\Aut(\Gamma\times\Sigma)$ be a $\Gamma$-mixer that stabilizes $V(\Gamma)\times U$. Suppose that each connected component of $S(\Sigma)$ is Cartesian-prime and no connected component of $B(\Sigma)$ is complete. Then $\rho$ induces a $U$-mixer of $V(\Gamma)\times U$ and a $W$-mixer of $V(\Gamma)\times W$.
\end{lemma}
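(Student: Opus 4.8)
The plan is to follow the architecture of the proof of Lemma~\ref{lem:S(Gamma)}, replacing the Cartesian-primality of $S(\Gamma)$ by that of the two components of $S(\Sigma)$, and replacing the terminal ``$\Gamma$ is complete'' contradiction by a ``$B(\Sigma)$ has a complete component'' contradiction. First I set $V=V(\Gamma)$, let $S_+(\Sigma)$ and $S_-(\Sigma)$ be the induced subgraphs of $S(\Sigma)$ on $U$ and $W$, and let $\pi\colon V\times V(\Sigma)\to V$ and $\pi'\colon V\times V(\Sigma)\to V(\Sigma)$ be the two projections. Exactly as in Lemma~\ref{lem:S(Gamma)}, the two connected components of $S(\Gamma\times\Sigma)=S(\Gamma)\Bbox S(\Sigma)$ are $S(\Gamma)\Bbox S_+(\Sigma)$ and $S(\Gamma)\Bbox S_-(\Sigma)$, with vertex sets $V\times U$ and $V\times W$, and $\rho$ induces automorphisms $\phi$ and $\psi$ of these components. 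I would then split into cases according to whether $\phi\in\Sym(V)\times\Sym(U)$ and whether $\psi\in\Sym(V)\times\Sym(W)$.

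The case $\phi\in\Sym(V)\times\Sym(U)$ and $\psi\in\Sym(V)\times\Sym(W)$ can be settled verbatim as the first paragraph of the proof of Lemma~\ref{lem:S(Gamma)}: fixing an edge $\{i,j\}$ of $\Sigma$ with $i\in U$ and $j\in W$, the maps $v\mapsto(v,i)^{\rho\pi}$ and $v\mapsto(v,j)^{\rho\pi}$ constitute a non-diagonal TF-automorphism of $\Gamma$, contradicting the stability of $\Gamma$ through Lemma~\ref{lem:TF}\,\eqref{lem:TF-a}.

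Next I would treat the mixed case $\phi\notin\Sym(V)\times\Sym(U)$ and $\psi\in\Sym(V)\times\Sym(W)$, the reverse mixed case being symmetric under interchanging $U$ and $W$. This is where the Cartesian-primality of $S_+(\Sigma)$ is used: taking a Cartesian-prime factorization of $S(\Gamma)$ and noting that $S_+(\Sigma)$ is a single Cartesian-prime factor of $S(\Gamma)\Bbox S_+(\Sigma)$, Lemma~\ref{thm:cartesian}\,\eqref{thm:cartesianb} forces the mixer $\phi$ to carry the factor $S_+(\Sigma)$ onto an isomorphic factor of $S(\Gamma)$ and to carry some factor of $S(\Gamma)$ onto the factor $S_+(\Sigma)$. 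Reading off the coordinates, this gives for each $v\in V$
\[
(\{v\}\times U)^{\phi}=V_v\times\{i_v\}\qquad\text{for some }V_v\subseteq V\text{ and }i_v\in U,
\]
so that $(v,u)^{\phi\pi'}=i_v$ is independent of $u\in U$, and $v\mapsto i_v$ is onto $U$ because $\phi$ is a bijection. Now take any edge $\{x,y\}$ of $\Gamma$ and any edge $\{i,j\}$ of $\Sigma$ with $i\in U$ and $j\in W$; applying $\rho$ to the edge $\{(x,i),(y,j)\}$ of $\Gamma\times\Sigma$ and projecting by $\pi'$ yields $i_x\sim j^{\nu}$ in $\Sigma$, where $\nu\in\Sym(W)$ is the $W$-component of $\psi$. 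Since no vertex of $\Gamma$ and no vertex of $W$ is isolated, this holds for all $x\in V$ and all $j\in W$, and as $i_x$ ranges over $U$ and $j^{\nu}$ ranges over $W$ we conclude that every vertex of $U$ is adjacent in $\Sigma$ to every vertex of $W$. Then the component of $B(\Sigma)$ on $U$ is complete, contradicting the hypothesis. This excludes both mixed cases.

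Finally, in the case $\phi\notin\Sym(V)\times\Sym(U)$ and $\psi\notin\Sym(V)\times\Sym(W)$, the same factorization analysis shows that $\phi$ carries some factor of $S(\Gamma)$ onto the factor $S_+(\Sigma)$, so the $U$-coordinate of $(V\times\{i\})^{\phi}$ is non-constant; hence $(V\times\{i\})^{\phi}\neq V\times\{i'\}$ for every $i'\in U$, i.e.\ $\phi$ does not preserve the $U$-partition of $V\times U$ and is a $U$-mixer. Symmetrically $\psi$ is a $W$-mixer, which is the desired conclusion. I expect the main obstacle to lie in the factorization bookkeeping: because $S(\Gamma)$ need not be Cartesian-prime here (in contrast with Lemma~\ref{lem:S(Gamma)}), one must exploit that $S_+(\Sigma)$ is the \emph{unique} $U$-side factor and is Cartesian-prime to argue that a mixer necessarily swaps the $S_+(\Sigma)$-factor with an isomorphic $\Gamma$-side factor in \emph{both} directions---it is precisely these two directions that respectively produce the collapse used for the contradiction and the failure of the $U$-partition used for the conclusion.
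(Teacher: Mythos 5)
Your proposal is correct, and its skeleton is the same as the paper's: pass to the two connected components $S(\Gamma)\Bbox S_+(\Sigma)$ and $S(\Gamma)\Bbox S_-(\Sigma)$ of the skeleton, split into cases according to whether $\phi\in\Sym(V)\times\Sym(U)$ and $\psi\in\Sym(V)\times\Sym(W)$, kill the doubly-preserving case by manufacturing a non-diagonal TF-automorphism of $\Gamma$ (Lemma~\ref{lem:TF}\,\eqref{lem:TF-a}), and use Lemma~\ref{thm:cartesian}\,\eqref{thm:cartesianb} to show that a mixer must exchange the Cartesian-prime factor $S_+(\Sigma)$ with an isomorphic prime factor of $S(\Gamma)$, which in the final case yields exactly the paper's conclusion that $\phi$ is a $U$-mixer and $\psi$ a $W$-mixer. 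Where you genuinely diverge is the elimination of the mixed case $\phi\notin\Sym(V)\times\Sym(U)$, $\psi\in\Sym(V)\times\Sym(W)$, which is precisely where the hypothesis on $B(\Sigma)$ enters. The paper argues locally: using $|U|\geq3$ and non-completeness it picks an induced path $(i_1,i_2,i_3)$ in the $U$-component of $B(\Sigma)$, transports one edge of $\Gamma\times\Sigma$ through $\rho$, and uses the collapse $(s,i_1)^{\rho\pi_\Sigma}=(s,i_2)^{\rho\pi_\Sigma}$ to force $i_1$ to be adjacent to a common neighbour $j_2$ of $i_2$ and $i_3$, so that $\{i_1,i_3\}\in E(B(\Sigma))$, a contradiction. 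You argue globally: the same collapse, together with bijectivity of $\phi$ (so $v\mapsto i_v$ is onto $U$) and surjectivity of the $W$-component $\nu$ of $\psi$, shows that every vertex of $U$ is adjacent in $\Sigma$ to every vertex of $W$, i.e.\ $\Sigma$ is complete bipartite, whence the components of $B(\Sigma)$ are complete. Your route is slightly stronger and arguably cleaner: it makes no use of $|U|,|W|\geq3$ in this step, and the complete-bipartite conclusion would equally well contradict the twin-freeness of $\Sigma$; the price is only the extra (easy) observation that $v\mapsto i_v$ is surjective, which the paper's local argument does not need.
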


\begin{proof}
Let $\pi_\Gamma$ and $\pi_\Sigma$ be the projections of $V(\Gamma)\times V(\Sigma)$ to $V(\Gamma)$ and $V(\Sigma)$ respectively, let $S_+(\Sigma)$ and $S_-(\Sigma)$ be the induced subgraphs of $S(\Sigma)$ by $U$ and $W$ respectively, and let $V=V(\Gamma)$.
By Lemmas~\ref{thm:cartesian} and~\ref{le:lecsb}, $S(\Gamma)\Bbox S_+(\Sigma)$ and $S(\Gamma)\Bbox S_-(\Sigma)$ are the connected components of $S(\Gamma\times\Sigma)=S(\Gamma)\Bbox S(\Sigma)$, and $\rho$ induces automorphisms on $S(\Gamma)\Bbox S_+(\Sigma)$ and $S(\Gamma)\Bbox S_-(\Sigma)$, say $\phi$ and $\psi$, respectively. By the second paragraph in the proof of Lemma~\ref{lem:S(Gamma)}, the case when $\phi\in\Sym(V)\times\Sym(U)$ and $\psi\in\Sym(V)\times\Sym(W)$ is not possible.

Suppose that $\phi\notin\Sym(V)\times\Sym(U)$ and $\psi\in\Sym(V)\times\Sym(W)$. Recall that $S_+(\Sigma)$ is Cartesian-prime.
Considering the Cartesian-prime factorizations of $S(\Gamma)$ and $S(\Gamma)\Bbox S_+(\Sigma)$, we infer by Lemma~\ref{thm:cartesian} that
\[
S(\Gamma)=\Gamma'\Bbox\Delta_1\Bbox\cdots\Bbox\Delta_a
\]
for some graphs $\Gamma'$ and $\Delta_1,\ldots,\Delta_a$ such that $\Delta_1,\ldots,\Delta_a,S_+(\Sigma)$ are isomorphic and cyclically permuted by $\phi$. Consequently,
\[
(v,i)^{\phi\pi_\Sigma}=(v,j)^{\phi\pi_\Sigma}
\]
for each $v\in V$ and $i,j\in U$. Since $|U|\geq 3$ and the connected component of $B(\Sigma)$ with vertex set $U$ is not complete, $B(\Sigma)$ has a $2$-path $(i_1,i_2,i_3)$ in $U$ with $i_1$ and $i_3$ not adjacent. Take a common neighbor $j_1$ of $i_1$ and $i_2$ in $\Sigma$, a common neighbor $j_2$ of $i_2$ and $i_3$ in $\Sigma$, and an edge $\{s,t\}$ of $\Gamma$. Then $j_1,j_2\in W$, and since $\psi\in\Sym(V)\times\Sym(W)$,
\begin{equation}\label{eq:(t,j_2)}
(t,j_2)^\rho=(t,j_2)^\psi=((t,j_2)^{\psi\pi_\Gamma},(t,j_2)^{\psi\pi_\Sigma})=((t,j_1)^{\psi\pi_\Gamma},(t,j_2)^{\psi\pi_\Sigma})\in V\times W.
\end{equation}
Since $\{(s,i_1),(t,j_1)\}$ and $\{(s,i_2),(t,j_2)\}$ are edges of $\Gamma\times\Sigma$, it follows that $(s,i_1)^{\rho\pi_\Gamma}$ is adjacent to $(t,j_1)^{\rho\pi_\Gamma}=(t,j_1)^{\psi\pi_\Gamma}$ in $\Gamma$, and that $(s,i_2)^{\rho\pi_\Sigma}$ is adjacent to $(t,j_2)^{\rho\pi_\Sigma}=(t,j_2)^{\psi\pi_\Sigma}$ in $\Sigma$. Combining the latter with
\[
(s,i_1)^{\rho\pi_\Sigma}=(s,i_1)^{\phi\pi_\Sigma}=(s,i_2)^{\phi\pi_\Sigma}=(s,i_2)^{\rho\pi_\Sigma},
\]
we deduce that $((s,i_1)^{\rho\pi_\Gamma},(s,i_1)^{\rho\pi_\Sigma})$ is adjacent to $((t,j_1)^{\psi\pi_\Gamma},(t,j_2)^{\psi\pi_\Sigma})$ in $\Gamma\times\Sigma$. In view of~\eqref{eq:(t,j_2)}, we conclude that $(s,i_1)^\rho$ is adjacent to $(t,j_2)^\rho$ in $\Gamma\times\Sigma$. As a consequence, $(s,i_1)$ is adjacent to $(t,j_2)$ in $\Gamma\times\Sigma$, and so $i_1$ is adjacent to $j_2$ in $\Sigma$. However, this implies that $i_1$ is adjacent to $i_3$ in $B(\Sigma)$, a contradiction.

Similarly, the case when $\phi\in\Sym(V)\times\Sym(U)$ and $\psi\notin\Sym(V)\times\Sym(W)$ cannot occur. Therefore, $\phi\notin\Sym(V)\times\Sym(U)$ and $\psi\notin\Sym(V)\times\Sym(W)$. Considering the Cartesian-prime factorizations of $S(\Gamma)$ and $S(\Gamma)\Bbox S_+(\Sigma)$, we deduce by Lemma~\ref{thm:cartesian} that $S(\Gamma)=\Gamma'\Bbox\Delta_1\Bbox\cdots\Bbox\Delta_a$
for some graphs $\Gamma'$ and $\Delta_1,\ldots,\Delta_a$ such that $\Delta_1,\ldots,\Delta_a,S_+(\Sigma)$ are isomorphic and cyclically permuted by $\phi$. Accordingly, each vertex in $V\times U$ has the form
\[
(v_0,v_1,\ldots,v_a,u)\in V(\Gamma')\times V(\Delta_1)\times\cdots\times V(\Delta_a)\times U.
\]
Take $x$ and $y$ in $V\times U$ such that
\[
x=(v_0,v_1,\ldots,v_{a-1},v_a,u)\ \text{ and }\ y=(v_0,v_1,\ldots,v_{a-1},v_a',u)
\]
with distinct $v_a$ and $v_a'$ in $V(\Delta_{a})$. Then $x^{\phi\pi_\Gamma}=y^{\phi\pi_\Gamma}$ and $x^{\phi\pi_\Sigma}\neq y^{\phi\pi_\Sigma}$, which shows that $\phi$ is a $U$-mixer. In the same vein we see that $\psi$ is a $W$-mixer.
\end{proof}

Combining the above three lemmas, we are now in a position to establish the following:

\begin{proposition}\label{lem:mixer}
Let $\Gamma$ and $\Sigma$ be connected twin-free graphs such that $\Gamma$ is stable and $\Sigma$ is bipartite with parts $U$ and $W$ of size at least three, such that either $S(\Gamma)$ is Cartesian-prime, or each connected component of $S(\Sigma)$ is Cartesian-prime and no connected component of $B(\Sigma)$ is complete. Then for each automorphism $\rho$ of $\Gamma\times\Sigma$ that stabilizes $V(\Gamma)\times U$, the following are equivalent:
\begin{enumerate}[\rm(a)]
\item\label{lem-mixer-a} $\rho$ is a $\Gamma$-mixer of $V(\Gamma)\times V(\Sigma)$;
\item\label{lem-mixer-b} $\rho$ is a $\Sigma$-mixer of $V(\Gamma)\times V(\Sigma)$;
\item\label{lem-mixer-c} $\rho$ induces a $U$-mixer of $V(\Gamma)\times U$;
\item\label{lem-mixer-d} $\rho$ induces a $W$-mixer of $V(\Gamma)\times W$.
\end{enumerate}
\end{proposition}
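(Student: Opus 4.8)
The plan is to prove Proposition~\ref{lem:mixer} by establishing a cycle of implications that links the four conditions, exploiting the three lemmas already proved in this section as the substantive engine. The key observation is that $\rho$ stabilizes $V(\Gamma)\times U$ (and hence also $V(\Gamma)\times W$, since it is an automorphism of $\Gamma\times\Sigma$ and must preserve the bipartition $\{V(\Gamma)\times U,\,V(\Gamma)\times W\}$ coming from $\Sigma$). Thus $\rho$ restricts to automorphisms of the two connected components of $S(\Gamma)\Bbox S(\Sigma)$, namely the $\phi$ and $\psi$ of the preceding lemmas. Because the hypotheses of the Proposition are precisely the disjunction of the hypotheses of Lemma~\ref{lem:S(Gamma)} and Lemma~\ref{lem:S(Sigma)}, in either case I may invoke the conclusion ``$\rho$ induces a $U$-mixer of $V(\Gamma)\times U$ and a $W$-mixer of $V(\Gamma)\times W$'' whenever $\rho$ is a $\Gamma$-mixer.

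\textbf{The implications.} First I would show \eqref{lem-mixer-a} $\Rightarrow$ \eqref{lem-mixer-c} and \eqref{lem-mixer-a} $\Rightarrow$ \eqref{lem-mixer-d}: both follow immediately from Lemma~\ref{lem:S(Gamma)} or Lemma~\ref{lem:S(Sigma)}, according to which clause of the hypothesis holds, since in both lemmas the conclusion is that a $\Gamma$-mixer stabilizing $V(\Gamma)\times U$ induces a $U$-mixer of $V(\Gamma)\times U$ and a $W$-mixer of $V(\Gamma)\times W$. Next, \eqref{lem-mixer-c} $\Rightarrow$ \eqref{lem-mixer-b} and \eqref{lem-mixer-d} $\Rightarrow$ \eqref{lem-mixer-b} are essentially definitional: if $\rho$ induces a $U$-mixer of $V(\Gamma)\times U$, then the induced permutation on $V(\Gamma)\times U$ fails to preserve the $U$-partition $\{\{u\}\times W : \ldots\}$---here I must be careful to translate between the $U$-partition/$W$-partition language of Section~\ref{sec6} and the $\Gamma$-partition/$\Sigma$-partition language via Lemma~\ref{le:legsm}\,\eqref{le:legsm-b}. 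Concretely, a $U$-mixer of $V(\Gamma)\times U$ witnesses that $\rho$ does not preserve the $\Sigma$-partition $\{V(\Gamma)\times\{i\}:i\in V(\Sigma)\}$ restricted to $V(\Gamma)\times U$, hence $\rho$ is a $\Sigma$-mixer of the whole vertex set. Finally, \eqref{lem-mixer-b} $\Rightarrow$ \eqref{lem-mixer-a} is exactly Lemma~\ref{lem:Gamma-mixer}: an automorphism of $S(\Gamma)\Bbox S(\Sigma)$ stabilizing $V(\Gamma)\times U$ that is a $\Sigma$-mixer is a $\Gamma$-mixer.

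\textbf{Closing the loop.} With \eqref{lem-mixer-a} $\Rightarrow$ \{\eqref{lem-mixer-c} and \eqref{lem-mixer-d}\}, \{\eqref{lem-mixer-c} or \eqref{lem-mixer-d}\} $\Rightarrow$ \eqref{lem-mixer-b}, and \eqref{lem-mixer-b} $\Rightarrow$ \eqref{lem-mixer-a} in hand, all four statements are equivalent, since \eqref{lem-mixer-a} $\Rightarrow$ \eqref{lem-mixer-c} $\Rightarrow$ \eqref{lem-mixer-b} $\Rightarrow$ \eqref{lem-mixer-a} closes one cycle and \eqref{lem-mixer-a} $\Rightarrow$ \eqref{lem-mixer-d} $\Rightarrow$ \eqref{lem-mixer-b} ties in the remaining condition. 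The one genuinely nontrivial input is Lemma~\ref{lem:Gamma-mixer}, which uses the Cartesian-prime factorization to manufacture a pair of vertices with the same $\Gamma$-projection but different images; everything else is bookkeeping. \textbf{The main obstacle} I anticipate is a clean verification that $\rho$, assumed only to be an automorphism of $\Gamma\times\Sigma$ stabilizing $V(\Gamma)\times U$, is genuinely an automorphism of $S(\Gamma)\Bbox S(\Sigma)$ so that Lemma~\ref{lem:Gamma-mixer} applies---this requires Lemma~\ref{le:lecsb}\,\eqref{le:lecsd} to identify $S(\Gamma\times\Sigma)=S(\Gamma)\Bbox S(\Sigma)$ (valid since $\Gamma,\Sigma$ are twin-free without isolated vertices, being connected of order $>1$) together with Lemma~\ref{le:lecsb}\,\eqref{le:leas} to transfer $\rho$ from $\Aut(\Gamma\times\Sigma)$ to $\Aut(S(\Gamma\times\Sigma))$.
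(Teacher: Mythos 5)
Your proposal is correct and takes essentially the same route as the paper's proof: both use Lemma~\ref{le:lecsb} to regard $\rho$ as an automorphism of $S(\Gamma\times\Sigma)=S(\Gamma)\Bbox S(\Sigma)$, then obtain \eqref{lem-mixer-b}\,$\Rightarrow$\,\eqref{lem-mixer-a} from Lemma~\ref{lem:Gamma-mixer}, \eqref{lem-mixer-a}\,$\Rightarrow$\,\eqref{lem-mixer-c} and \eqref{lem-mixer-a}\,$\Rightarrow$\,\eqref{lem-mixer-d} from Lemmas~\ref{lem:S(Gamma)} and~\ref{lem:S(Sigma)} according to which clause of the hypothesis holds, and \eqref{lem-mixer-c}\,$\Rightarrow$\,\eqref{lem-mixer-b}, \eqref{lem-mixer-d}\,$\Rightarrow$\,\eqref{lem-mixer-b} by definition, closing the same cycle of implications. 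Your extra care in translating between the $U$-partition language and the $\Sigma$-partition language is fine (despite the momentary slip writing $\{\{u\}\times W:\ldots\}$ for the $U$-partition of $V(\Gamma)\times U$, which your subsequent sentence corrects), and matches what the paper leaves implicit.
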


\begin{proof}
According to Lemma~\ref{le:lecsb}, $\rho$ is an automorphism of $S(\Gamma\times\Sigma)=S(\Gamma)\Bbox S(\Sigma)$. Hence Lemma~\ref{lem:Gamma-mixer} implies~\eqref{lem-mixer-b}\,$\Rightarrow$\,\eqref{lem-mixer-a}. Moreover, Lemmas~\ref{lem:S(Gamma)} and~\ref{lem:S(Sigma)} together imply~\eqref{lem-mixer-a}\,$\Rightarrow$\,\eqref{lem-mixer-c} and~\eqref{lem-mixer-a}\,$\Rightarrow$\,\eqref{lem-mixer-d}. Noting by definition that~\eqref{lem-mixer-c}\,$\Rightarrow$\,\eqref{lem-mixer-b} and~\eqref{lem-mixer-d}\,$\Rightarrow$\,\eqref{lem-mixer-b}, we conclude that~\eqref{lem-mixer-a}--\eqref{lem-mixer-d} are all equivalent.
\end{proof}

\section{Proof of Theorem~\ref{thm3}}\label{sec2}

\subsection{Proof of Theorem~\ref{thm3}\,\eqref{thm3a}}

We prove the contrapositive of the statement. Suppose that $\Gamma$ is unstable. Since $(\Gamma,\Sigma)$ is a nontrivial graph pair with $\Sigma$ bipartite, $\Gamma$ is connected and non-bipartite. Then by Lemma~\ref{lem:TF}, there exists a nontrivial TF-automorphism $(\alpha,\beta)$ of $\Gamma$. In other words, $\alpha$ and $\beta$ are distinct permutations on $V(\Gamma)$ such that~\eqref{eq:TF} holds for all $s,t\in V(\Gamma)$.
Let $V(\Sigma)=\{v_1,\cdots,v_n\}$, and for $i\in\{1,\dots,n\}$, let $\alpha_i=\alpha$ if $v_i\in U$ and $\alpha_i=\beta$ if $v_i\in W$. Now consider $(\alpha_1,\dots,\alpha_n)\in\Sym(V(\Gamma))^n$. Since $\{U,W\}$ is a bipartition of $\Sigma$, each edge of $\Sigma$ has the form $\{v_i,v_j\}$ with $v_i\in U$ and $v_j\in W$. For this edge $\{v_i,v_j\}$, by~\eqref{eq:TF}, we have for $s,t\in V(\Gamma)$ that
\[
(s,t)\in A(\Gamma)\,\Leftrightarrow\,(s^\alpha,t^\beta)\in A(\Gamma)\,\Leftrightarrow\,(s^{\alpha_i},t^{\alpha_j})\in A(\Gamma).
\]
Moreover, since $\alpha\neq\beta$, the permutations $\alpha_1,\ldots,\alpha_n$ are not all equal. Hence $(\Gamma, \Sigma)$ is unstable by~\cite[Lemma~2.6(a)]{QXZZ2021}.

\subsection{Proof of Theorem~\ref{thm3}\,\eqref{thm3b}}

Let $\Gamma$ be stable, and denote $V=V(\Gamma)$, $X=V\times U$ and $Y=V\times W$. Define a mapping
\[
f\colon\,\Aut(\Gamma)\times\Aut(\Gamma)\times\TFA(\Sigma)\to\TFA(\Gamma\times\Sigma),\ \ (\sigma,\tau,(\alpha_0,\beta_0))\mapsto(\alpha,\beta)
\]
by letting $\alpha|_X=(\sigma,\alpha_0)|_X$, $\alpha|_Y=(\tau,\alpha_0)|_Y$, $\beta|_X=(\tau,\beta_0)|_X$ and  $\beta|_Y=(\sigma,\beta_0)|_Y$. It is straightforward to verify that $f$ indeed maps $\Aut(\Gamma)\times\Aut(\Gamma)\times\TFA(\Sigma)$ into $\TFA(\Gamma\times\Sigma)$, and that $f$ is injective. For each $(\alpha,\beta)\in\TFA(\Gamma\times\Sigma)$, Lemma~\ref{lem:TFA} asserts the existence of TF-automorphisms $(\alpha_+,\beta_-)$ and $(\alpha_-,\beta_+)$ of $\Gamma$ and TF-automorphism $(\alpha_0,\beta_0)$ of $\Sigma$ with $\alpha|_X=(\alpha_+,\alpha_0)|_X$, $\alpha|_Y=(\alpha_-,\alpha_0)|_Y$, $\beta|_X=(\beta_+,\beta_0)|_X$ and $\beta|_Y=(\beta_-,\beta_0)|_Y$. Since $\Gamma$ is stable, Lemma~\ref{lem:TF}\,\eqref{lem:TF-a} implies that every TF-automorphism of $\Gamma$ is diagonal. Therefore, $\alpha_+=\beta_-$ and $\alpha_-=\beta_+$ are both automorphisms of $\Gamma$. It follows that
\[
\alpha|_X=(\alpha_+,\alpha_0)|_X,\ \ \alpha|_Y=(\beta_+,\alpha_0)|_Y,\ \ \beta|_X=(\beta_+,\beta_0)|_X,\ \ \beta|_Y=(\alpha_+,\beta_0)|_Y,
\]
which means $(\alpha,\beta)=(\alpha_+,\beta_+,(\alpha_0,\beta_0))^f$. This shows that $f$ is surjective, and so $f$ is a bijection. As a consequence,
\begin{equation}\label{eq:|TFA|}
|\Aut(\Gamma)|^2|\TFA(\Sigma)|=|\TFA(\Gamma\times\Sigma)|.
\end{equation}
Note that $\Gamma\times\Sigma$ is connected and bipartite with bipartition $\{X,Y\}$, and that, as there is an automorphism of $\Sigma$ swapping $U$ and $W$, there is an automorphism of $\Gamma\times\Sigma$ swapping $X$ and $Y$. We obtain by Lemma~\ref{lem:TF-bipartite} that
\[
|\Aut(\Sigma\times K_2)|=4|\TFA(\Sigma)|\ \text{ and }\ |\Aut(\Gamma\times\Sigma\times K_2)|=4|\TFA(\Gamma\times\Sigma)|.
\]
This in conjunction with~\eqref{eq:|TFA|} yields
\begin{equation}\label{eq:|Aut|}
|\Aut(\Gamma)|^2|\Aut(\Sigma\times K_2)|=|\Aut(\Gamma\times\Sigma\times K_2)|.
\end{equation}
Since both $\Sigma$ and $\Gamma\times\Sigma$ are connected and bipartite, we have
\[
\Aut(\Sigma\times K_2)=\Aut(\Sigma)\wr Z_2\ \text{ and }\ \Aut(\Gamma\times\Sigma\times K_2)=\Aut(\Gamma\times\Sigma)\wr Z_2.
\]
Hence~\eqref{eq:|Aut|} turns out to be
\[
2|\Aut(\Gamma)|^2|\Aut(\Sigma)|^2=2|\Aut(\Gamma\times\Sigma)|^2,
\]
which leads to $|\Aut(\Gamma)||\Aut(\Sigma)|=|\Aut(\Gamma\times\Sigma)|$. Thus, $(\Gamma,\Sigma)$ is stable.

\subsection{Proof of Theorem~\ref{thm3}\,\eqref{thm3c}}

Since $(\Gamma,\Sigma)$ is a nontrivial pair with $\Sigma$ bipartite, $\Gamma$ is connected, non-bipartite and twin-free, and $\Sigma$ is connected. Then since $\{U,W\}$ is a bipartition of $\Sigma$, Lemma~\ref{le:lecsb} shows that $S(\Sigma)$ has precisely two connected components with vertex sets $U$ and $W$. Let $S_+(\Sigma)$ and $S_-(\Sigma)$ be the subgraphs of $S(\Sigma)$ induced by $U$ and $W$ respectively.

Let $V=V(\Gamma)$, let $\Pi=\Gamma\times\Sigma$, and let $\pi_\Gamma$ and $\pi_\Sigma$ be the projections of $V\times V(\Sigma)$ to $V$ and $V(\Sigma)$ respectively. Then $\Pi$ is bipartite with bipartition $\{X,Y\}$, where
\[
X:=V\times U\ \text{ and }\ Y:=V\times W.
\]
Accordingly, $S(\Pi)$ has precisely two connected components, denoted by $S_+(\Pi)$ and $S_-(\Pi)$, with vertex sets $X$ and $Y$, respectively. By Lemma~\ref{le:lecsb},
\begin{equation}\label{eq:S+S-}
S(\Pi)=S(\Gamma)\Bbox S(\Sigma),\ \ S_+(\Pi)=S(\Gamma)\Bbox S_+(\Sigma)\ \text{ and }\ S_-(\Pi)=S(\Gamma)\Bbox S_-(\Sigma).
\end{equation}
Suppose for a contradiction that $\Gamma$ is stable while $(\Gamma,\Sigma)$ is unstable. Then Lemma~\ref{le:legsm} and Lemma~\ref{le:lecsb}\,\eqref{le:leas} together imply the existence of some automorphism of $S(\Pi)$ that is either a $\Gamma$-mixer or a $\Sigma$-mixer. Let $\tau$ be such an automorphism.

By Lemma~\ref{claim1}, there exists $\sigma\in\Aut(S(\Sigma))$ with $|\sigma|\leq2$ such that $\rho:=(\mathrm{id},\sigma)\tau\in\Aut(S(\Pi))$ stabilizes $X$, where $\id$ is the identity permutation on $V$, and that if $|\sigma|=2$ then $\sigma$ interchanges $U$ and $W$.
It follows that $\rho$ induces some automorphisms, say $\phi$ and $\psi$, on $S_+(\Pi)$ and $S_-(\Pi)$ respectively.
Since $(\mathrm{id},\sigma)$ preserves both the $\Gamma$-partition and the $\Sigma$-partition, $\rho$ is either a $\Gamma$-mixer or a $\Sigma$-mixer.
Recall that each connected component of $S(\Sigma)$ is Cartesian-prime and no connected component of $B(\Sigma)$ is complete.
Then we derive from Proposition~\ref{lem:mixer} that $\phi$ is a $U$-mixer and $\psi$ is a $W$-mixer. Thus, viewing~\eqref{eq:S+S-}, we conclude by Lemma~\ref{thm:cartesian} that
\begin{equation}\label{eq:S+S-decom}
S(\Gamma)=\Gamma_+\Box S_+(\Sigma)^{\Box,c-1}=\Gamma_-\Box S_-(\Sigma)^{\Box,d-1}
\end{equation}
for some integers $c,d\geq 2$ and graphs $\Gamma_+,\Gamma_-$ Cartesian-coprime to $S_+(\Sigma),S_-(\Sigma)$ respectively, and that
\begin{align*}
\phi&=(\phi_0,(\phi_1,\ldots,\phi_c,g^{-1}))\in\Aut(\Gamma_+)\times\Aut(S_+(\Sigma)^{\Box,c})=\Aut(\Gamma_+\Box S_+(\Sigma)^{\Box,c}),\\ \psi&=(\psi_0,(\psi_1,\ldots,\psi_d,h^{-1}))\in\Aut(\Gamma_-)\times\Aut(S_-(\Sigma)^{\Box,d})=\Aut(\Gamma_-\Box S_-(\Sigma)^{\Box,d})
\end{align*}
with $(\phi_1,\ldots,\phi_c,g^{-1})\in\Aut(S_+(\Sigma))\wr S_c$ and $(\psi_1,\ldots,\psi_d,h^{-1})\in\Aut(S_-(\Sigma))\wr S_d$ satisfying the following conditions:
\begin{enumerate}[(i)]
\item\label{condition1} $(\phi_1,\ldots,\phi_c)\in\Aut(S_+(\Sigma))^c$ and $g\in S_c$ such that $c^g\in\{1,\ldots,c-1\}$;
\item\label{condition2} each $(x_0,x_1,\ldots,x_c)\in X=V(S_+(\Pi))$ is mapped by $\phi$ to $(x_0^{\phi_{0}},x_{1^g}^{\phi_{1^g}},\ldots,x_{c^g}^{\phi_{c^g}})$;
\item\label{condition3} $(\psi_1,\ldots,\psi_d)\in\Aut(S_-(\Sigma))^d$ and $h\in S_d$ such that $d^h\in\{1,\ldots,d-1\}$;
\item\label{condition4} each $(y_0,y_1,\ldots,y_d)\in Y=V(S_-(\Pi))$ is mapped by $\psi$ to $(y_0^{\psi_{0}},y_{1^h}^{\psi_{1^h}},\ldots,y_{d^h}^{\psi_{d^h}})$.
\end{enumerate}
Let $a=c^g\in\{1,\ldots,c-1\}$ and $b=d^h\in\{1,\ldots,d-1\}$, and fix some $\tilde{u}\in U$ and $\tilde{w}\in W$ such that $\tilde{u}$ is adjacent to $\tilde{w}$ in $\Sigma$.

\begin{claim}\label{claim5}
$(v,(v,\tilde{u})^{\tau\pi_\Sigma})^{\tau\pi_\Gamma}=(v,(v,\tilde{w})^{\tau\pi_\Sigma})^{\tau\pi_\Gamma}$ for each $v\in V$.
\end{claim}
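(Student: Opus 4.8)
The plan is to recast the claim as the coincidence of two self-maps of $V=V(\Gamma)$ and then recognise these maps as forming a TF-automorphism of $\Gamma$, which must be diagonal because $\Gamma$ is stable. For $v\in V$ put $P_v=(v,\tilde u)^{\tau\pi_\Sigma}$ and $Q_v=(v,\tilde w)^{\tau\pi_\Sigma}$, and define $H(v)=(v,P_v)^{\tau\pi_\Gamma}$ and $K(v)=(v,Q_v)^{\tau\pi_\Gamma}$. With this notation the assertion to be proved is exactly $H(v)=K(v)$ for all $v\in V$. I will treat the case where $\tau$ stabilises $X$ (so that $P_v\in U$ and $Q_v\in W$); the case where $\tau$ interchanges $X$ and $Y$ is entirely analogous after swapping the roles of $U$ and $W$ through $\sigma$.

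The first step is the adjacency input, which uses that $\tau\in\Aut(\Gamma\times\Sigma)$ (being the original unexpected automorphism). Since $\tilde u$ is adjacent to $\tilde w$ in $\Sigma$, for every edge $\{v,v'\}$ of $\Gamma$ the pair $(v,\tilde u),(v',\tilde w)$ is an edge of $\Gamma\times\Sigma$; applying $\tau$ and projecting to $\Sigma$ gives $P_v\sim Q_{v'}$ in $\Sigma$. Hence $(v,P_v),(v',Q_{v'})$ is again an edge of $\Gamma\times\Sigma$, so applying $\tau$ and projecting to $\Gamma$ yields $H(v)\sim K(v')$ in $\Gamma$; the symmetric edge gives $H(v')\sim K(v)$. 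In terms of arcs this reads $(s,t)\in A(\Gamma)\Rightarrow(s^{H},t^{K})\in A(\Gamma)$, which is precisely the defining implication for $(H,K)$ to be a TF-automorphism of $\Gamma$—provided $H$ and $K$ are permutations of $V$.

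The main obstacle is exactly this last proviso: the adjacency argument alone yields only a ``TF-homomorphism'', and upgrading it to a TF-automorphism requires $H,K\in\Sym(V)$. I would obtain bijectivity in one of two ways. First, the explicit wreath-product descriptions of $\phi=\rho|_X$ and $\psi=\rho|_Y$ in~\eqref{condition2} and~\eqref{condition4} show that $(v,z)^{\rho\pi_\Sigma}$ depends only on $v$ within each part, and unwinding $\tau=(\id,\sigma)^{-1}\rho$ transfers the same independence to $\tau$; substituting this back into the coordinates of $\phi$ exhibits $H$, and likewise $K$, as an explicit element of $\Aut(\Gamma_+)\times(\Aut(S_+(\Sigma))\wr S_{c-1})=\Aut(S(\Gamma))$, hence a permutation of $V$. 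Alternatively, running the identical construction for $\tau^{-1}$ produces maps $\widehat H,\widehat K$, and using $(v,P_v)^{\tau\pi_\Sigma}=P_v$ one verifies $\widehat H\circ H=H\circ\widehat H=\id$, so that $H$ is a bijection with inverse $\widehat H$, and similarly for $K$.

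With $H,K\in\Sym(V)$ secured, the second paragraph gives $(H,K)\in\TFA(\Gamma)$. Since $\Gamma$ is connected, non-bipartite and stable, Lemma~\ref{lem:TF}\,\eqref{lem:TF-b} forces every TF-automorphism of $\Gamma$ to be diagonal, whence $H=K$. Therefore $(v,P_v)^{\tau\pi_\Gamma}=(v,Q_v)^{\tau\pi_\Gamma}$ for every $v\in V$, which is the claim.
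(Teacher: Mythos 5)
Your proposal is correct and takes essentially the same route as the paper: the paper also defines $v^\alpha=(v,(v,\tilde{u})^{\tau\pi_\Sigma})^{\tau\pi_\Gamma}$ and $v^\beta=(v,(v,\tilde{w})^{\tau\pi_\Sigma})^{\tau\pi_\Gamma}$, proves they are permutations of $V$ by the same wreath-product coordinate analysis from conditions~(ii)/(iv) (your option~1 packages this as membership in $\Aut(\Gamma_+)\times(\Aut(S_+(\Sigma))\wr S_{c-1})$, while the paper runs it as a direct injectivity check), establishes the identical arc-implication $(s,t)\in A(\Gamma)\Rightarrow(s^\alpha,t^\beta)\in A(\Gamma)$, and then invokes the stability of $\Gamma$ to force $\alpha=\beta$. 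The only cosmetic difference is your case split by whether $\tau$ stabilizes $X$ versus the paper's split by $|\sigma|\in\{1,2\}$, which amount to the same thing.
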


\begin{proof}
For each $v\in V$, let $v^\alpha=(v,(v,\tilde{u})^{\tau\pi_\Sigma})^{\tau\pi_\Gamma}\in V$ and $v^\beta=(v,(v,\tilde{w})^{\tau\pi_\Sigma})^{\tau\pi_\Gamma}\in V$.
We first show that $\alpha$ and $\beta$ are permutations on $V$. Let $s$ and $t$ be elements of $V$ with $s^\alpha=t^\alpha$. If $|\sigma|=1$, then writing
\begin{align*}
s&=(s_0,s_1,\ldots,s_{c-1})\in V(\Gamma_+)\times U\times\cdots\times U,\\
t&=(t_0,t_1,\ldots,t_{c-1})\in V(\Gamma_+)\times U\times\cdots\times U,
\end{align*}
we deduce from condition~(ii) that $s^\alpha=(s,s_a^{\phi_a})^{\phi\pi_\Gamma}$ and $t^\alpha=(t,t_a^{\phi_a})^{\phi\pi_\Gamma}$.
Moreover, by~(ii),
\[
(s,s_a^{\phi_a})^{\phi\pi_\Gamma}=(t,t_a^{\phi_a})^{\phi\pi_\Gamma}\,\Rightarrow\,(s_a^{\phi_a})^{\phi_c}=(t_a^{\phi_a})^{\phi_c}
\,\Rightarrow\,s_a^{\phi_a}=t_a^{\phi_a},
\]
and so $(s,s_a^{\phi_a})^{\phi\pi_\Gamma}=(t,t_a^{\phi_a})^{\phi\pi_\Gamma}\,\Rightarrow\,(s,\tilde{u})^\phi=(t,\tilde{u})^\phi\,\Rightarrow\,s=t$.
If $|\sigma|=2$, then writing
\begin{align*}
s&=(s_0,s_1,\ldots,s_{d-1})\in V(\Gamma_-)\times W\times\cdots\times W,\\
t&=(t_0,t_1,\ldots,t_{d-1})\in V(\Gamma_-)\times W\times\cdots\times W,
\end{align*}
we deduce from~(iv) that $s^\alpha=(s,s_b^{\psi_b})^{\psi\pi_\Gamma}$ and $t^\alpha=(t,t_b^{\psi_b})^{\psi\pi_\Gamma}$.
Similarly as above, we have
\[
(s,s_b^{\psi_b})^{\psi\pi_\Gamma}=(t,t_b^{\psi_b})^{\psi\pi_\Gamma}\,\Rightarrow\,s=t.
\]
Thus, $\alpha$ is a permutation on $V$, and so is $\beta$ for the same reason.

Let $(s,t)\in A(\Gamma)$. Then $((s,\tilde{u})^\tau,(t,\tilde{w})^\tau)\in A(\Pi)$, and so $((s,\tilde{u})^{\tau\pi_\Sigma},(t,\tilde{w})^{\tau\pi_\Sigma})\in A(\Sigma)$, which implies that $((s,(s,\tilde{u})^{\tau\pi_\Sigma}),(t,(t,\tilde{w})^{\tau\pi_\Sigma}))\in A(\Pi)$. This in turn implies that
\[
(s^\alpha,t^\beta)=\big((s,(s,\tilde{u})^{\tau\pi_\Sigma})^{\tau\pi_\Gamma},(t,(t,\tilde{w})^{\tau\pi_\Sigma})^{\tau\pi_\Gamma}\big)\in A(\Gamma).
\]
However, $\Gamma$ is stable and hence admits no TF-automorphisms. Hence $\alpha=\beta$, as the claim asserts.
\end{proof}

Recall that $S(\Sigma)$ admits an automorphism $\delta$ that fixes some vertex, say $u$, in $U$ but not any vertex in $W$.
Since $U$ and $W$ are the vertex sets of the two connected components of $S(\Sigma)$ and since $\delta$ fixes $u\in U$, it follows that $\delta$ stabilizes each of $U$ and $W$. Let $\tau'=\tau(\mathrm{id},\delta)$. Then $\tau'$ is also an automorphism of $\Pi$ that is either a $\Gamma$-mixer or a $\Sigma$-mixer, and we derive from Claim~\ref{claim5} that
\begin{equation}\label{eq:TF1}
(v,(v,\tilde{u})^{\tau\pi_\Sigma\delta})^{\tau\pi_\Gamma}=(v,(v,\tilde{u})^{\tau'\pi_\Sigma})^{\tau'\pi_\Gamma}
=(v,(v,\tilde{w})^{\tau'\pi_\Sigma})^{\tau'\pi_\Gamma}=(v,(v,\tilde{w})^{\tau\pi_\Sigma\delta})^{\tau\pi_\Gamma}
\end{equation}
for each $v\in V$. If $|\sigma|=1$, then take
\[
v=(v_0,v_1,\ldots,v_{c-1})\in V(\Gamma_+)\times U\times\cdots\times U
\]
such that $(v,\tilde{u})^{\tau\pi_\Sigma}=v_a^{\phi_a}=u$. In this case, $(v,\tilde{u})^{\tau\pi_\Sigma\delta}=(v,\tilde{u})^{\tau\pi_\Sigma}$, and so
\begin{equation}\label{eq:TF2}
(v,(v,\tilde{u})^{\tau\pi_\Sigma\delta})^{\tau\pi_\Gamma}=(v,(v,\tilde{u})^{\tau\pi_\Sigma})^{\tau\pi_\Gamma},
\end{equation}
but as $(v,\tilde{w})^{\tau\pi_\Sigma}\in W$, we have $(v,\tilde{w})^{\tau\pi_\Sigma\delta}\neq(v,\tilde{w})^{\tau\pi_\Sigma}$ and hence by~(iv),
\begin{equation}\label{eq:TF3}
(v,(v,\tilde{w})^{\tau\pi_\Sigma\delta})^{\tau\pi_\Gamma}=(v,(v,\tilde{w})^{\tau\pi_\Sigma\delta})^{\psi\pi_\Gamma}
\neq(v,(v,\tilde{w})^{\tau\pi_\Sigma})^{\psi\pi_\Gamma}=(v,(v,\tilde{w})^{\tau\pi_\Sigma})^{\tau\pi_\Gamma}.
\end{equation}
Combining~\eqref{eq:TF1} with~\eqref{eq:TF2} and~\eqref{eq:TF3}, we obtain $(v,(v,\tilde{u})^{\tau\pi_\Sigma})^{\tau\pi_\Gamma}\neq(v,(v,\tilde{w})^{\tau\pi_\Sigma})^{\tau\pi_\Gamma}$, which contradicts Claim~\ref{claim5}.
If $|\sigma|=2$, then take
\[
v\in V(\Gamma_-)\times W\times\cdots\times W
\]
such that $(v,\tilde{w})^{\tau\pi_\Sigma}=u$, and we derive a contradiction in the same vein.

\section{Stability of graph pairs involving a cycle}\label{sec3}

\subsection{The gap in the proof of~\cite[Theorem~1.8]{GLX2025}}\label{sec4}

There is a gap in the published proof of~\cite[Theorem~1.8]{GLX2025} (namely, our Theorem~\ref{thm1}), which originates in the proof of~\cite[Lemma~5.4]{GLX2025}. Specifically, the inequality
\begin{equation}\label{eq:wrong}
|N_\Delta((y,k))\cap N_\Delta((z,\ell))|\leq|N_\Gamma(y)\cap N_\Gamma(z)|
\end{equation}
there (the eighth line on Page~162) is only valid when $k\neq\ell$, and may fail if $k=\ell$. In fact, by definition of direct product graph $\Delta=\Gamma\times C_m$,
\[
|N_\Delta((y,k))\cap N_\Delta((z,\ell))|=|N_\Gamma(y)\cap N_\Gamma(z)||N_{C_m}(k)\cap N_{C_m}(\ell)|,
\]
and when $k=\ell$, the left-hand side of~\eqref{eq:wrong} becomes $2|N_\Gamma(y)\cap N_\Gamma(z)|$, not satisfying~\eqref{eq:wrong} for $y$ and $z$ with common neighbors in $\Gamma$.

\subsection{Proof of Theorem~\ref{thm1}}

Let $n\geq3$ be odd, let $\Gamma$ be a connected twin-free bipartite graph with bipartition $\{U,W\}$, and let
\[
\Pi=\Gamma\times C_n.
\]
We use $\pi_{\Gamma}$ and $\pi_C$ to denote the projections of $V(\Pi)=V(\Gamma)\times V(C_n)$ to $V(\Gamma)$ and $V(C_n)$, respectively. It is easy to know (see for instance~\cite[Theorem~5.9]{HIK2011}) that $\Pi$ is connected and bipartite with a bipartition $\{X,Y\}$, where
\[
X:=U\times V(C_n)\ \text{ and }\ Y:=W\times V(C_n).
\]
By Lemma~\ref{le:lecsb}, the Cartesian skeleton $S(\Gamma)$ of $\Gamma$ has precisely two connected components with vertex sets $U$ and $W$ respectively, and
\[
S(\Pi)=S(\Gamma)\Bbox S(C_n)
\]
has precisely two connected components with vertex sets $X$ and $Y$ respectively.
Let $S_+(\Gamma)$ and $S_-(\Gamma)$ be the subgraphs of $S(\Gamma)$ induced by $U$ and $W$ respectively, and let $S_+(\Pi)$ and $S_-(\Pi)$ be the subgraphs of $S(\Pi)$ induced by $X$ and $Y$ respectively. Then
\begin{equation}\label{eq:S0S1}
S_+(\Gamma)=S_+(\Gamma)\Bbox S(C_n)\ \text{ and }\ S_-(\Gamma)=S_-(\Gamma)\Bbox S(C_n).
\end{equation}

To prove Theorem~\ref{thm1}, suppose for a contradiction that $(\Gamma,C_n)$ is unstable with $\Gamma$ and $C_n$ coprime. By Lemmas~\ref{le:legsm}, there exists $\tau\in\Aut(\Pi)$ that is either a $\Gamma$-mixer or a $C_n$-mixer.
By Lemma~\ref{claim1}, there exists $\gamma\in\Aut(S(\Gamma))$ with $|\gamma|\leq2$ such that $\tau(\gamma,\mathrm{id})\in\Aut(S(\Pi))$ stabilizes $X$, where $\id$ is the identity permutation on $V(C_n)$. Let
\[
\rho=\tau(\gamma,\mathrm{id}),\ \ \phi=\rho|_X\in\aut(S_+(\Pi))\ \text{ and }\ \psi=\rho|_Y\in\aut(S_-(\Pi)).
\]
Since $(\gamma,\mathrm{id})$ preserves both the $\Gamma$-partition and the $C_n$-partition, it follows that $\rho=\tau(\gamma,\mathrm{id})$ is either a $\Gamma$-mixer or a $C_n$-mixer.
Note from~\eqref{eq:S(Cn)} that $S(C_n)=B(C_n)=C_n$, and hence $S(C_n)$ is Cartesian-coprime. Note also that $C_n$ is stable as
\[
\Aut(C_n\times K_2)=\Aut(C_{2n})=D_{4n}=D_{2n}\times Z_2=\Aut(C_n)\times Z_2.
\]
Then by Proposition~\ref{lem:mixer}, the induced automorphism $\phi$ of $\rho$ on $S_+(\Pi)$ is a $U$-mixer, and the induced automorphism $\psi$ of $\rho$ on $S_-(\Pi)$ is a $W$-mixer. Thus, in view of~\eqref{eq:S0S1}, we conclude by Lemma~\ref{thm:cartesian} that
\begin{equation}\label{eq:S0S1decom}
S_+(\Gamma)=\Gamma_+\Box C_{n}^{\Box,c-1}\ \text{ and }\ S_-(\Gamma)=\Gamma_-\Box C_{n}^{\Box,d-1}
\end{equation}
for some integers $c,d\geq 2$ and graphs $\Gamma_+,\Gamma_-$ that are Cartesian-coprime to $C_n$, and that
\begin{align*}
\phi&=(\phi_0,(\phi_1,\ldots,\phi_c,g^{-1}))\in\Aut(\Gamma_+)\times\Aut(C_{n}^{\Box,c})=\Aut(\Gamma_+\Box C_{n}^{\Box,c})=\Aut(S_+(\Pi)),\\ \psi&=(\psi_0,(\psi_1,\ldots,\psi_d,h^{-1}))\in\Aut(\Gamma_-)\times\Aut(C_{n}^{\Box,d})=\Aut(\Gamma_-\Box C_{n}^{\Box,d})=\Aut(S_-(\Pi))
\end{align*}
with $(\phi_1,\ldots,\phi_c,g^{-1})\in\Aut(C_n)\wr S_c$ and $(\psi_1,\ldots,\psi_d,h^{-1})\in\Aut(C_n)\wr S_d$ satisfying the following conditions:
\begin{enumerate}[(i)]
\item\label{condition1} $(\phi_1,\ldots,\phi_c)\in\Aut(C_n)^c$ and $g\in S_c$ such that $c^g\in\{1,\ldots,c-1\}$;
\item\label{condition2} each $(x_0,x_1,\ldots,x_c)\in X=V(S_+(\Pi))$ is mapped by $\phi$ to $(x_0^{\phi_{0}},x_{1^g}^{\phi_{1^g}},\ldots,x_{c^g}^{\phi_{c^g}})$;
\item\label{condition3} $(\psi_1,\ldots,\psi_d)\in\Aut(C_n)^d$ and $h\in S_d$ such that $d^h\in\{1,\ldots,d-1\}$;
\item\label{condition4} each $(y_0,y_1,\ldots,y_d)\in Y=V(S_-(\Pi))$ is mapped by $\psi$ to $(y_0^{\psi_{0}},y_{1^h}^{\psi_{1^h}},\ldots,y_{d^h}^{\psi_{d^h}})$.
\end{enumerate}
Let $a=c^g\in\{1,\ldots,c-1\}$ and $b=d^h\in\{1,\ldots,d-1\}$. Note from~\eqref{eq:S0S1decom} that each $u\in U$ has the form
\[
u=(u_0,u_1,\ldots,u_{c-1})\in V(\Gamma_+)\times V(C_n)^{c-1}=U
\]
and each $w\in W$ has the form
\[
w=(w_0,w_1,\ldots,w_{d-1})\in V(\Gamma_-)\times V(C_n)^{d-1}=W.
\]
Then conditions~\eqref{condition2} and~\eqref{condition4} yield
\begin{align}
\label{eq:u_a}u_a^{\phi_a}&=u_{c^g}^{\phi_{c^g}}=(u,0)^{\phi\pi_C}=(u,0)^{\rho\pi_C}=(u,0)^{\tau\pi_C},\\
\label{eq:w_b}w_b^{\psi_b}&=w_{d^h}^{\psi_{d^h}}=(w,1)^{\psi\pi_C}=(w,1)^{\rho\pi_C}=(w,1)^{\tau\pi_C}.
\end{align}

For $i\in\{1,\ldots,n\}$, let
\[
L_i=\{u\in U:u_a=i\}\cup\{w\in W:w_b^{\psi_b}=i^{\phi_a}\},
\]
and for $u\in U$ and $w\in W$, let
\begin{align*}
P_u&=\{u_0\}\times\{u_1\}\times\cdots\times\{u_{a-1}\}\times V(C_n)\times \{u_{a+1}\}\times\cdots\times\{u_{c-1}\}\subseteq U,\\
Q_w&=\{w_0\}\times\{w_1\}\times\cdots\times\{w_{b-1}\}\times V(C_n)\times \{w_{b+1}\}\times\cdots\times\{w_{d-1}\}\subseteq W.
\end{align*}
Clearly, $L_1,\ldots,L_n$ are pairwise disjoint. Let $\mathcal{P}=\{P_u:u\in U\}\cup \{Q_w:w\in W\}$. Then $\mathcal{P}$ is a partition of $U\cup W=V(\Gamma)$ (note that different choices of $u$ or $w$ may give the same set $P_u$ or $Q_w$ respectively). The definition of $L_i$, $P_u$ and $Q_w$ implies that~\eqref{it:itdca} of Lemma~\ref{le:ledc} holds. Then we conclude from the following three claims that $L_1,\ldots,L_n$ and $\mathcal{P}$ satisfy the conditions in Lemma~\ref{le:ledc}, and so $\Gamma$ is not coprime to $C_n$, which is a contradiction to complete the proof.

\begin{claim}\label{claim2}
$\{L_1,\ldots,L_n\}$ is a partition of $V(\Gamma)$ into independents sets in $\Gamma$.
\end{claim}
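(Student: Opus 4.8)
The plan is to establish the two assertions of the claim separately: first that the sets $L_i$ cover $V(\Gamma)$ (so that, together with the pairwise disjointness already noted, they form a partition), and then that each $L_i$ is an independent set. The covering is routine bookkeeping with the coordinate descriptions of the vertices of $U$ and $W$, whereas the independence is where the hypothesis that $\tau\in\Aut(\Pi)$ enters, and is the substantive step.

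For the covering I would argue directly from the definition of $L_i$. Each $u\in U$ has a well-defined coordinate $u_a\in V(C_n)$, so $u\in L_{u_a}$. Each $w\in W$ has $w_b^{\psi_b}\in V(C_n)$, and since $\phi_a\in\Aut(C_n)$ is a permutation of $V(C_n)$, there is a unique index $i$ with $i^{\phi_a}=w_b^{\psi_b}$, namely $i=(w_b^{\psi_b})^{\phi_a^{-1}}$; hence $w\in L_i$. Thus every vertex of $V(\Gamma)=U\cup W$ lies in some $L_i$, and together with the pairwise disjointness this shows that $\{L_1,\dots,L_n\}$ is a partition of $V(\Gamma)$.

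For the independence, which is the key step, recall that $\Gamma$ is bipartite with parts $U$ and $W$, which are therefore independent (in particular $\Gamma$ has no loops, so this holds genuinely). Consequently any edge of $\Gamma$ contained in a single $L_i$ would join some $u\in L_i\cap U$ to some $w\in L_i\cap W$, and it suffices to rule out that an edge $\{u,w\}$ of $\Gamma$ with $u\in U$ and $w\in W$ has both endpoints in the same block. Given such an edge, the crucial observation is that the vertices $0$ (that is, $n$) and $1$ of $C_n$ are adjacent, so that $\{(u,0),(w,1)\}$ is an edge of $\Pi=\Gamma\times C_n$. Since $\tau$ is an automorphism of $\Pi$, the pair $\{(u,0)^\tau,(w,1)^\tau\}$ is again an edge of $\Pi$; projecting to $V(C_n)$, the vertices $(u,0)^{\tau\pi_C}$ and $(w,1)^{\tau\pi_C}$ are adjacent in $C_n$ and hence distinct (as $n\geq3$). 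By \eqref{eq:u_a} and \eqref{eq:w_b} these projections equal $u_a^{\phi_a}$ and $w_b^{\psi_b}$ respectively, whence $u_a^{\phi_a}\neq w_b^{\psi_b}$. Writing $u\in L_{u_a}$ and $w\in L_j$ with $j^{\phi_a}=w_b^{\psi_b}$, the injectivity of $\phi_a$ forces $j\neq u_a$, so $u$ and $w$ lie in distinct blocks; therefore each $L_i$ is independent.

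The only nonroutine point is the translation in the previous paragraph: an arbitrary edge $\{u,w\}$ of $\Gamma$ must be lifted to \emph{precisely} the edge $\{(u,0),(w,1)\}$ of $\Pi$ whose two $C_n$-coordinates are the anchors $0$ and $1$ that were used to define the $L_i$ through \eqref{eq:u_a} and \eqref{eq:w_b}; a different lift would not connect the defining data of $L_i$ on the $U$-side with that on the $W$-side. Everything else follows from $\phi_a$ and $\psi_b$ being permutations of $V(C_n)$ and from $\tau$ preserving adjacency. I would also record explicitly that a bipartite graph carries no loops, so that $U$ and $W$ are genuinely independent and the reduction to cross edges is justified; this makes the independence of each $L_i$ equivalent to the inequality $u_a^{\phi_a}\neq w_b^{\psi_b}$ established above.
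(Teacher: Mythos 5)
Your proposal is correct and follows essentially the same route as the paper's proof: the covering argument uses the fact that $\phi_a$ permutes $V(C_n)$ exactly as the paper does (just phrased element-wise rather than as a union computation), and the independence argument is the paper's argument in contrapositive form --- lift a cross edge $\{u,w\}$ to the edge $\{(u,0),(w,1)\}$ of $\Pi$, apply $\tau$, project to $C_n$, and invoke \eqref{eq:u_a} and \eqref{eq:w_b} to get $u_a^{\phi_a}\neq w_b^{\psi_b}$. The only cosmetic difference is that the paper argues by contradiction (and contains a small typo, ``$u\in W$'' where it means ``$u\in U$'', which your write-up avoids).
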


\begin{proof}
To see that $\{L_1,\ldots,L_n\}$ is a partition of $V(\Gamma)$, we only need to notice that
\begin{align*}
\bigcup_{i\in V(C_n)}L_i&=\bigcup_{i\in V(C_n)}\left(\{u\in U:u_a=i\}\cup\{w\in W:w_b^{\psi_b}=i^{\phi_a}\} \right)\\
&=\Big(\bigcup_{i\in V(C_n)}\{u\in U:u_a=i\}\Big)\cup\Big(\bigcup_{i\in V(C_n)}\{w\in W:w_b^{\psi_b}=i^{\phi_a}\}\Big)\\
&=\Big(\bigcup_{i\in V(C_n)}\{u\in U:u_a=i\}\Big)\cup\Big(\bigcup_{j\in V(C_n)}\{w\in W:w_b^{\psi_b}=j\}\Big)\\
&=\,U\cup W.
\end{align*}
Suppose that $L_i$ is not an independent set for some $i\in\{1,\ldots,n\}$. Then, since $\Gamma$ is bipartite with parts $U$ and $W$, there exist $u\in W$ with $u_a=i$ and $w\in W$ with $w_b^{\psi_b}=i^{\phi_a}$ such that $u$ is adjacent to $w$ in $\Gamma$.
Therefore, $(u,0)$ is adjacent to $(w,1)$ in $\Pi$, and so $(u,0)^\tau$ is adjacent to $(w,1)^\tau$ in $\Pi$, which implies that $(u,0)^{\tau\pi_C}\neq(w,1)^{\tau\pi_C}$. Then according to~\eqref{eq:u_a} and~\eqref{eq:w_b}, $u_a^{\phi_a}\neq w_b^{\psi_b}$. However, we deduce from $u_a=i$ and $w_b^{\psi_b}=i^{\phi_a}$ that $u_a^{\phi_a}=w_b^{\psi_b}$, a contradiction.
\end{proof}

\begin{claim}\label{claim3}
\eqref{it:itdcb} of Lemma~\ref{le:ledc} holds.
\end{claim}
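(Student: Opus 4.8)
The plan is to verify condition~\eqref{it:itdcb} of Lemma~\ref{le:ledc} by transporting every adjacency through $\tau$ into $\Pi=\Gamma\times C_n$, where adjacency splits into a $\Gamma$-coordinate condition and a $C_n$-coordinate condition. Since $\Gamma$ is bipartite with parts $U$ and $W$, I would first dispose of the degenerate cases: two members of $\mathcal{P}$ of the same type (both of the form $P_u\subseteq U$, or both of the form $Q_w\subseteq W$) span no edge of $\Gamma$, so the hypothesis of~\eqref{it:itdcb} is vacuous. Thus I may assume the two members are some $P_u\subseteq U$ and some $Q_w\subseteq W$ (the case with the types interchanged being symmetric), and I must produce an edge from $P_u\cap L_i$ to $Q_w\cap L_j$. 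By~\eqref{it:itdca} these two sets are singletons: $P_u\cap L_i=\{\hat u\}$ with $\hat u_a=i$ and $\hat u$ agreeing with $u$ off the $a$-th coordinate, and $Q_w\cap L_j=\{\hat w\}$ with $\hat w_b^{\psi_b}=j^{\phi_a}$ and $\hat w$ agreeing with $w$ off the $b$-th coordinate. The goal is to show $\hat u$ is adjacent to $\hat w$.

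The decisive point is that the $\Gamma$-coordinate of the $\tau$-image is constant along each block $P_u$ and each block $Q_w$. Since $a=c^g$ we have $a^{g^{-1}}=c$, so in condition~(ii) the source coordinate $a$ of a vertex $(u',0)\in X$ is carried to the final (cycle) coordinate of $(u',0)^\phi$ and hence does not occur in $(u',0)^{\phi\pi_\Gamma}$; as $\tau=\rho(\gamma^{-1},\mathrm{id})$ with $\gamma^{-1}$ a fixed permutation of $V(\Gamma)$, we get $(u',0)^{\tau\pi_\Gamma}=\bigl((u',0)^{\phi\pi_\Gamma}\bigr)^{\gamma^{-1}}$, which likewise depends only on the coordinates of $u'$ other than the $a$-th. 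As the vertices of $P_u$ agree off the $a$-th coordinate, $(u',0)^{\tau\pi_\Gamma}$ takes a common value $\xi\in V(\Gamma)$ for all $u'\in P_u$; the symmetric argument using $b=d^h$ and condition~(iv) yields a common value $\eta$ of $(w',1)^{\tau\pi_\Gamma}$ for all $w'\in Q_w$. By contrast,~\eqref{eq:u_a} and~\eqref{eq:w_b} show the cycle coordinate does move: $(\hat u,0)^{\tau\pi_C}=\hat u_a^{\phi_a}=i^{\phi_a}$ and $(\hat w,1)^{\tau\pi_C}=\hat w_b^{\psi_b}=j^{\phi_a}$.

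Next I would read the two hypotheses through $\tau$. The edge from $P_u$ to $Q_w$ gives $u^*\in P_u$ and $w^*\in Q_w$ with $(u^*,0)$ adjacent to $(w^*,1)$ in $\Pi$; applying $\tau$ and projecting by $\pi_\Gamma$ shows $\xi$ is adjacent to $\eta$ in $\Gamma$. The edge from $L_i$ to $L_j$ gives adjacent vertices, one in $L_i$ and one in $L_j$, lying in opposite parts of $\Gamma$; applying $\tau$ and projecting by $\pi_C$ through~\eqref{eq:u_a} and~\eqref{eq:w_b} shows $i^{\phi_a}$ is adjacent to $j^{\phi_a}$ in $C_n$, exactly as in the proof of Claim~\ref{claim2}. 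Consequently, in $\Pi=\Gamma\times C_n$ the vertex $(\hat u,0)^\tau=(\xi,i^{\phi_a})$ is adjacent to $(\hat w,1)^\tau=(\eta,j^{\phi_a})$, since their $\Gamma$-coordinates and their $C_n$-coordinates are respectively adjacent. Pulling back through $\tau^{-1}$ yields $(\hat u,0)$ adjacent to $(\hat w,1)$ in $\Pi$, whence $\hat u$ is adjacent to $\hat w$ in $\Gamma$; this is the desired edge from $P_u\cap L_i$ to $Q_w\cap L_j$.

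The step I expect to be the crux, and would write out most carefully, is the constancy of the $\Gamma$-coordinate along $P_u$ and $Q_w$ in the second paragraph: it rests entirely on the bookkeeping of $g$ and $h$ and on the facts recorded in~(i) and~(iii) that the moving coordinates $a=c^g$ and $b=d^h$ are precisely those sent to the cycle factor arising from the direct product. Once this constancy is established, the rest is a routine use of the product form of adjacency in $\Pi$ together with $\tau$ being an automorphism.
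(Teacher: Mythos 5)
Your proposal is correct and takes essentially the same route as the paper's own proof: both identify the unique vertices of $P_u\cap L_i$ and $Q_w\cap L_j$, use the fact that the moving coordinate $a=c^g$ (resp.\ $b=d^h$) is sent to the cycle factor---so that the $\Gamma$-component of the $\tau$-image is constant along $P_u$ and along $Q_w$---then read the edge from $P_u$ to $Q_w$ through $\tau\pi_\Gamma$ and the edge from $L_i$ to $L_j$ through $\tau\pi_C$ (via \eqref{eq:u_a} and \eqref{eq:w_b}), and finally pull the resulting adjacency in $\Pi$ back through $\tau$. The only differences are presentational: you spell out the vacuous same-part case and the coordinate bookkeeping that the paper compresses into ``condition~(ii) implies $(\tilde{u},0)^{\phi\pi_\Gamma}=(u',0)^{\phi\pi_\Gamma}$''.
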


\begin{proof}
Note that $\Gamma$ is bipartite with parts $U$ and $W$. Let $u\in U$, $w\in W$ and $i,j\in V(C_n)$ such that there exist edges of $\Gamma$ from $P_u$ to $Q_w$ and $L_i$ to $L_j$. We show that there exists an edge of $\Gamma$ from $P_u\cap L_i$ to $Q_w\cap L_j$. More precisely, letting
\begin{align*}
\tilde{u}&=(u_0,u_1,\ldots,u_{a-1},i,u_{a+1},\ldots,u_{c-1}),\\
\tilde{w}&=(w_0,w_1,\ldots,w_{b-1},j^{\phi_a\psi_b^{-1}},w_{b+1},\ldots,w_{d-1}),
\end{align*}
the definition of $L_i$, $P_u$ and $Q_w$ gives $\tilde{u}\in P_u\cap L_i$ and $\tilde{w}\in Q_w\cap L_j$, and we prove in the following that $\{\tilde{u},\tilde{w}\}$ is an edge of $\Gamma$.

As there is an edge, say $\{u',w'\}$, from $P_u$ to $Q_w$, where
\begin{align*}
u'&=(u_0,u_1,\ldots,u_{a-1},u'_a,u_{a+1},\ldots,u_{c-1}),\\
w'&=(w_0,w_1,\ldots,w_{b-1},w'_b,w_{b+1},\ldots,w_{d-1}),
\end{align*}
it follows that $\{(u',0)^\tau,(w',1)^\tau\}$ is an edge of $\Pi$. In particular, $\{(u',0)^{\tau\pi_\Gamma},(w',1)^{\tau\pi_\Gamma}\}$ is an edge of $\Gamma$. Since condition~\eqref{condition2} implies $(\tilde{u},0)^{\phi\pi_\Gamma}=(u',0)^{\phi\pi_\Gamma}$, we have
\[
(\tilde{u},0)^{\tau\pi_\Gamma}=(\tilde{u},0)^{\phi(\gamma,\mathrm{id})\pi_\Gamma}=(u',0)^{\phi(\gamma,\mathrm{id})\pi_\Gamma}
=(u',0)^{\tau\pi_\Gamma}.
\]
Similarly, $(\tilde{w},1)^{\tau\pi_\Gamma}=(w',1)^{\tau\pi_\Gamma}$. Therefore, $\{(\tilde{u},0)^{\tau\pi_\Gamma},(\tilde{w},1)^{\tau\pi_\Gamma}\}$ is an edge of $\Gamma$.

As there is an edge, say $\{u'',w''\}$, from $L_i$ to $L_j$, where
\begin{align*}
u''&=(u_0'',u_1'',\ldots,u_{a-1}'',i,u_{a+1}'',\ldots,u_{c-1}''),\\
w''&=(w_0'',w_1'',\ldots,w_{b-1}'',j^{\phi_a\psi_b^{-1}},w_{b+1}'',\ldots,w_{d-1}''),
\end{align*}
it follows that $\{(u'',0)^\tau,(w'',1)^\tau\}$ is an edge of $\Pi$, and so $\{(u'',0)^{\tau\pi_C},(w'',1)^{\tau\pi_C}\}$ is an edge of $C_n$. From~\eqref{eq:u_a} and~\eqref{eq:w_b} we deduce $(\tilde{u},0)^{\tau\pi_C}=(u'',0)^{\tau\pi_C}$ and $(\tilde{w},1)^{\tau\pi_C}=(w'',1)^{\tau\pi_C}$. Hence $\{(\tilde{u},0)^{\tau\pi_C},(\tilde{w},1)^{\tau\pi_C}\}$ is an edge of $C_n$. This together with the conclusion $\{(\tilde{u},0)^{\tau\pi_\Gamma},(\tilde{w},1)^{\tau\pi_\Gamma}\}\in E(\Gamma)$ yields $\{(\tilde{u},0)^\tau,(\tilde{w},1)^\tau\}\in E(\Pi)$. Consequently, $\{(\tilde{u},0),(\tilde{w},1)\}$ is an edge of $\Pi$, and so $\{\tilde{u},\tilde{w}\}$ is an edge of $\Gamma$, as required.
\end{proof}

\begin{claim}\label{claim4}
\eqref{it:itdcc} of Lemma~\ref{le:ledc} holds.
\end{claim}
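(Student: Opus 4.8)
The plan is to verify condition~\eqref{it:itdcc} of Lemma~\ref{le:ledc} for every ordered pair of parts in $\mathcal{P}=\{P_u:u\in U\}\cup\{Q_w:w\in W\}$. Since $\Gamma$ is bipartite with parts $U$ and $W$, there are no edges inside $U$ or inside $W$; hence for two parts lying in the same partite set the count $|N_\Gamma(x)\cap P_v|$ is identically $0$, and the only substantive case is a pair $P_u\subseteq U$, $Q_w\subseteq W$ (the reversed pair being handled by the symmetric argument). So I fix such $P_u$ and $Q_w$.

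For $x\in P_u$ and $y\in Q_w$ I would transport the adjacency through $\tau$. As $\{0,1\}\in E(C_n)$, we have $\{x,y\}\in E(\Gamma)$ if and only if $\{(x,0),(y,1)\}\in E(\Pi)$, which (since $\tau\in\Aut(\Pi)$) is equivalent to $\{(x,0)^\tau,(y,1)^\tau\}\in E(\Pi)$, i.e.\ to the conjunction
\[
\{(x,0)^{\tau\pi_\Gamma},(y,1)^{\tau\pi_\Gamma}\}\in E(\Gamma)\quad\text{and}\quad\{(x,0)^{\tau\pi_C},(y,1)^{\tau\pi_C}\}\in E(C_n).
\]
The crux is to show that the $\Gamma$-projection is blind to which vertex of $P_u$ (resp.\ $Q_w$) we chose. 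Writing $x=(x_0,\dots,x_{c-1})$ with free coordinate $x_a$, condition~\eqref{condition2} sends $x_a$ to position $c$ of the image (because $c^g=a$), so $x_a$ contributes only to $(x,0)^{\phi\pi_C}=x_a^{\phi_a}$ and not to $(x,0)^{\phi\pi_\Gamma}$; as every other coordinate of $x$ is fixed over $P_u$, the value $p:=(x,0)^{\tau\pi_\Gamma}=((x,0)^{\phi\pi_\Gamma})^{\gamma^{-1}}$ is constant on $P_u$. By condition~\eqref{condition4} the same reasoning yields a constant $q:=(y,1)^{\tau\pi_\Gamma}$ on $Q_w$.

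With $p,q$ fixed, the equivalence reduces to
\[
\{x,y\}\in E(\Gamma)\iff\{p,q\}\in E(\Gamma)\ \text{ and }\ \{x_a^{\phi_a},y_b^{\psi_b}\}\in E(C_n),
\]
where I have used $(x,0)^{\tau\pi_C}=x_a^{\phi_a}$ and $(y,1)^{\tau\pi_C}=y_b^{\psi_b}$, obtained exactly as in~\eqref{eq:u_a} and~\eqref{eq:w_b}. If $\{p,q\}\notin E(\Gamma)$, then no vertex of $P_u$ has a neighbour in $Q_w$, so $d_{u,w}=0$. If $\{p,q\}\in E(\Gamma)$, then for a fixed $x\in P_u$ the candidate neighbours $y\in Q_w$ are parametrised bijectively by $s=y_b\in V(C_n)$ (only the $b$-th coordinate varies), and such a $y$ is adjacent to $x$ precisely when $s^{\psi_b}$ is adjacent to the fixed vertex $x_a^{\phi_a}$ in $C_n$; since $\psi_b\in\Aut(C_n)$ and $C_n$ is $2$-regular, there are exactly two such $s$, whence $|N_\Gamma(x)\cap Q_w|=2$ independently of $x$. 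Thus $d_{u,w}\in\{0,2\}$ is well defined, as required.

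The main obstacle is the constancy of $p$ and $q$ in the second paragraph: it is exactly here that the hypotheses $a=c^g\in\{1,\dots,c-1\}$ and $b=d^h\in\{1,\dots,d-1\}$ are used, ensuring the free Cartesian coordinate of $P_u$ (resp.\ $Q_w$) is permuted by $\phi$ (resp.\ $\psi$) into the external $C_n$-factor and therefore disappears under $\pi_\Gamma$. Once this is in hand, the $2$-regularity of $C_n$ delivers the admissible values $0$ and $2$ automatically, and the symmetric computation (counting neighbours of a fixed $y\in Q_w$ inside $P_u$) disposes of the reversed pair.
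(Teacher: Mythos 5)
Your proof is correct and takes essentially the same approach as the paper's: transport the adjacency through $\tau$, use conditions (ii) and (iv) to see that the free coordinate of $P_u$ (resp.\ $Q_w$) lands in the $C_n$-coordinate of the image so that the $\Gamma$-projections of the images are constant on each part, and then count neighbours via the $2$-regularity of $C_n$. The only differences are organizational: you spell out the trivial same-partite-set case and split on whether $\{p,q\}\in E(\Gamma)$, whereas the paper assumes an edge between $P_u$ and $Q_w$ and cites the second paragraph of the proof of Claim~\ref{claim3} for the constancy of the $\Gamma$-projections.
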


\begin{proof}
Suppose that there exists an edge, say $\{u',w'\}$, from $P_u$ to $Q_w$, where
\begin{align*}
u'&=(u_0,u_1,\ldots,u_{a-1},u'_a,u_{a+1},\ldots,u_{c-1}),\\
w'&=(w_0,w_1,\ldots,w_{b-1},w'_b,w_{b+1},\ldots,w_{d-1}).
\end{align*}
We show that each $\tilde{u}\in P_u$ and $\tilde{w}\in Q_w$ satisfy $|N_\Gamma(\tilde{u})\cap Q_w|=|N_\Gamma(\tilde{w})\cap P_u|=2$.
By the second paragraph in the proof of Claim~\ref{claim3},
\[
(\tilde{u},0)^{\tau\pi_\Gamma}=(u',0)^{\tau\pi_\Gamma}\ \text{ and }\ (\tilde{w},1)^{\tau\pi_\Gamma}=(w',1)^{\tau\pi_\Gamma}
\]
with $\{(\tilde{u},0)^{\tau\pi_\Gamma},(\tilde{w},1)^{\tau\pi_\Gamma}\}\in E(\Gamma)$. According to~\eqref{eq:u_a} and~\eqref{eq:w_b}, we have $(\tilde{u},0)^{\tau\pi_C}=\tilde{u}_a^{\phi_a}$ and $(\tilde{w},1)^{\tau\pi_C}=\tilde{w}_b^{\psi_b}$. Thus, for each $\tilde{u}\in P_u$,
\[
\left|\big\{\tilde{w}\in Q_w:\{(\tilde{u},0)^\tau,(\tilde{w},1)^\tau\}\in E(\Pi)\big\}\right|
=\left|\big\{\tilde{w}_b\in V(C_n):\{\tilde{u}_a^{\phi_a},\tilde{w}_b^{\psi_b}\}\in E(C_n)\big\}\right|=2,
\]
which implies that
\[
|N_\Gamma(\tilde{u})\cap Q_w|=|N_\Pi((\tilde{u},0))\cap (Q_w\times\{1\})|=|N_\Pi((\tilde{u},0)^\tau)\cap(Q_w\times\{1\})^\tau|=2.
\]
Similarly, $|N_\Gamma(\tilde{w})\cap P_u|=2$ for each $\tilde{w}\in Q_w$. This completes the proof.
\end{proof}

\subsection{Proof of Theorem~$\ref{le:lepco}$}

Since $n$ is even, $C_n$ is bipartite with bipartition $\{U,W\}$, where
\[
U:=\{i\in V(C_n):i\text{ is even}\}\ \text{ and }\ W:=\{i\in V(C_n):i\text{ is odd}\}.
\]
If $\Gamma$ is nontrivially unstable and coprime to $C_n$, then $(\Gamma,C_n)$ is a nontrivial pair, and Theorem~\ref{thm3}\,\eqref{thm3a} implies that $(\Gamma,C_n)$ is unstable. Conversely, suppose that $(\Gamma,C_n)$ is nontrivially unstable. Then $\Gamma$ is connected, bipartite, twin-free, and coprime to $C_n$. To complete the proof, suppose for a contradiction that $\Gamma$ is unstable.

First let $n$ be a multiple of $4$. Note that $\delta\colon i\mapsto-i$ is an automorphism of $C_n$ with fixed point set $\{0,n/2\}$. In particular, $\delta$ is an automorphism of $S(C_n)$ that fixes some vertex in $U$ but not any in $W$. Moreover, note from~\eqref{eq:S(Cn)} that each connected component of $S(C_n)$ is a cycle $C_{n/2}$. Then since $C_{n/2}$ is Cartesian-prime, we conclude from Theorem~\ref{thm3}\,\eqref{thm3c} that $(\Gamma,C_n)$ is stable, a contradiction.

Now let $n=2m$ with $m$ odd. Since $\Gamma$ is connected, non-bipartite and twin-free, $\Gamma\times K_2$ is connected and twin-free.
Since $m$ is odd, we have
\[
C_n\cong K_2\times C_m,\ \ D_{2n}\cong Z_2\times D_{2m}
\]
and that $C_m$ has no decomposition $C_m\cong\Sigma\times\Delta$ with $\Delta$ of order greater than $1$. Thus, $\Gamma\times K_2$ and $C_m$ are coprime, and so Theorem~\ref{thm1} yields
\begin{align*}
\aut(\Gamma\times K_2\times C_m)&\cong\aut(\Gamma\times K_2)\times\aut(C_m)\\
&\cong\aut(\Gamma)\times\aut(K_2)\times\aut(C_m)\quad(\text{by the stability of $\Gamma$})\\
&\cong\aut(\Gamma)\times C_2\times D_{2m}\\
&\cong\aut(\Gamma)\times D_{2n}\\
&\cong\aut(\Gamma)\times\aut(C_n)\\
&\not\cong\aut(\Gamma\times C_n)\quad(\text{by the instability of $(\Gamma, C_n)$})\\
&=\aut(\Gamma\times K_2\times C_m),
\end{align*}
again a contradiction.

\subsection{Proof of Theorem~$\ref{thm2}$}

By virtue of Theorem~\ref{thm1}, we only need to prove the ``only if'' part. This can be done by constructing an unstable graph $\Gamma$ such that $(\Gamma,C_n)$ is nontrivial, where $n$ is even, and then applying Theorem~\ref{thm3}\,\eqref{thm3a}. Here, however, we present a direct proof that avoids invoking Theorem~\ref{thm3}, as it is no more complicated than the indirect approach.

Suppose that $n$ is even. Since $n\neq4$, the graph $C_n$ is twin-free. Let $\Gamma=\mathrm{Cay}(Z_8,\{\pm1,\pm2\})$. Clearly, $\Gamma$ is connected, non-bipartite and twin-free. Since the valency of $\Gamma$ is greater than that of $C_n$ and the only divisors of $|V(\Gamma)|=8$ is $1$, $2$, $4$ and $8$, it is also easy to see that $\Gamma$ is coprime to $C_n$. Consider the permutation $\tau$ on $V(\Gamma\times C_n)=Z_8\times Z_n$ defined by
\[
(u,i)^\tau=(5u+4i,i),
\]
where note that $5u+4i$ is well defined as $4n$ is divisible by $8$. Since
\begin{align*}
&\{(u,i)^\tau,(v,j)^\tau\}\in E(\Gamma\times C_n)\\
\Leftrightarrow\ &\{(5u+4i,i),(5v+4j,j)\}\in E(\Gamma\times C_n)\\
\Leftrightarrow\ &(5u+4i)-(5v+4j)\in\{\pm1,\pm2\}\text{ and }i-j\in\{\pm1\}\\
\Leftrightarrow\ &5(u-v)+4\in\{\pm1,\pm2\}\text{ and }i-j\in\{\pm1\}\\
\Leftrightarrow\ &u-v\in\{\pm1,\pm2\}\text{ and }i-j\in\{\pm1\}\\
\Leftrightarrow\ &\{(u,i),(v,j)\}\in E(\Gamma\times C_n),
\end{align*}
$\tau\in\Aut(\Gamma\times C_n)$. Moreover, since $\tau$ maps $(0,0)$ and $(0,1)$ to $(0,0)$ and $(4,1)$ respectively,   $\tau\notin\Aut(\Gamma)\times\Aut(C_n)$. Hence $(\Gamma,C_n)$ is nontrivially unstable, proving the ``only if'' part.

\section*{Acknowledgments}

Wang was supported by the National Natural Science Foundation of China (12401453), the China Postdoctoral Science Foundation (2024M751251) and the Postdoctoral Fellowship Program of CPSF (GZC20240626). Qin was supported by the National Natural Science Foundation of China (12101421).

\end{document}